\documentclass[11pt]{article}
\usepackage[utf8]{inputenc}
\usepackage{soul,enumitem,comment,subcaption}
\usepackage{pgfplots,tikz}
\usepackage{kbordermatrix}
\renewcommand{\kbldelim}{(}
\renewcommand{\kbrdelim}{)}
\usepackage[normalem]{ulem}

\newcommand{\required}[1]{\section*{\hfil \sharsokol2013advanced\hfil}}

\newcommand{\supp}{\operatorname{supp}}

\newcommand{\wf}{\operatorname{WF}}
\usepackage{verbatim,url}
\usepackage{graphicx, epsfig}
\graphicspath{{helical_figs/}}
\usepackage{bbm}
\usepackage{amssymb,amsmath,amsfonts,amsthm}
\usepackage{upref}
\newcommand{\be}{\begin{equation}}
\newcommand{\ee}{\end{equation}}
\newcommand{\bs}{\begin{split}}
\newcommand{\beq}{\begin{equation*}}
\newcommand{\eeq}{\end{equation*}}
\newcommand{\bal}{\begin{align}}
\newcommand{\eal}{\end{align}}

\newcommand{\s}{\mathcal S}
\newcommand{\q}{\nu}
\newcommand{\ed}{\tau}

\newcommand{\cC}{\mathcal C}
\newcommand{\WF}{\mathrm{WF}}  

\newcommand{\CL}{\mathcal L}
\newcommand{\CP}{\mathcal P}
\newcommand{\CR}{\mathcal R}

\newcommand{\br}{\mathbb{R}}

\newcommand{\pa}{\partial}
\newcommand{\bt}{\beta}
\newcommand{\al}{\alpha}
\newcommand{\la}{\lambda}

\newcommand{\ioi}{\int_0^{\infty}}

\newcommand{\dd}{\mathrm{d}}

\newcommand{\ga}{\gamma}  
\newcommand{\ts}{\tilde s}

\def\red#1{\textcolor[rgb]{1, 0, 0}{#1} }

\newcommand{\e}{\epsilon}
\newcommand{\de}{\delta}

\numberwithin{equation}{section}

\newcommand{\bel}[1]{\be\label{#1}}

\newcommand{\smo}{\setminus \mathbf{0}}

\newcommand{\norm}[1]{\left\lVert#1\right\rVert}      
\newcommand{\abs}[1]{\left|#1\right|}                 
\newcommand{\paren}[1]{\left(#1\right)}               
\newcommand{\sparen}[1]{\left\{#1\right\}}      

\newcommand{\vy}{\mathbf{y}}
\newcommand{\vx}{\mathbf{x}}
\newcommand{\vv}{\mathbf{v}}
\newcommand{\vz}{\mathbf{z}}
\newcommand{\vu}{\mathbf{u}}

\newcommand{\Cc}{\mathcal{C}}

\newcommand{\Dc}{\mathcal{D}}
\newcommand{\Ec}{\mathcal{E}}
\newcommand{\Fc}{\mathcal{F}}
\newcommand{\Sc}{\mathcal{S}}
\newcommand{\vsig}{{\boldsymbol{\sigma}}}

\newcommand{\rr}{{{\mathbb R}}}

\newcommand{\rn}{{{\mathbb R}^n}}

\newtheorem{theorem}{Theorem}[section]
\newtheorem{lemma}[theorem]{Lemma}
\newtheorem{proposition}[theorem]{Proposition}
\newtheorem{definition}[theorem]{Definition}
\newtheorem{corollary}[theorem]{Corollary}
\newtheorem{assumption}[theorem]{Assumption}

\newcommand{\partyf}[2]{\frac{\partial #2}{\partial y_{#1}}}

\theoremstyle{definition}

\newtheorem{remark}{Remark}

\newcommand{\tblu}{\textcolor{blue}}

\usepackage{hyperref}
\hypersetup{
    colorlinks=true,
    linkcolor=blue,
    filecolor=magenta,      
    urlcolor=cyan,
}

\usepackage{xcolor}

\usepackage{authblk} 

\title{Microlocal analysis of non-linear artifacts in cone beam CT}
\author[$\dagger$]{James W. Webber} 
\author[$*$]{Alexander Katsevich}

\affil[$\dagger$]{\small (Corresponding author) Cleveland Clinic Lerner College of Medicine of Case Western Reserve University School of Medicine, USA\par webberj5@ccf.org}

\affil[$*$]{\small School of Data, Mathematical, and Statistical Sciences, University of Central Florida, USA\par alexander.katsevich@ucf.edu}
\date{}
\begin{document}
\maketitle

\begin{abstract} 
We present a novel microlocal analysis of beam hardening artifacts arising in cone-beam X-ray CT, where the set of X-ray sources is restricted to a 1D smooth curve $ \gamma\subset \mathbb{R}^3$. 
We show that, when the CT data is modeled in the standard way using the Beer Lambert law, the exponential term in the model creates singularities in the data that are not present in the linear X-ray transform.
We assume that the attenuation coefficient $\mu$ (the reconstruction target) has a jump discontinuity across a surface $\s\subset\br^3$, and is smooth otherwise. We prove that these additional singularities in the data occur when the X-ray beam is tangent to $\s$ at two points simultaneously. To investigate how the singularities in the data propagate to the reconstruction space, we apply Filtered Back Projection (FBP) type reconstruction. We prove that the artifacts due to beam hardening are locally of conormal type, and lie on a 2-D surface which is the union of all double tangent rays which intersect $\gamma$. The artifacts are notably weaker than the reconstructed jumps of $\mu$ (i.e., the desired singularities), and we quantify this using the order of the corresponding conormal distributions. While our primary theory applies to the regions of $\s$ that are smooth, we also extend our theory to non-smooth $\s$ with ``ridges." These arise where $\s$ is locally the intersection of two smooth surface patches meeting transversely along a curve (e.g., the edge of a cuboid). In addition, we present simulated reconstructions of metal objects in circular cone-beam CT to validate our theory.
\end{abstract}

\section{Introduction}
In this paper, we present a novel microlocal analysis of beam hardening artifacts in non-linear cone-beam CT. Using the theory of conormal distributions, we describe precisely the type of artifacts that occur in the reconstruction due to non-linear effects, and we quantify their strength using the order of the distribution. In contrast to much of the previous literature, we do not need to limit the source spectrum and photon intensity to perform this analysis. We explain this novelty in more detail in our literature review below.
Moreover, our analysis considers the practically relevant case when the X-ray beams are constrained to pass through a 1-D smooth curve, $\gamma \subset \mathbb{R}^3$. This is the appropriate model for cone beam CT.  On the other hand, to the best of our knowledge, the relevant literature up to this point exclusively considers overdetermined ray transforms, e.g., when one has access to all ray integrals in $\br^3$. This has important implications on the microlocal analysis, as the restricted ray transform does not satisfy the Bolker condition globally. Thus, in order to describe the artifacts in an elegant way, and quantify their strength, we discover that we must impose certain non-restrictive geometric conditions, which we introduce and investigate here.

Typically, in X-ray CT applications, one would bombard a metal surface (e.g., a Tungsten target) with high energy electrons to generate a spectrum of photons, say in some energy range $[0,E_{\text{mx}}]$, where $E_{\text{mx}}$ is the maximum energy of the source. These are the foundations of polychromatic X-ray source tubes. Let $P(E)$ denote the intensity of the source beam at energy $E$ multiplied by the efficiency of the X-ray detector. Then, by the Beer-Lambert law, we can model the CT data corresponding to a given ray $L$ as follows,
\be\label{mod_intro}
g(L) =-\log\left[ \int_0^{E_{\mathrm{mx}}}P(E)\exp\paren{ -\int_L \mu(\vx,E) \mathrm{d}\vx } \mathrm{d}E\right].
\ee
where $\mu = \mu(\vx,E)$, $\vx \in \mathbb{R}^3$ is the spatial variable, and the photon energy is $E > 0$. Throughout this paper, we assume that $\mu$ has a jump discontinuity across a surface $\s\subset\br^3$, and is smooth otherwise. This means that $g$ also has singularities when $L$ is tangent to $\s$. If the dependence of $\mu$ on $E$ can be neglected (e.g., in the case of soft tissue within certain energy intervals), then the logarithm and exponential functions effectively cancel out, yielding a linear X-ray transform \cite{katsevich2026}. In the vast majority of CT scans, however, this is not the case and the forward model is intrinsically non-linear. We consider the non-linear case in this paper, specifically the artifacts which arise due to non-linear effects.

The literature considers microlocal analysis of artifacts due to non-linearities in the data \cite{palacios2018quantitative, chihara2024geodesic, katsevich2026,palamodov1986nonlinear,palamodov1990some,park2017characterization,chihara2022microlocal, wang2024identification, wang2021streak}. 
{The earliest papers are \cite{palamodov1986nonlinear,palamodov1990some}, where the analysis is mostly heuristic.} In \cite{palacios2018quantitative}, the authors consider beam hardening artifacts in classical 2-D parallel beam CT. In local neighborhoods of the sinogram which correspond to double tangent rays to $\s$, the authors show that, due to non-linear effects, the data is the product of two conormal distributions $u_1, u_2$ of the same order each with singularities on smooth transversally intersecting manifolds. In such cases, as has been established since Hormander \cite[Theorem 8.2.10]{hor}, $\WF(u_1 u_2)$ can contain elements that are not in $\WF(u_1)$ or $\WF(u_2)$. The singularities of the linear analog are described simply by $u_1 + u_2$, which cannot create new singularities in this way. The authors in \cite{palacios2018quantitative} describe such phenomena, and prove that these additional singularities created in the data induce streaking artifacts in the reconstruction.  They compute the order of the singularities, and show, e.g., that the streak artifacts are one degree smoother on Sobolev scale (i.e., one derivative smoother) when compared to the reconstructed edges of the ground truth. They also discuss methods to enhance the reconstruction to suppress the artifacts due to beam hardening. In cases when $\mu$ is separable (i.e., $\mu(\vx,E) = \mu_1(\vx)\mu_2(E)$), other methods exist in the literature to remove the artifacts which do not use microlocal theory \cite{joseph1978method}. This method is called ``soft tissue correction" or ``water pre-correction." 

In \cite{chihara2024geodesic}, the authors consider streaking artifacts in $n$-dimensions, which occur along families of double-tangent geodesics. When $n=2,3$ and the geodesics are straight lines, their model can be applied to X-ray CT. The linear counterpart of this model, the so-called ``$d$-plane" transform, is introduced and studied by the same authors in \cite{chihara2022microlocal}. In \cite{chihara2024geodesic} metal objects with smooth strictly convex boundary are considered. Similarly to \cite{palacios2018quantitative}, they show that the non-linearities in the data cause new singularities to occur, and they describe precisely how these propagate to the reconstruction using the theory of conormal distributions. As the authors have access to all geodesics through a given space, the data is overdetermined when $n \geq 3$, which helps formalize the analysis as the linear operators in this setting are typically well-behaved (e.g., the X-ray transform in $\mathbb{R}^3$).

{In {many of the works} cited above {(e.g., see \cite{park2017characterization, wang2024identification, wang2021streak}),} the authors pursue similar microlocal analysis ideas {to \cite{palacios2018quantitative,chihara2024geodesic}.} A recurring theme is to characterize the additional singularities in the data induced by nonlinearities and to explain how these give rise to artifacts along double-tangent rays in the reconstruction.}

{Another common feature of {the papers from the literature (see \cite{park2017characterization, palacios2018quantitative, wang2021streak, wang2024identification, chihara2022microlocal, chihara2024geodesic} for specific examples)} is the physical model.} {Namely,} they assume that the source spectrum, $P(E)$, is supported on a narrow interval $[E_0 - \e, E_0 +\e]$, where $E_0 \in [0,E_{\text{mx}}]$ and $\e > 0$ is some small bin width (e.g., $\e = 1$keV).  While this model allows for many interesting analyses, restricting the energy spectrum in this way has its drawbacks, and is not commonly done {in CT applications.} For example, to implement this model in practice, one could apply filters to the spectrum to suppress photon energies away from $E_0$. However, this greatly reduces photon counts and increases noise. 

The analysis of nonlinear artifacts using a more accurate physical model was first carried out in \cite{katsevich2026}. However, in this paper only 2-D parallel beam CT is addressed. We expand this theory to 3-D, and do not require such restrictions on the spectrum used in earlier literature. We require only that $P(E)$ be an integrable function on $[0,E_{\text{mx}}]$ which satisfies $\int_{[0,E_{\text{mx}}]} P(E) \mathrm{d}E > 0$, which is far less restrictive. 

We begin our analysis by considering the physical model \eqref{mod_intro}. Similar to \cite{palacios2018quantitative, chihara2024geodesic, katsevich2026,palamodov1986nonlinear,palamodov1990some,park2017characterization,chihara2022microlocal, wang2024identification, wang2021streak}, we show that when we work locally near points in the sinogram which correspond to double tangent rays, the data can be described as the product of two conormal distributions whose singularities lie on transversally intersecting smooth manifolds. The product generates singularities, and we show how these propagate to artifacts in FBP reconstruction. These are essentially caused by the exponential term in \eqref{mod_intro}, which, when the dependence of $\mu$ on $E$ cannot be neglected, is not addressed by the negative logarithm. We show specifically that the artifacts in the reconstruction are (locally) conormal distributions with singularities on a smooth 2-D surface, $\mathcal{L}_a$, which is the union of all double tangent rays which pass through $\gamma$. We quantify the strength of the artifacts using the order of the conormal distribution. 

As we are working with restricted ray transforms, we discover that certain geometric conditions are required in order to perform this analysis. These are discussed later in detail in Assumption \ref{ass:sngl}, one of which revolves around the Bolker condition. For example, in order for our theorems to hold, we essentially require that Bolker holds within the local neighborhoods we are considering, which is not always guaranteed as such restricted ray transforms do not satisfy Bolker globally. We explain here how such conditions are not restrictive for generic densities. That is, we can construct specific special cases where our assumptions do not hold, but for the purpose of practical application, our theory is generally valid. This is an important distinction between our work and, e.g., \cite{chihara2024geodesic}, which does not address restricted ray transforms in this way. 

While our central theory applies to local neighborhoods where $\s$ is smooth, we also generalize our theorems to cases where $\s$ has ``ridges." These arise when $\s$ is locally the intersection of two smooth surface patches meeting transversely along a curve, e.g., the edge (not including the vertices) of a cuboid. Similar analysis is also conducted in \cite{katsevich2026} but in 2-D. We generalize these results to cone-beam CT.

The remainder of this paper is organized as follows. In section \ref{sect:defns}, we state definitions and theorems from microlocal analysis and the literature on conormal distributions that will be needed to prove our theorems. In section \ref{phys_model}, we introduce our physical model for $\mu$ and the CT data, and we also revise known theory on the linear restricted ray transform which will be needed for our non-linear analysis. In section \ref{sec:sing tang}, we address the singularities in the data which correspond to single tangent rays. In this case, the data behaves locally as a conormal distribution analogous to the linear transform, and we calculate the order of the distribution. In section \ref{sec:dbl tang}, we consider double tangent rays, and show how the non-linearities create singularities in the data that are not present in the linear transform. In sections \ref{sec:props comp} and \ref{sec: sing recon}, we apply FBP and describe how the singularities in the data propagate to the image domain. In section \ref{sec:folds}, we generalize this theory to $\mu$ with non-smooth \tblu{$\s$}. To finish, we present simulated experiments to validate our theory in section \ref{sec:exp}.

\section{Definitions}\label{sect:defns} 

In this section, we review some theory from microlocal analysis which will be used in our theorems. We first provide some
notation and definitions.  Let $X$ and $Y$ be open subsets of
{$\br^{n_X}$ and $\br^{n_Y}$, respectively.}  Let $\Dc(X)$ be the space of smooth functions compactly
supported on $X$ with the standard topology and let $\mathcal{D}'(X)$
denote its dual space, the vector space of distributions on $X$.  Let
$\Ec(X)$ be the space of all smooth functions on $X$ with the standard
topology and let $\mathcal{E}'(X)$ denote its dual space, the vector
space of distributions with compact support contained in $X$. Finally,
let $\Sc(\rn)$ be the space of Schwartz functions, that are rapidly
decreasing at $\infty$ along with all derivatives. See \cite{Rudin:FA}
for more information. 


We now list some notation conventions that will be used throughout this paper:
\begin{enumerate}
\item For a function $f$ in the Schwartz space $\Sc(\br^{n_X})$, we {write
\[
\mathcal{F}f(\xi) = \int_{\br^{n_X}} e^{i x\cdot \xi}f(x)\ \dd x,\quad 
\mathcal{F}^{-1}\hat f(x) = \frac{1}{(2 \pi)^{n_x}}\int_{\br^{n_X}} e^{-i x\cdot \xi}f(\xi)\ \dd \xi
\]
for} the Fourier transform and inverse Fourier transform of $f$,
respectively, {and extend these in the usual way to tempered distributions $\Sc'(\br^{n_X})$} (see \cite[Section 7.1]{hor}). We have used the notation $\hat{f} = \mathcal{F}f$. 

\item We use the standard multi-index notation: if
$\al=(\al_1,\al_2,\dots,\al_n)\in \sparen{0,1,2,\dots}^{n_X}$
is a multi-index and $f$ is a function on $\br^{n_X}$, then
\[\partial^\al f=\paren{\frac{\partial}{\partial
x_1}}^{\al_1}\paren{\frac{\partial}{\partial
x_2}}^{\al_2}\cdots\paren{\frac{\partial}{\partial x_{n_X}}}^{\al_{n_X}}
f.\] If $f$ is a function of $(\vy,\vx,\vsig)$ then $\partial^\al_\vy f$ and $\partial^\al_\vsig f$ are defined similarly.

\item \label{item:T*Xident} We identify the cotangent
spaces of Euclidean spaces with the underlying Euclidean spaces. For example, the cotangent space, 
$T^*(X)$, of $X$ is identified with $X\times \br^{n_X}$. If $\Phi$ is a function of $(\vy,\vx,\vsig)\in Y\times X\times \rr^N$,
then we define $\dd_{\vy} \Phi = \paren{\partyf{1}{\Phi},
\partyf{2}{\Phi}, \cdots, \partyf{{n_X}}{\Phi} }$, and $\dd_\vx\Phi$ and $
\dd_{\vsig} \Phi $ are defined similarly. Identifying the cotangent space with the Euclidean space as mentioned above, we let $\dd\Phi =
\paren{\dd_{\vy} \Phi, \dd_{\vx} \Phi,\dd_{\vsig} \Phi}$.


\end{enumerate}

\noindent The singularities of a function and the directions in which they occur
are described by the wavefront set \cite[Definition 8.1.2]{hor}, which we now define.
\begin{definition}
\label{WF} Let $X$ be an open subset of $\br^{n_X}$ and let $f$ be a distribution in $\mathcal{D}'(X)$.  Let $(\vx_0,\xi_0)\in X\times (\br^{n_X}\setminus\{0\})$.  Then $f$ is \emph{smooth at $\vx_0$ in direction $\xi_0$} if there exist a neighborhood $U$ of $\vx_0$ and $V$ of $\xi_0$ such that for every $\Phi\in \Dc(U)$ and $N\in\br$ there exists a constant $C_N$ such that for all $\xi\in V$ {and $\lambda >1$},
\be
\left|\Fc(\Phi f)(\lambda\xi)\right|\leq C_N(1+\abs{\lambda})^{-N}.
\ee
The pair $(\vx_0,\xi_0)$ is in the \emph{wavefront set,} $\wf(f)$, if
$f$ is not smooth at $\vx_0$ in direction $\xi_0$. We define for fixed $\vx_0$, $\WF_{\vx_0}(f) = \{\xi\in \br^{n_X}\setminus\{0\}: (\vx_0,\xi) \in \WF(f)\}$, and we define the singular support of $f$, $\text{ssupp}(f)$, as the natural projection of $\WF(f)$ onto $X$.
\end{definition}

The wavefront set is an important consideration in imaging since elements of the wavefront set $\WF(f)$ will correspond to sharp features of an image $f$.

The wavefront set of a distribution on $X$ is normally defined as a
subset the cotangent bundle $T^*(X)$ so it is invariant under
diffeomorphisms, but we do not need this invariance, so we will
continue to identify $T^*(X) = X \times \rr^{n_X}$ and consider $\WF(f)$ as
a subset of $X\times (\rr^{n_X}\setminus\{0\})$.


 \begin{definition}[{\cite[Definition 7.8.1]{hor}}] \label{ellip}We define
 $S^m(Y \times X, \br^N)$ to be the
set of $a\in \Ec(Y\times X\times \br^N)$ such that for every
compact set $K\subset Y\times X$ and all multi--indices $\al,
\bt, \gamma$ the bound
\[
\left|\partial^{\gamma}_{\vy}\partial^{\bt}_{\vx}\partial^{\al}_{\vsig}a(\vy,\vx,\vsig)\right|\leq
C_{K,\al,\bt,\gamma}(1+\norm{\vsig})^{m-|\al|},\ \ \ (\vy,\vx)\in K,\
\vsig\in\br^N,
\]
holds for some constant $C_{K,\al,\bt,\gamma}>0$. 

 The elements of $S^m$ are called \emph{symbols} of order $m$.  Note
that this symbol class is  sometimes denoted $S^m_{1,0}$.  The symbol
$a\in S^m(Y \times X,\rr^N)$ is \emph{elliptic} if for each compact set
$K\subset Y\times X$, there is a $C_K>0$ and $M>0$ such that
\bel{def:elliptic} \abs{a(\vy,\vx,\vsig)}\geq C_K(1+\norm{\vsig})^m,\
\ \ (\vy,\vx)\in K,\ \norm{\vsig}\geq M.
\ee 
\end{definition}

\begin{definition}[{\cite[Section 7.8]{hor}, \cite[Definition 21.2.15]{hor3}}] \label{phasedef}
A real-valued function $\Phi=\Phi(\vy,\vx,\vsig)\in \Ec(Y\times X\times(\br^N\smo))$ is a \emph{phase
function} if $\Phi(\vy,\vx,\lambda\vsig)=\lambda\Phi(\vy,\vx,\vsig)$, $\forall \la>0$, and $\mathrm{d}\Phi$ is nowhere zero. The \emph{critical set of $\Phi$} is
\be
\Sigma_\Phi=\{(\vy,\vx,\vsig)\in Y\times X\times(\br^N\setminus\{0\}): \dd_{\vsig}\Phi=0\}.
\ee
 A phase function is
\emph{clean} if the critical set $\Sigma_\Phi$ is a smooth manifold {with tangent space defined {by} the kernel of $\mathrm{d}\,(\mathrm{d}_\sigma\Phi)$ on $\Sigma_\Phi$. Here, the derivative $\mathrm{d}$ is applied component-wise to the vector-valued function $\mathrm{d}_\sigma\Phi$. So, $\mathrm{d}\,(\mathrm{d}_\sigma\Phi)$ is treated as a Jacobian matrix of dimensions $N\times ({n_Y + n_X}+N)$.}
\end{definition}

\begin{definition}[{\cite[Definition 21.2.15]{hor3} and \cite[section 25.2]{hor4}}]\label{def:canon} 
Let $X$ and $Y$ be open subsets of $\rn$. Let $\Phi\in \Ec\paren{Y \times X \times{\rr}^N}$ be a clean phase function.  In addition, we say that $\Phi$ is nondegenerate if the differentials 
\be
\dd (\pa\Phi/\pa \sigma_j),\ j=1,2,\dots,N,
\ee
are linearly independent.
The canonical relation parameterized by $\Phi$ is defined as
\be\label{def:Cgenl} \begin{aligned} \Cc=&\sparen{
\paren{\paren{\vy,\dd_{\vy}\Phi(\vy,\vx,\vsig)};\paren{\vx,-\dd_{\vx}\Phi(\vy,\vx,\vsig)}}:(\vy,\vx,\vsig)\in
\Sigma_{\Phi}}{.}
\end{aligned}
\ee
\end{definition}

\begin{definition}\label{FIOdef}
Let $X$ and $Y$ be open subsets of $\br^{n_X}$ and $\br^{n_Y}$, respectively. Let an operator $A :
\Dc(X)\to \mathcal{D}'(Y)$ be defined by the distribution kernel $K_A\in \mathcal{D}'(Y\times X)$, in the sense that $Af(\vy)=\int_{X}K_A(\vy,\vx)f(\vx)\mathrm{d}\vx$. Then we call $K_A$ the \emph{Schwartz kernel} of $A$. A Fourier integral operator (FIO) of order $\mu = m + N/2 - (n_X+n_Y)/4$ is an operator $A:\Dc(X)\to \mathcal{D}'(Y)$ with Schwartz kernel given by an
oscillatory integral of the form
\be \label{oscint}
K_A(\vy,\vx)=\int_{\br^N}
e^{i\Phi(\vy,\vx,\vsig)}a(\vy,\vx,\vsig) \mathrm{d}\vsig,
\ee
where $\Phi$ is a clean nondegenerate phase function and $a$ is a symbol in $S^m(Y \times X , \br^N)$. The canonical relation of $A$ is the canonical relation $\mathcal{C}$ of $\Phi$ defined in
\eqref{def:Cgenl}. An FIO is called a pseudodifferential operator ($\Psi$DO) if $X = Y$ and its canonical relation $\Cc$ is contained in the diagonal, i.e.,
$\Cc \subset \Delta := \{ (\vx,\xi;\vx,\xi):\,(\vx,\xi)\in X\times(\br^N\setminus\{0\})\}$.
\end{definition}

FIOs are defined in \cite{hor4} more generally as operators with Schwartz kernel locally given by expressions of the form \eqref{oscint} where the local phase functions give rise to pieces of the same global cannonical relation. However, the local expression \eqref{oscint} is sufficient for our purposes.

We now state the H\"ormander-Sato Lemma \cite[Theorem 8.2.13]{hor} and \cite[Lemma 25.1.2]{hor4}, which provides  the relationship between the wavefront set of distributions and their images under FIO{s}.

\begin{theorem}[H\"ormander-Sato Lemma]\label{thm:HS} Let $f\in \Ec'(X)$ and
let ${A}:\Ec'(X)\to \Dc'(Y)$ be an FIO with canonical relation $\Cc$.
Then, $\wf({A}f)\subset \Cc(\wf(f))$.
\end{theorem}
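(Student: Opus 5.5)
The plan is to reduce the Hörmander--Sato lemma to the defining estimate of Definition~\ref{WF}, working microlocally. Fix $(\vy_0,\eta_0)\in T^*(Y)\setminus 0$ with $(\vy_0,\eta_0)\notin \Cc(\wf(f))$; we must show that $Af$ is smooth at $\vy_0$ in direction $\eta_0$. Since $f\in\Ec'(X)$ has compact support, $\wf(f)$ is closed and conic. The projection of $\Cc$ restricted to compact sets in the base is proper in the relevant sense, so $\Cc(\wf(f))$ is closed and conic over a neighborhood of $\vy_0$. Hence there is a conic neighborhood $\Gamma$ of $(\vy_0,\eta_0)$ disjoint from $\Cc(\wf(f))$.

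Next, split $f$ using a microlocal partition of unity. Choose finitely many cutoffs $\phi_j\in\Dc(X)$ and conic cutoffs $\chi_j(\xi)$, homogeneous of degree zero for large $|\xi|$, and decompose $f=\sum_j \chi_j(D)(\phi_j f)+r$ with $r$ smooth, arranged so that each piece falls into one of two classes. For the first class, $\wf(\chi_j(D)\phi_j f)$ lies in a small conic set $W_j$ with $\Cc(W_j)\cap\Gamma=\emptyset$. For the second class, $\supp\phi_j\times\supp\chi_j$ avoids $\wf(f)$, and then $\chi_j(D)(\phi_j f)$ is smooth by Definition~\ref{WF}. The smooth pieces are mapped by $A$ to smooth functions, since the kernel \eqref{oscint} integrated against a compactly supported smooth function gives a smooth output (integrate by parts in $\vx$ using $\dd_\vx\Phi\neq 0$, which follows from nondegeneracy and cleanness).

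The main estimate, and the expected obstacle, is to show that for $u$ compactly supported and localized in $\xi$ to a cone $V$ with $\Cc(X\times V)\cap\Gamma=\emptyset$, the quantity $\Fc(\psi Au)(\lambda\eta)$ decays rapidly for $\eta$ near $\eta_0$ and $\psi$ supported near $\vy_0$. Writing $u$ by Fourier inversion,
\[
\Fc(\psi Au)(\lambda\eta)=\iiint e^{i(\lambda\vy\cdot\eta+\Phi(\vy,\vx,\vsig)-\vx\cdot\xi)}\psi(\vy)a(\vy,\vx,\vsig)\hat u(\xi)\,\dd\vsig\,\dd\vx\,\dd\xi\,\dd\vy,
\]
after inserting a cutoff in $\vx$. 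The total phase $\lambda\vy\cdot\eta+\Phi-\vx\cdot\xi$ has no critical points jointly in $(\vy,\vx,\vsig)$ on the support. A critical point would require $\dd_\vsig\Phi=0$, $\dd_\vy\Phi=-\lambda\eta$, and $\dd_\vx\Phi=\xi$, which would put $(\vy,-\lambda\eta;\vx,-\xi)$, up to the sign conventions of \eqref{def:Cgenl}, into $\Cc$ with $\xi\in V$. That is excluded by the choice of $\Gamma$. By homogeneity, the gradient of the phase is bounded below by $c(\lambda+|\vsig|+|\xi|)$ on conic regions. The work lies in handling the regions carefully; I would split according to the relative sizes of $\lambda$, $|\vsig|$ and $|\xi|$.

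Integrating by parts with the operator $L=|\nabla\Psi|^{-2}\overline{\nabla\Psi}\cdot\nabla/i$ then gains $(\lambda+|\vsig|+|\xi|)^{-1}$ per step, using the symbol bounds of Definition~\ref{ellip} for $a$ and the polynomial bound on $\hat u$. This gives $O(\lambda^{-N})$ for every $N$. The delicate point is uniformity: $\hat u$ only has polynomial growth, so enough integrations by parts are needed to make the $(\vsig,\xi)$ integrals absolutely convergent. The estimate must also be uniform in $\eta$ near $\eta_0$, which is why the conic neighborhoods were fixed in advance.
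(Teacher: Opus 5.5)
The paper gives no proof of this statement; it is quoted from H\"ormander. There one first shows $\WF'(K_A)\subset\Cc$, and then applies the general kernel estimate $\WF(Af)\subset\WF(K_A)_Y\cup\WF'(K_A)\circ\WF(f)$, obtained by writing $Af$ as the push-forward of $K_A\cdot(1\otimes f)$. Here $\WF(K_A)_Y=\{(\vy,\eta):(\vy,\vx,\eta,0)\in\WF(K_A)\}$. Your single non-stationary-phase argument is a legitimate, more hands-on alternative, but two steps fail as written.

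First, $\dd_{\vx}\Phi\neq0$ does not follow from nondegeneracy and cleanness. The phase $\Phi=\sigma(y-x^2)$ on $\br\times\br\times(\br\setminus 0)$ is nondegenerate, yet $\dd_x\Phi=0$ at $x=0$. The corresponding operator $u\mapsto\int\delta(y-x^2)u(x)\,\dd x$ sends $u\in C_c^\infty$ with $u(0)\neq0$ to a function with a $y_+^{-1/2}$ singularity. So the lemma itself fails for this operator; the singularity is exactly the $\WF(K_A)_Y$ term. The missing hypothesis is the standing convention $\Cc\subset(T^*Y\setminus 0)\times(T^*X\setminus 0)$, equivalently $\dd_{\vx,\vsig}\Phi\neq0$ and $\dd_{\vy,\vsig}\Phi\neq0$ for $\vsig\neq0$. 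It holds for $R$, since $\xi=\vsig\hat\Theta^\prime\neq0$ and $\eta_\theta=t\vsig\hat\Theta^{\prime\prime}\neq0$. You need it in three places:
\begin{itemize}
\item for the closedness of $\Cc(\WF(f))$;
\item for the smooth pieces, where you must integrate by parts in $(\vx,\vsig)$ jointly, not in $\vx$ alone;
\item in the main estimate, where the boundary faces $\lambda/|\vsig|\to0$ and $|\xi|/|\vsig|\to0$ of the $(\lambda,\vsig,\xi)$-region correspond to points of $\Cc$ with $\eta=0$ or $\xi=0$. Your choice of $\Gamma$ does not exclude these.
\end{itemize}

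Second, the bound $|\nabla\Psi|\ge c(\lambda+|\vsig|+|\xi|)$ for $\Psi=\lambda\vy\cdot\eta+\Phi-\vx\cdot\xi$ is false, because $\nabla_{\vsig}\Psi=\dd_{\vsig}\Phi$ is homogeneous of degree $0$, not $1$. Take the $\Psi$DO phase $(\vy-\vx)\cdot\vsig$ with the $\vx$-cutoff supported away from $\vy_0$, so any cone $V$ is admissible, e.g.\ one containing $\eta_0$. The point $\vsig=-\lambda\eta$, $\xi=\lambda\eta$ is non-critical, but $|\nabla\Psi|=|\vy-\vx|$ stays bounded as $\lambda\to\infty$, so your $L$ gains nothing there. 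Splitting by relative sizes does not help, since here $|\vsig|=\lambda=|\xi|$.

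The fix is to weight the $\vsig$-derivatives. The vector $(\nabla_{\vy,\vx}\Psi,|\vsig|\nabla_{\vsig}\Psi)$ is homogeneous of degree one. Given the absence of critical points and the zero-section condition, it is therefore bounded below by $c(\lambda+|\vsig|+|\xi|)$ on $|\vsig|\ge1$. One then integrates by parts with
\[
L=\frac{\overline{\nabla_{\vy,\vx}\Psi}\cdot\nabla_{\vy,\vx}+|\vsig|^2\,\overline{\nabla_{\vsig}\Psi}\cdot\nabla_{\vsig}}{i\big(|\nabla_{\vy,\vx}\Psi|^2+|\vsig|^2|\nabla_{\vsig}\Psi|^2\big)}.
\]
The extra factor $|\vsig|$ is absorbed by the gain $|\vsig|^{-1}$ from $\vsig$-derivatives of the symbol, and the region $|\vsig|\le1$ contributes a smooth kernel.

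Finally, your ``up to sign conventions'' hides a real point. With the paper's $\mathcal{F}f(\xi)=\int e^{i\vx\cdot\xi}f\,\dd\vx$, your critical-point computation gives
\[
\WF(Af)\subset\{(\vy,-\eta):(\vy,\eta;\vx,-\xi)\in\Cc,\ (\vx,\xi)\in\WF(f)\}.
\]
This equals $\Cc(\WF(f))$ for $R$, because its phase is linear in $\vsig$, but not for a general $\Cc$.
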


Let $A$ be an FIO, then its formal adjoint $A^*$ is also an FIO, and if $\Cc$ is the canonical relation of $A$, then the canonical relation of $A^*$ is $\Cc^t$ \cite[Theorem 25.2.2]{hor4}. Many imaging techniques are based
on application of the adjoint operator $A^*$ and so to understand artifacts we consider $A^* A$ (or, if $A$ does not map to $\Ec'(Y)$, then $A^* \psi A$ for an appropriate smooth cutoff $\psi$). Because of Theorem \ref{thm:HS},
\begin{equation}
\label{compo}
\wf(A^* \psi A f) \subset (\Cc^t \circ \Cc)(\wf(f)).
\end{equation}
The next two definitions provide tools to analyze the composition in equation \eqref{compo}.

\begin{definition}
\label{defproj} Let $\Cc\subset T^*(Y\times X)$ be the canonical relation associated to the FIO ${A}:\mathcal{E}'(X)\to \mathcal{D}'(Y)$. We let $\Pi_L$ and $\Pi_R$ denote the natural left- and right-projections of $\Cc$, projecting onto the appropriate coordinates: $\Pi_L:\Cc\to T^*(Y)$ and $\Pi_R : \Cc\to T^*(X)$.
\end{definition}

The convention to call these projections left and right comes from the double fibration approach to the  Radon transforms, which are examples of FIOs \cite[Section 4]{gruhl1}.

If $A$ satisfies our next definition, then, for any properly supported $\Psi$DO $\mathcal{P}$, $A^* A$ (or $A^* \mathcal{P} A$) is a $\Psi$DO \cite{GS1977, quinto}.

\begin{definition}[Bolker condition]\label{def:bolker} 
Let ${A}:\Ec'(X)\to \Dc'(Y)$ be a FIO with canonical relation $\Cc$ then {$A$} (or $\Cc$) satisfies the Bolker condition if the natural projection $\Pi_L:\Cc\to T^*(Y)$ is an embedding (injective immersion).
\end{definition}

Using \eqref{compo}, we see that under the Bolker condition the wavefront set of $A^* \mathcal{P} Af$ will be contained in the wavefront set of $f$. Intuitively, the reconstructed image ($A^* \mathcal{P} Af$) will only include singularities at the same positions and in the same directions as the original image ($f$). 

We now define clean intersection \cite[Definition C.3.2]{hor3} and clean composition \cite[Definition 5.2, Chapter VIII]{trev2}.

\begin{definition}\label{def:clean int}
We say that two manifolds $L$ and $M$ intersect cleanly if $L\cap M$ is a manifold and at every point of $L\cap M$, $T(L\cap M) = (TL)\cap(TM)$.
\end{definition}

\begin{definition}\label{def:clean comp}
Let $\Cc \subset T^*X\backslash \{0\} \times T^*Y\backslash \{0\}$ and $\Lambda \subset T^*Y\backslash \{0\}$ be Lagrangian submanifolds. Then, we say the composition $\Cc \circ \Lambda$ is clean if $L = \Cc \times \Lambda$ and $M = (T^*X\backslash \{0\}) \times \Delta_Y$ intersect cleanly and the natural projection $L\cap M \to \Cc \circ \Lambda$ is proper. Here, $\Delta_Y$ denotes the diagonal in $T^*Y\backslash \{0\} \times T^*Y\backslash \{0\}$.
\end{definition}

We have the definition of Sobolev spaces from \cite[page 200]{natterer}.

\begin{definition}\label{sobo_spaces}
The Sobolev space of order $\al$ is the vector space of tempered distributions satisfying
\be
H^\al(\br^n) = \{f\in \mathcal{S}'(\br^n) : (1 + |\xi|^2)^{\al/2} \hat{f} \in L^2(\br^n)\}
\ee
with norm 
\be\label{HRnnorm}
\|f\|_{H^{\al}(\br^n)} = \|(1 + |\xi|^2)^{\al/2} \hat{f}\|_{L^2(\br^n)}.
\ee
\end{definition}

\noindent We now define local and microlocal Sobolev regularity and wavefront sets \cite{Q1993sing}.

\begin{definition} \label{def:SobolevWF}
A distribution $g$ is in $H^{\al}$ locally near a point $\vx_0$ if and only if there exists a cut-off function $\varphi \in C_c^{\infty}(\br^n)$ with $\varphi(\vx_0) \neq 0$ such that $\varphi g \in H^{\al}(\br^n)$. The distribution $g$ is in $H^{\al}$ microlocally near $(\vx_0,\xi_0)$ if and only if there is a cut-off function $\varphi \in C^{\infty}_c(\br^n)$ with $\varphi(\vx_0) \neq 0$ and function $u(\xi)$ homogeneous of degree zero and smooth on $\br^n\backslash \{0\}$ with $u(\xi_0)\neq 0$ such that $(1+|\xi|^2)^{\al/2}u(\xi)\mathcal{F}(\varphi g)(\xi) \in L^2(\br^n)$. We define the Sobolev wavefront set
\be
\WF^{\al}(g) = \left\{ (\vx,\xi) \in \WF(g) : g\ \text{is not in}\ H^{\al}\ \text{microlocally near}\ (\vx, \xi) \right\},
\ee
and $\text{ssupp}^{\al}(g)$ as the natural projection of $\WF^{\al}(g)$ onto $X$.
\end{definition}

We define the distribution $x_+ = 0 $ for $x < 0$ and $x_+ = x$ when $x\geq 0$. Similarly we define $x_- = (-x)_+$. More general distributions $x_\pm^r$, $r\not=-1,-2,\dots$, can be defined as well \cite[Section 3.2, Chapter I]{gs}. Related distributions are $(\lambda \pm i0)^r$. They can be expressed as \cite[eqs. (3), (4), Section 3.6, Chapter I]{gs}:
$$
(\lambda \pm i0)^r = \lambda_+^r + e^{\pm i\pi r } \lambda_-^r,\ r\not=-1,-2,\dots.
$$
From \cite[Entries 21 and 24, Table of Fourier Transforms]{gs}, we obtain the Fourier transform of the distributions $x_\pm^r$:
\be
\label{equ_dist}
\mathcal{F} (x_\pm^{-(r+1)}) = c(r)(\lambda \pm i0)^r,\ r\in\br,r\not=0,1,2,\dots,
\ee
where $\lambda$ is dual to $x$, $c(r) = \Gamma(-r)\exp(\mp i(\pi/2)r)$, and $\Gamma$ is the Gamma function. Thus, the Fourier transform of $x_\pm^r$ decays at a rate $r+1$ as $\lambda \to \infty$.

\noindent We now review some preliminaries on conormal distributions.
\begin{definition}[\cite{palacios2018quantitative,greenleaf1993recovering}]
\label{def_conormal}
Let $\Phi(\vx,\vsig)$, $\vsig\in\br^N$, be a phase function, let $a(\vx,\vsig)$ be a symbol, and let 
\be
N^*\Gamma = \{(\vx, \mathrm{d}_\vx\Phi(\vx,\vsig)) \in T^* X : \vx \in X, \mathrm{d}_\vsig\Phi(\vx,\vsig) = 0\},
\ee
be the conormal bundle of $\Gamma=\{\vx\in X:\dd \Phi(\vx,\vsig)=0\}$, a smooth manifold of codimension $N$ in $X$. Then, we denote $I^\beta(N^*\Gamma) $ to be the space of Lagrangian distributions of order $\beta$ associated with $N^*\Gamma$. The elements $u \in I^\beta(N^*\Gamma)$  take the form
\be
u = \int_{\br^N}a(\vx,\vsig) e^{i \Phi(\vx,\vsig)} \mathrm{d}\vsig,
\ee
where the order of $a$ is $\q = \beta + n_X/4 -N/2$. 

We will also sometimes equivalently write $u\in  \dot{I}^{\q}(\Gamma)$, where $\dot{I}^{\q}$ is the space of conormal distributions. Note that the order of $u$ is computed differently, depending on whether we consider the inclusion $u \in I^\beta(N^*\Gamma)$ or $u \in \dot{I}^\q(\Gamma)$, even though $I^\beta(N^*\Gamma)=\dot{I}^\q(\Gamma)$. We use the dot notation to distinguish the two spaces.
\end{definition}

We also have the following embedding theorem \cite[Theorem 5.4.1]{dusthorII_72}:
\begin{theorem}\label{thm:embedding}
$I^\beta(N^*\Gamma)\subset H^{\alpha}(\br^{n_X})$ if and only if $\bt+n_X/4+\al < 0$.
\end{theorem}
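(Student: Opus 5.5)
The proof will reduce to a model case and then compute directly on the Fourier side. Both $I^\beta(N^*\Gamma)$ and $H^\alpha_{\mathrm{loc}}$ are invariant under diffeomorphisms and under multiplication by $C_c^\infty$ cutoffs, so I will argue locally. Since every distribution in the class is a locally finite sum of localized pieces, the inclusion is a local statement.

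\textbf{Reduction to a model case.} Near a point of $\Gamma$, I choose coordinates $\vx=(\vx',\vx'')$ with $\vx'\in\br^{n_X-N}$, $\vx''\in\br^N$ and $\Gamma=\{\vx''=0\}$. By H\"ormander's equivalence of phase functions parametrizing the same Lagrangian $N^*\Gamma$, and since this preserves the order $\beta$ (the relation $\q=\beta+n_X/4-N/2$ is built so that it is preserved), every $u\in I^\beta(N^*\Gamma)$ can be rewritten modulo $C^\infty$ as
\[
u(\vx)=\int_{\br^N} e^{i\vx''\cdot\vsig}\,a(\vx',\vsig)\,\dd\vsig,\qquad a\in S^{\q},
\]
with $a$ compactly supported in $\vx'$. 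To remove the $\vx''$-dependence of the amplitude, I Taylor expand $a(\vx',\vx'',\vsig)$ in $\vx''$ and integrate by parts using $\vx''_j e^{i\vx''\cdot\vsig}=-i\pa_{\sigma_j}e^{i\vx''\cdot\vsig}$. Each such step lowers the order by one, and Borel summation then gives the reduced symbol. I expect this step to be the main technical obstacle, but it is standard (H\"ormander, Vol.~III, Section~18.2).

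\textbf{Sufficiency.} In the model case the full Fourier transform is, up to constants,
\[
\hat u(\eta,\vsig)=\mathcal{F}_{\vx'}a(\eta,\vsig).
\]
Because $a$ is smooth and compactly supported in $\vx'$, with $\vx'$-derivatives obeying the same symbol bounds, integration by parts in $\vx'$ gives
\[
|\hat u(\eta,\vsig)|\le C_M(1+|\eta|)^{-M}(1+|\vsig|)^{\q}\quad\text{for every } M.
\]
Peetre's inequality gives $(1+|\eta|^2+|\vsig|^2)^{\al}\le C(1+|\eta|^2)^{|\al|}(1+|\vsig|^2)^{\al}$. Hence $\|u\|_{H^\al}^2$ is bounded by a constant times
\[
\int(1+|\vsig|)^{2\al+2\q}\,\dd\vsig,
\]
which is finite when $2\al+2\q<-N$. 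Substituting $\q=\beta+n_X/4-N/2$, this condition is exactly $\al+\beta+n_X/4<0$.

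\textbf{Necessity.} Because the statement asserts inclusion of the whole class, it suffices to exhibit a single element outside $H^\al$ when $\al+\beta+n_X/4\ge 0$. I take
\[
a(\vx',\vsig)=\chi(\vx')\bigl(1-\psi(\vsig)\bigr)|\vsig|^{\q},
\]
with $\chi\in C_c^\infty$ satisfying $\int\chi\neq0$, and $\psi$ a cutoff near the origin. Then $\hat u(\eta,\vsig)=\hat\chi(\eta)\bigl(1-\psi(\vsig)\bigr)|\vsig|^{\q}$. Since $\hat\chi(0)\neq0$, we have $|\hat\chi(\eta)|\ge c>0$ on a small ball $|\eta|\le\delta$. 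On this ball $(1+|\eta|^2+|\vsig|^2)^\al\sim(1+|\vsig|^2)^\al$, so the $H^\al$ norm is bounded below by a constant times
\[
\int_{|\vsig|\ge1}|\vsig|^{2\al+2\q}\,\dd\vsig,
\]
which diverges when $2\al+2\q\ge -N$. This proves the converse and completes the plan.
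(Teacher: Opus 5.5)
Your proof is correct. It differs from the paper only in that the paper gives no proof: it imports the statement from Duistermaat--H\"ormander, where it is part of the general theory of Lagrangian distributions $I^m(X,\Lambda)$. You instead use H\"ormander's equivalence-of-phase theorem to reduce to the model form $\int e^{i\vx''\cdot\vsig}a\,\dd\vsig$. That theorem is your one nontrivial black box, and it is exactly what makes $\beta$ independent of the parametrization. You then read the threshold off Plancherel and Peetre's inequality: integrability of $(1+|\vsig|)^{2(\al+\q)}$ over $\br^N$, combined with $\q=\beta+n_X/4-N/2$, is precisely $\al+\beta+n_X/4<0$.

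Your route makes it transparent where the $n_X/4$ comes from. Your explicit element $\chi(\vx')(1-\psi(\vsig))|\vsig|^{\q}$ also settles the ``only if'' direction, including the borderline case $\al+\beta+n_X/4=0$ (logarithmic divergence). The citation buys generality beyond conormal bundles, which this paper never needs.

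Two small remarks. First, elements of $I^\beta(N^*\Gamma)$ need not be compactly supported, so the inclusion must be read in $H^\al_{\mathrm{loc}}$ (or for compactly supported elements); this is how you treat it. Second, the step you flag as the main obstacle, removing the $\vx''$-dependence of the amplitude, is unnecessary for sufficiency. If $a(\vx,\vsig)$ is compactly supported in $\vx$ and $\hat a(\zeta,\vsig)=\int e^{i\vx\cdot\zeta}a(\vx,\vsig)\,\dd\vx$, then $\hat u(\xi',\xi'')=\int\hat a\big((\xi',\xi''+\vsig),\vsig\big)\,\dd\vsig$ with $|\hat a(\zeta,\vsig)|\le C_M(1+|\zeta|)^{-M}(1+|\vsig|)^{\q}$. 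Peetre's inequality then gives $|\hat u(\xi',\xi'')|\le C_M(1+|\xi'|)^{-M}(1+|\xi''|)^{\q}$ directly. For necessity, your example already has an $\vx''$-independent amplitude.
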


The below proposition will be used extensively throughout the paper in our analysis of the singularities of the non-linear transform.
\begin{proposition}
\label{prop_sqrt}
Let $s - \phi(\vv) = 0$ be a local defining equation of a smooth two-dimensional manifold $\Gamma\subset\br^3$, where $\phi$ is smooth, $s\in \br$ and $\vv \in \br^2$. For any $r >0$ and any smooth function $u$, one has
\be
\label{dist_order_0}
u(s,\vv)(s - \phi(\vv))^r_\pm \in \dot{I}^{-(r+1)}(\Gamma)=I^{-r-5/4}(N^*\Gamma).
\ee
\end{proposition}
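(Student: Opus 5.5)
The plan is to write $u(s,\vv)(s-\phi(\vv))^r_\pm$ explicitly as an oscillatory integral with a one-dimensional phase variable, $N=1$, whose phase function parameterizes $N^*\Gamma$. Once that is done, the order is read off from Definition \ref{def_conormal}.

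\textbf{Step 1: model case.} Take $u\equiv 1$ near a point and set $t=s-\phi(\vv)$. Invert \eqref{equ_dist} with $r$ replaced by $-(r+1)$; this is allowed since $-(r+1)\notin\{0,1,2,\dots\}$ for $r>0$. This gives
\be
t_\pm^r=\frac{1}{2\pi c(-(r+1))}\int_{\br} e^{-it\sigma}(\sigma\pm i0)^{-(r+1)}\,\dd\sigma .
\ee
Here $c(-(r+1))=\Gamma(r+1)e^{\pm i\pi(r+1)/2}\neq0$.

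\textbf{Step 2: split the amplitude.} Let $\chi\in C_c^\infty(\br)$ equal $1$ near $0$, and split $(\sigma\pm i0)^{-(r+1)}=\chi(\sigma)(\sigma\pm i0)^{-(r+1)}+(1-\chi(\sigma))(\sigma\pm i0)^{-(r+1)}$.

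The first piece is a compactly supported distribution. Its inverse Fourier transform is therefore smooth in $t$, and after composing with the smooth map $(s,\vv)\mapsto s-\phi(\vv)$ it contributes a $C^\infty$ function. Smooth functions lie in every $\dot I^{\q}(\Gamma)$ (they have symbols of order $-\infty$), so this piece is harmless.

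In the second piece, $a(\sigma)=(1-\chi(\sigma))(\sigma\pm i0)^{-(r+1)}$ equals $\sigma_+^{-(r+1)}+e^{\mp i\pi(r+1)}\sigma_-^{-(r+1)}$ away from $0$. This is smooth and homogeneous of degree $-(r+1)$ for large $|\sigma|$, so $a\in S^{-(r+1)}(\br^3,\br)$.

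\textbf{Step 3: identify the phase and conormal bundle.} The phase is $\Phi(s,\vv,\sigma)=-\sigma(s-\phi(\vv))$. It is homogeneous of degree one in $\sigma$, and $\dd_{s}\Phi=-\sigma\ne0$, so $\dd\Phi\neq 0$. The critical set $\dd_\sigma\Phi=0$ is exactly $\{s=\phi(\vv)\}=\Gamma$. The differential $\dd(\pa_\sigma\Phi)=-(1,-\nabla\phi,0)$ is nonvanishing, so $\Phi$ is clean and nondegenerate. The covectors $\dd_{(s,\vv)}\Phi=-\sigma(1,-\nabla\phi(\vv))$ on $\Gamma$ sweep out $N^*\Gamma$.

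\textbf{Step 4: general $u$ and the order count.} Multiplying by smooth $u$ simply multiplies the amplitude, giving $u(s,\vv)a(\sigma)\in S^{-(r+1)}(\br^3,\br)$. Then $u\,(s-\phi)^r_\pm$ differs by a smooth function from
\be
\int_\br e^{i\Phi}\tilde a\,\dd\sigma,\qquad \tilde a\in S^{-(r+1)},
\ee
so $u\,(s-\phi)^r_\pm\in\dot I^{-(r+1)}(\Gamma)$ with $\q=-(r+1)$. Definition \ref{def_conormal} gives $\q=\beta+n_X/4-N/2$ with $n_X=3$ and $N=1$, hence $\beta=-(r+1)-3/4+1/2=-r-5/4$, as claimed in \eqref{dist_order_0}.

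\textbf{Main obstacle.} The delicate points are the low-frequency cutoff and the fact that $(\sigma\pm i0)^{-(r+1)}$ is not locally integrable at $0$ when $r\ge0$. The second is handled by the cutoff in Step 2, since only the compactly supported part sees the singularity at $0$, and that part is a distribution whose inverse Fourier transform is entire in $t$. The other point to check is that composing with $s-\phi(\vv)$ preserves smoothness and the symbol estimates, which holds trivially since $\sigma$ is untouched. The conclusion is local, consistent with the local defining equation assumed in the statement.
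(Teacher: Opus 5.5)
Your proof is correct and follows essentially the same route as the paper. Both write $(s-\phi(\vv))^r_\pm$ through the Fourier representation \eqref{equ_dist} and cut off near $\sigma=0$: the low-frequency part is smooth by Paley--Wiener, and the high-frequency part has an $S^{-(r+1)}$ amplitude with phase $-\sigma(s-\phi(\vv))$. Both then convert $\nu=-(r+1)$ to $\beta=-r-5/4$ using $n_X=3$, $N=1$. Your explicit check that this phase is clean, nondegenerate and parametrizes $N^*\Gamma$ is a useful addition that the paper omits.

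There is one harmless slip in Step 1: inverting \eqref{equ_dist} gives the prefactor $c(-(r+1))/(2\pi)$, not $1/(2\pi c(-(r+1)))$. Any nonzero constant is absorbed into the amplitude, so the conclusion is unaffected.
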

\begin{proof}
By \eqref{equ_dist}, we have
\be
u(s,\vv)(s - \phi(\vv))^r_\pm = \tilde{u}(s,\vv)\int_{\br}(\lambda \pm i0)^{-(r+1)} e^{-i \lambda (s- \phi(\vv))} \dd\lambda,
\ee
where $\tilde{u}$ is smooth, and the coefficient $(2\pi)^{-1}$ is absorbed by $\tilde u$. Let $\varphi(\lambda)$ be a smooth cutoff which is equal to 1 in a small neighborhood of zero. Dropping the arguments of $u$ and $\tilde u$, we obtain
\be
\begin{split}
&u(s - \phi(\vv))^r_\pm\\ 
&= \tilde{u}\left[ \int_{\br}\varphi(\lambda)(\lambda \pm i0)^{-(r+1)} e^{-i \lambda (s- \phi(\vv))} \dd\lambda + \int_{\br}(1- \varphi(\lambda)) (\lambda \pm i0)^{-(r+1)} e^{-i \lambda (s- \phi(\vv))} \dd\lambda \right]\\
&= \tilde{u}\Big[ \psi_{\pm}(s- \phi(\vv)) + \int_{\br}a(\lambda) e^{i \lambda (s- \phi(\vv))} \dd\lambda\Big].
\end{split}
\ee
Here, $\psi_{\pm}$ is smooth by the Paley-Weiner theorem \cite{paley1934fourier}, and $a$ is a symbol order $\q = -(r+1)$, noting that we have localized away from the singularity in $(\lambda \pm i0)^{\q}$ at zero using the smooth cutoff. Hence, $u\in\dot{I}^{-(r+1)}(\Gamma)$. The order of the distribution ($\beta$) is calculated using $\q = \beta + 3/4 - N/2$, where $N = 1$ is the dimension of the variable $\lambda$. This yields $\beta = -r-1/2-3/4=-r-5/4$.
\end{proof}

We now define paired Lagrangian distributions.

\begin{definition}[\cite{greenleaf1993recovering}]
\label{pair_lag}
Let $\Lambda_1,\Lambda_2 \subset T^*X\backslash 0$ be a pair of cleanly intersecting Lagrangians. Then, associated to the pair $(\Lambda_1, \Lambda_2)$ is a class of Lagrangian distributions $u\in I^{p,l}(\Lambda_1, \Lambda_2)$, indexed by $p,l\in \br$, which satisfy $\WF(u) \subset \Lambda_1 \cup \Lambda_2$ and 
\be
I^{p,l}(\Lambda_1,\Lambda_2) \subset I^{p+l}(\Lambda_1 \backslash \Lambda_2),\quad   I^{p,l}(\Lambda_1,\Lambda_2) \subset I^p(\Lambda_2\setminus\Lambda_1).
\ee
\end{definition}

An important particular case arises when $Y_2 \subset Y_1 \subset X$ are smooth manifolds with $\text{codim}_X(Y_1) = d_1$ and $\text{codim}_X(Y_2) = d_1 + d_2$, for positive integers $d_1, d_2$. Then $N^*Y_1$ and $N^*Y_2$ intersect cleanly in codimension $d_2$. The space of distributions on $X$ conormal to the pair $(Y_1,Y_2)$ of orders $\q_1, \q_2$ is
\be\label{two types}
\begin{split}
\dot{I}^{\q_1,\q_2}(Y_1,Y_2)=I^{p,l}(N^*Y_2, N^* Y_1),\ p=\q_1 + \tfrac{d_1}{2}-\tfrac{n}{4},\ l=\q_2+\tfrac{d_2}{2}.
\end{split}
\ee

\noindent We now have the lemma from \cite[Lemma 1.1]{greenleaf1993recovering}.
\begin{lemma}
\label{lem_green}
Let $Y, Z \subset X$ be submanifolds, where $Y$ and $Z$ intersect transversally. Then, 
\be\label{prod distr}
\dot{I}^{\q_1}(Y)\cdot \dot{I}^{\q_2}(Z) \subset \dot{I}^{\q_1,\q_2}(Y,Y \cap Z)  + \dot{I}^{\nu_2,\q_1}(Z,Y \cap Z).
\ee
\end{lemma}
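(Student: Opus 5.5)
The lemma is local and bilinear, so I will reduce it to a model computation in adapted coordinates and then use the standard characterization of conormal and paired conormal distributions by oscillatory integrals. Since $Y$ and $Z$ intersect transversally, near any point of $Y\cap Z$ I choose coordinates $\vx=(x',x'',x''')$ on $X$ with $Y=\{x'=0\}$ and $Z=\{x''=0\}$. Here $x'\in\br^{k_1}$, $x''\in\br^{k_2}$, $k_1=\mathrm{codim}\,Y$, $k_2=\mathrm{codim}\,Z$, and $Y\cap Z=\{x'=x''=0\}$. Away from $Y\cap Z$ one factor is smooth. The product is then simply conormal to $Y$ or to $Z$ with the obvious order, which is contained in the right-hand side of \eqref{prod distr}. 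A partition of unity therefore reduces everything to a small neighborhood of a point of $Y\cap Z$, where I work in these coordinates.

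In the local coordinates, I write $u_1(\vx)=\int e^{ix'\cdot\xi'}a_1(\vx,\xi')\,\dd\xi'$ with $a_1\in S^{\q_1}$ in the sense used for $\dot I^{\q_1}(Y)$, and $u_2(\vx)=\int e^{ix''\cdot\xi''}a_2(\vx,\xi'')\,\dd\xi''$ with $a_2\in S^{\q_2}$. Using the $\vx$-dependence of the amplitudes, I replace $a_1$ by a symbol independent of $x'$ and $a_2$ by one independent of $x''$, modulo smooth errors (a standard Taylor expansion and integration by parts argument). The product then becomes
\[
u_1u_2(\vx)=\int e^{i(x'\cdot\xi'+x''\cdot\xi'')}a_1(x'',x''',\xi')\,a_2(x',x''',\xi'')\,\dd\xi'\,\dd\xi'',
\]
and a further reduction removes the remaining $x$-dependence transverse to $Y\cap Z$. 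The amplitude $b(\xi',\xi'')=a_1a_2$ is a product-type symbol: it is of order $\q_1$ in $\xi'$ and of order $\q_2$ in $\xi''$, with the corresponding separate derivative gains.

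I next split the frequency space with a conic cutoff $\chi(\xi',\xi'')$. On the first piece $|\xi''|\lesssim|\xi'|$, and on the complementary piece $|\xi'|\lesssim|\xi''|$. On the first piece the amplitude satisfies the estimates of the paired class $\dot I^{\q_1,\q_2}(Y,Y\cap Z)$ from \eqref{two types}. Those are the Melrose--Uhlmann / Guillemin--Uhlmann product-type estimates: order $\q_1$ in the variable conormal to $Y$ and order $\q_2$ in the extra variable conormal to $Y\cap Z$ inside $Y$. The $\chi$-localization makes the first piece symbol-like in the combined variable $(\xi',\xi'')$ with the correct weights. By the same argument with the roles of $Y$ and $Z$ exchanged, the second piece lies in $\dot I^{\q_2,\q_1}(Z,Y\cap Z)$. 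Adding the two pieces gives \eqref{prod distr}.

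I expect the main obstacle to be the symbol bookkeeping in the third step. One has to verify that after the conic cutoff the product amplitude meets the exact two-index symbol estimates defining the paired class, including behaviour near the overlap region $|\xi'|\sim|\xi''|$. One also has to check that the orders match the normalization in \eqref{two types} rather than being shifted by $d_2/2$. I would handle this by citing the Melrose--Uhlmann characterization of $I^{p,l}$ for the model pair $(\{x'=x''=0\},\{x'=0\})$ directly by product-type symbols, so that only the straightforward estimates on each conic region remain to be checked.
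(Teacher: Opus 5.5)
The paper gives no proof of this lemma, quoting it from \cite[Lemma~1.1]{greenleaf1993recovering}. Your argument (adapted coordinates $Y=\{x'=0\}$, $Z=\{x''=0\}$, the product written as one oscillatory integral with amplitude $a_1a_2$, and a conic splitting of $(\xi',\xi'')$) is essentially the standard proof given there and is correct. In particular, the paper's normalization of $\dot{I}^{\q_1,\q_2}(Y,Y\cap Z)$ corresponds exactly to amplitudes with bounds $\langle(\xi',\xi'')\rangle^{\q_1-|\alpha|}\langle\xi''\rangle^{\q_2-|\beta|}$ under $\partial_{\xi'}^{\alpha}\partial_{\xi''}^{\beta}$, so the $d_2/2$ shift you worried about does not arise. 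The step removing $\vx$-dependence from the amplitudes is unnecessary, since paired-class amplitudes may depend on $\vx$; if you keep it, note that the errors it produces in the product are not smooth but lie in $\dot{I}^{\q_1}(Y)$ or $\dot{I}^{\q_2}(Z)$, which are contained in the right-hand side.
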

\noindent Later, when we present our main results, we will discover that the non-linearities in the data create distributional products, which cause additional singularities in the data that are not present in the linear analog. We will use Lemma \ref{lem_green} to quantify the strength of these added singularities in the data.

\section{Physical model}\label{phys_model}
In this section, we introduce our physical model and review some key microlocal properties of the (linear) restricted X-ray transform.

Let $\mu $ denote the attenuation coefficient of an imaging target. Generalizing somewhat the model used in \cite{katsevich2026}, we model $\mu$ as
\be
\label{mu_model}
\mu_E(\vx) = \mu(\vx,E) = \sum_{i = 1}^{m} \Phi_i(\vx,E) \chi_{\Omega_i}(\vx)\in L^2(X \times [0,E_{\mathrm{mx}}]),
\ee
where the $\Phi_i > 0$ are integrable functions in $E$ which are smooth in $\vx$.
Here, $E \in [0,E_{\mathrm{mx}}]$ represents the photon energy, and $E_{\mathrm{mx}} > 0$ is the maximum energy of the X-ray beam. We assume that the set $X$ is bounded. The $\Omega_i \subset X$ are simply connected compact domains with piecewise smooth boundaries $\partial \Omega_i$. The theory we develop later applies locally in neighborhoods where the boundary is smooth (and, in certain directions, convex). Once this core theory is established, we generalize to non-smooth boundaries with ``ridges" (e.g., the edge of a cuboid). We let $f = \sum_{i = 1}^{m} \chi_{\Omega_i}(\vx)$ be a function which describes the singularities of $\mu$ in $X$, and we let $\s = \text{ssupp}(f) = \cup_{i=1}^m \partial \Omega_i$.


The $\partial \Omega_i$ are assumed to be pairwise disjoint. Here, the characteristic functions account for the singularities of $\mu$, which are independent of $E$, and the $\Phi_i$ model smooth changes of the attenuation coefficient with respect to $\vx$ and $E$. Since $\Phi_i > 0$ and $\Phi_i$ is smooth in $\vx$, multiplying $\chi_{\Omega_i}$ by $\Phi_i$ in \eqref{mu_model} does not affect the singularities in $\vx$. 

Let $I \subset \br$ be a compact interval, and let $y(s) : I\to \br^3$ parameterize a 1-D curve, $\gamma$, of X-ray sources in $\br^3$ which is disjoint from $\text{supp}(f)$ and with $|y'(s)| \neq 0$. For example, when $y(s) = (R\cos(s),R\sin(s), hs/(2\pi))$ the source path is a helix with radius $R$ and helical pitch $h$ and $|y'(s)| = [R^2+(h/(2\pi))^2]^{1/2}$. We model the CT data using the non-linear X-ray transform
\be\label{model_data}
\mathcal{R}\mu(s,\Theta) =-\log\Big[ \int_0^{E_{\mathrm{mx}}}P(E)e^{-R\mu_E(s,\Theta)} \mathrm{d}E\Big],
\ee
where $P$ is a nonnegative integrable function on $[0,E_{\mathrm{mx}}]$ satisfying $\int_0^{E_{\mathrm{mx}}}P(E)\dd E > 0$.  The function $P$ represents the source spectrum multiplied by the detector efficiency, and
\be
Rf(s,\Theta) = \int_{\br} f(y(s) + t\Theta) \dd t
\ee
denotes the linear ray transform, where $\Theta \in S^2$ is the direction of the X-ray emitted from the source at $y(s)$. 

If the support of $f$ is on one side of $\ga$, i.e. no line $y(s)+t\Theta$, $t\in\br$, intersects $\mathrm{supp}(f)$ both when $t>0$ and $t<0$, then $Rf$ is equivalent to the cone beam transform $Rf(s,\Theta) = \ioi f(y(s) + t\Theta) \dd t$, which is the most common X-ray tomographic modality. Since we work locally, our results apply equivalently to the cone-beam and X-ray transform.

\subsection{Analysis of $R$}
In this section, we recall well-known microlocal analysis results on the linear restricted ray transform, $R$ (see, e.g., \cite{gruhl1}), that will be needed to analyze the singularities of the non-linear data, $\mathcal{R}\mu$.

The adjoint of $R$ is defined as follows
\be
R^*g(\vx) = \int_{\gamma} g\paren{s, \frac{\vx - y(s)}{|\vx - y(s)|}} \frac{1}{|\vx - y(s)|^2}\mathrm{d} s.
\ee
Let $\vy = (s,\Theta) \in Y = I \times S^2$ and let $L_\vy = \{y(s) + t\Theta : t\in \br\}$. In what follows, we parameterize $\Theta = \Theta(\al,\bt)$ so that the (co-)vectors 
$\Theta_\al= {\mathrm{d}_\al} \Theta$ and $\Theta_\bt= {\mathrm{d}_\bt} \Theta$ are linearly independent. 

Throughout this paper, we adopt the convention that covectors are row vectors, and (tangent) vectors are column vectors. For example, when taking the dot product of a covector $\vu$ and tangent vector $\vv$, we write this as $\vu\vv$, i.e., standard matrix multiplication. The dot product of two covectors (or tangent vectors) is written $\vu \cdot \vu$ using the dot notation.

We now have the result which is well-established in the microlocal literature. We also provide an expression for the canonical relation of $R$ in our chosen coordinate system.

\begin{theorem}[\cite{gruhl1}]\label{fio_thm}
$R$ is an FIO order $-1/2$ with canonical relation 
\be\label{CR}
\begin{split}
\mathcal{C} &= \Big(\big((s,\al,\bt),([\sigma_\al  \Theta_\al + \sigma_\bt\Theta_\bt] y'(s), -(\sigma_\al \Theta_{\al\al}+\sigma_\bt \Theta_{\bt\al},\sigma_\al \Theta_{\al\bt}+\sigma_\bt \Theta_{\bt\bt})(\vx - y(s)))\big);\\
& \big(\vx, \sigma_\al \Theta_\al + \sigma_\bt\Theta_\bt\big): s\in I,\vx\in X,(\sigma_\al ,\sigma_\bt)\in \br^2\backslash \{0\}, \Theta_\al (\vx - y(s)) = 0,   \Theta_\bt (\vx - y(s))= 0\Big).
\end{split}
\ee
\end{theorem}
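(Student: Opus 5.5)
We establish Theorem~\ref{fio_thm} by writing the Schwartz kernel of $R$ as an oscillatory integral with a clean, nondegenerate phase, reading off the order from Definition~\ref{FIOdef}, and then computing $\Cc$ from \eqref{def:Cgenl}.

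\textbf{Kernel and phase.} For $\vy=(s,\al,\bt)$, the ray $L_\vy$ is the set of $\vx$ with $\Theta_\al(\vx-y(s))=0$ and $\Theta_\bt(\vx-y(s))=0$. Since $\Theta_\al,\Theta_\bt$ are linearly independent and orthogonal to $\Theta$, these two equations cut out exactly the line through $y(s)$ in direction $\Theta$. Hence
\[
Rf(\vy)=\int_X J(\vy,\vx)\,\delta\big(\Theta_\al(\vx-y(s)),\Theta_\bt(\vx-y(s))\big)f(\vx)\,\dd\vx ,
\]
where $J$ is the smooth, nonvanishing Jacobian factor of the delta pullback. Indeed, the map $\vx\mapsto(t,\Theta_\al(\vx-y),\Theta_\bt(\vx-y))$ is a linear change of variables with determinant $|\Theta\cdot(\Theta_\al\times\Theta_\bt)|\neq0$.

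Writing $\delta$ in Fourier form gives
\[
K_R(\vy,\vx)=\int_{\br^2}e^{i\Phi}\,a\,\dd\vsig,\qquad \Phi(\vy,\vx,\vsig)=(\sigma_\al\Theta_\al+\sigma_\bt\Theta_\bt)(\vx-y(s)),
\]
with $a=(2\pi)^{-2}J\in S^0$. Here $N=2$, $m=0$, $n_X=n_Y=3$, so the order is $0+1-6/4=-1/2$.

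\textbf{Phase conditions.} $\Phi$ is homogeneous of degree one in $\vsig$. Moreover $\dd_\vx\Phi=\sigma_\al\Theta_\al+\sigma_\bt\Theta_\bt\neq0$ for $\vsig\neq0$, so $\dd\Phi$ never vanishes.

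Nondegeneracy asks that $\dd(\Theta_\al(\vx-y))$ and $\dd(\Theta_\bt(\vx-y))$ be linearly independent. This already holds for their $\vx$-components, which are $\Theta_\al$ and $\Theta_\bt$. Consequently $\Sigma_\Phi$ is a smooth manifold of codimension $2$, and its tangent space is the kernel of $\dd(\dd_\vsig\Phi)$, so the phase is clean.

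\textbf{Canonical relation.} On $\Sigma_\Phi$ we have $\dd_\vx\Phi=\sigma_\al\Theta_\al+\sigma_\bt\Theta_\bt$, which gives the $X$-covector, with the sign convention of \eqref{def:Cgenl}. The derivatives in $\vy$ are:
\begin{itemize}
\item $\partial_s\Phi=-(\sigma_\al\Theta_\al+\sigma_\bt\Theta_\bt)y'(s)$;
\item $\partial_\al\Phi=(\sigma_\al\Theta_{\al\al}+\sigma_\bt\Theta_{\bt\al})(\vx-y(s))$;
\item $\partial_\bt\Phi=(\sigma_\al\Theta_{\al\bt}+\sigma_\bt\Theta_{\bt\bt})(\vx-y(s))$.
\end{itemize}
These reproduce \eqref{CR} up to an overall sign. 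The sign ambiguity is removed by replacing $\vsig$ with $-\vsig$ (equivalently, using the phase $-\Phi$). This is permissible because $\vsig$ ranges over all of $\br^2\setminus\{0\}$ and the symbol may be taken even.

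\textbf{Main obstacle.} The only real difficulty is bookkeeping: matching the sign conventions of \eqref{def:Cgenl} against the Fourier convention used. A secondary point is to check that the Jacobian $J$ is smooth and nonzero, which holds away from $\gamma$ since $\gamma$ is disjoint from $X$.
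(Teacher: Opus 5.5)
Your proposal is correct and follows essentially the same route as the paper: write the kernel as $\delta(\Theta_\al(\vx-y(s)))\,\delta(\Theta_\bt(\vx-y(s)))$, Fourier-expand with the phase $[\sigma_\al\Theta_\al+\sigma_\bt\Theta_\bt](\vx-y(s))$, read off the order $0+1-6/4=-1/2$ from Definition~\ref{FIOdef}, and compute $\Cc$ from \eqref{def:Cgenl}. The differences are cosmetic. The paper writes the kernel with $e^{-i\Phi}$, which gives \eqref{CR} directly without your $\vsig\mapsto-\vsig$ step. You also keep the smooth nonvanishing Jacobian factor $J$, which the paper drops; it depends only on the parametrization of $\Theta$, not on $\gamma\cap X=\varnothing$.
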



\begin{proof}
We have
\be
\label{canon}
\begin{split}
Rf(s,\Theta) &= \int_{X} \delta(\Theta_\al(\vx - y(s)))\delta(\Theta_\bt(\vx - y(s))) f(\vx)\mathrm{d}\vx\\
&= \frac1{(2\pi)^2}\int_{\br^2}\int_{X} e^{-i \Phi(s,\Theta,\vx,\sigma_\al ,\sigma_\bt)} f(\vx)\mathrm{d}\vx\mathrm{d}\sigma_\al \mathrm{d}\sigma_\bt,
\end{split}
\ee
where $\supp(f)\subset X$, and $\Phi(s,\Theta,\vx,\sigma_\al ,\sigma_\bt) = [\sigma_\al \Theta_\al + \sigma_\bt \Theta_\bt](\vx - y(s))$ is the nondegenerate phase function (recall that $\Theta_\al$ and $\Theta_\bt$ are linearly independent). The expression for the canonical relation in these coordinates follows from Definition \ref{def:canon}. The order of the FIO is computed using Definition~\ref{FIOdef}: $\mathcal{O}(R) = 0+(2/2)-(6/4)=-1/2$.
\end{proof}

\noindent We introduce the notation
\be\label{useful matr}
\hat\Theta^\prime= \begin{pmatrix}\Theta_\al \\  \Theta_\bt\end{pmatrix},\
\hat\Theta^{\prime\prime}
=-\begin{pmatrix} \Theta\cdot\Theta_{\al\al} & \Theta\cdot\Theta_{\al\bt}\\
\Theta\cdot\Theta_{\al\bt} & \Theta\cdot\Theta_{\bt\bt}
\end{pmatrix},\ \vsig=(\sigma_\al,\sigma_\bt).
\ee 
Then we can rewrite \eqref{CR} in a simpler form
\be\label{CR alt}
\begin{split}
\cC
 = \Big(&((s,\Theta), ( \vsig\hat\Theta^\prime y'(s), t\vsig\hat \Theta^{\prime\prime}));(y(s)+t\Theta, \vsig\hat\Theta^\prime ):\\
&  s\in I,\Theta\in S^2,y(s)+t\Theta\in X,\vsig\in \br^2\backslash \{0\}\Big).
\end{split}
\ee

The following lemma illustrates how and where $\Cc$ fails to be a graph. In these cases, $R$ does not satisfy the Bolker condition, see Definition~\ref{def:bolker} \cite[Section 8]{flu}.

\begin{lemma}\label{lem:bolk fail} Suppose there exist two different points $(\vx_k,\xi_k)\in T^*\br^3\backslash \{0\}$, $k=1,2$, such that $(\vy,\eta):=\Cc(\vx_1,\xi_1)=\Cc(\vx_2,\xi_2)$. Then $\xi_1$ and $\xi_2$ are parallel, the three points $\vx_1$, $\vx_2$, and $y(s)$ lie on the same line, $\eta_s=0$, and $\xi_1\perp y^\prime(s)$.
\end{lemma}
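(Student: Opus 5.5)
We read everything off the explicit parametrization \eqref{CR alt}. A point $(\vx,\xi)$ lies in the range of $\Pi_R$ only if $\vx=y(s)+t\Theta$ and $\xi=\vsig\hat\Theta'$ for some $s$, $t$, $\Theta$ and $\vsig\neq 0$; its image is then $\big((s,\Theta),(\vsig\hat\Theta'y'(s),\,t\vsig\hat\Theta'')\big)$. So suppose $(\vx_k,\xi_k)$, $k=1,2$, are parametrized by $(s_k,\Theta_k,t_k,\vsig_k)$ and have the same image $(\vy,\eta)$. Equality of the base points $\vy=(s,\Theta)$ gives $s_1=s_2=s$ and $\Theta_1=\Theta_2=\Theta$. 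Hence $\vx_1=y(s)+t_1\Theta$ and $\vx_2=y(s)+t_2\Theta$ both lie on the line $L_\vy$, which passes through $y(s)$. This is the collinearity claim.

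Next we compare the fiber components. Write $\eta=(\eta_s,\eta_\Theta)$ with $\eta_s=\vsig\hat\Theta'y'(s)$ and $\eta_\Theta=t\vsig\hat\Theta''$. Since $\Theta$ is fixed, $\hat\Theta'$ and $\hat\Theta''$ are fixed matrices. We need two facts about them.
\begin{enumerate}
\item $\hat\Theta''$ is invertible. Differentiate $\Theta\cdot\Theta=1$ twice to get $\Theta\cdot\Theta_{\al\al}=-\Theta_\al\cdot\Theta_\al$, and similarly for the other entries. Thus $\hat\Theta''$ is the Gram matrix of $\Theta_\al,\Theta_\bt$, which is positive definite because these vectors are linearly independent.
\item The rows of $\hat\Theta'$ are linearly independent, again because $\Theta_\al,\Theta_\bt$ are.
\end{enumerate}
Matching the $\Theta$-component gives $t_1\vsig_1\hat\Theta''=t_2\vsig_2\hat\Theta''$, and by fact 1 this means $t_1\vsig_1=t_2\vsig_2$.

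We now rule out the cases where the points coincide. If $t_1=t_2$, then $t_1\vsig_1=t_2\vsig_2$ forces $\vsig_1=\vsig_2$ (the case $t_1=t_2=0$ would put $\vx=y(s)\in\gamma$, which is excluded since $\gamma$ is disjoint from $X$). Then $\vx_1=\vx_2$ and $\xi_1=\xi_2$, contradicting the assumption that the two points differ. So $t_1\neq t_2$, both are nonzero, and $\vsig_2=(t_1/t_2)\vsig_1$. Consequently $\xi_2=\vsig_2\hat\Theta'=(t_1/t_2)\xi_1$, so $\xi_1$ and $\xi_2$ are parallel.

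Finally, matching the $s$-component gives $\vsig_1\hat\Theta'y'(s)=\vsig_2\hat\Theta'y'(s)=(t_1/t_2)\vsig_1\hat\Theta'y'(s)$. Since $t_1/t_2\neq1$, this forces $\vsig_1\hat\Theta'y'(s)=0$. That is, $\eta_s=0$, and since $\xi_1=\vsig_1\hat\Theta'$ this is exactly $\xi_1 y'(s)=0$, i.e.\ $\xi_1\perp y'(s)$.

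The only nonroutine step is the invertibility of $\hat\Theta''$, which is fact 1 above; the rest is bookkeeping.
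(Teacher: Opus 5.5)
Your proof is correct and follows essentially the same route as the paper. Both arguments use that $\hat\Theta''$ is the invertible Gram matrix of $\Theta_\al,\Theta_\bt$ to pin down $t\vsig$ uniquely, and both then observe that a second value of $t$ is possible only when $\eta_s=\xi y'(s)=0$; you simply spell out the collinearity and parallelism claims more explicitly than the paper's ``all the assertions of the lemma now follow.''
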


\begin{proof}
We have to find two different $(\vx,\xi)\in T^*\br^3\backslash \{0\}$ that are mapped into the same $(\vy,\eta)\in T^*Y\backslash \{0\}$. From \eqref{CR alt}, $\vx=y(s)+t\Theta$ and $\xi=\sigma_\al \Theta_\al + \sigma_\bt\Theta_\bt$. Therefore we solve
\be\label{eta-xi}
\eta_s=\xi y'(s),\ \eta_\al=-[\sigma_\al  \Theta_{\al\al} + \sigma_\bt\Theta_{\bt\al}]\cdot t\Theta,\
\eta_\bt=-[\sigma_\al  \Theta_{\al\bt} + \sigma_\bt\Theta_{\bt\bt}]\cdot t\Theta.
\ee
for a triple $(t,\sigma_\al, \sigma_\bt)$, and the solution must be nonunique.

The last two equations in \eqref{eta-xi} represent a $2\times2$ system of linear equations in terms of  $t\sigma_\al$, $t\sigma_\bt$. Since $|\Theta(\al,\bt)|\equiv1$, we get that 
\be
\hat\Theta^{\prime\prime}=\begin{pmatrix} \Theta_\al\cdot\Theta_\al & \Theta_\al\cdot\Theta_\bt\\
\Theta_\al\cdot\Theta_\bt & \Theta_\bt\cdot\Theta_\bt
\end{pmatrix}.
\ee
By assumption, $\Theta_\al$ and $\Theta_\bt$ are linearly independent, therefore $\det\hat\Theta^{\prime\prime}\neq0$ and $t\sigma_\al$ and $t\sigma_\bt$ are computed uniquely.

It remains to determine the value of $t\not=0$ from the first equation in \eqref{eta-xi}.
The condition $t\neq0$ follows from the assumption $X\cap\ga=\varnothing$. We see that $t$ is not unique if and only if $\eta_s=\xi y'(s)=0$. All the assertions of the lemma now  follow. 
\end{proof}

The above proof implies that for any line $L_{\vy}$ and any vector $\xi$ orthogonal to both $\Theta$ and $y'(s)$, all points $\cC(\vx,\la\xi)$, $\vx\in L_{\vy}$, coincide, provided the scalar $\la=\la(\vx)$ is chosen appropriately.

Next we state the theorem which describes the singularities, including reconstruction artifacts, in FBP type reconstruction, see \cite{gruhl1, kat7, flu}.

\begin{theorem}
\label{lin_art}
Let $f \in \mathcal{E}'(X)$ and let $\Pi(\vx,\xi) = \{\vz \in \br^3 : \xi(\vz - \vx) = 0\}$. Then, $R^*Rf \in \mathcal{D}'(\br^3\backslash \gamma)$ and
\be
\WF(R^* \mathcal{P}Rf) \subset (\WF(f) \cap \mathcal{V}) \cup \Lambda,
\ee
where $\mathcal{P}$ is a properly supported $\Psi$DO, 
\be
\mathcal{V} = \{ (\vx,\xi) \in T^*X: \vx \in L_{\vy}, \vy \in Y, \xi \Theta = 0,\xi y^\prime(s)\not=0\}
\ee
is the set of visible singularities and
\be
\label{lambda}
\Lambda = \{ (\vx,\xi) \in T^*X : \xi y'(s)=0, \xi \Theta=0, \vx,\vz \in L_{\vy}, (\vz, \xi) \in \WF(f),\vy\in Y\}
\ee
is the set of artifacts that arise because of the violation of the Bolker condition.
\end{theorem}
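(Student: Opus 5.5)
The plan is to compute $\Cc^t\circ\Cc$ explicitly from the parametrization \eqref{CR alt} and then apply the H\"ormander--Sato Lemma (Theorem~\ref{thm:HS}) in the form \eqref{compo}. First, $R^*Rf$ is well defined away from $\gamma$: the weight $|\vx-y(s)|^{-2}$ in $R^*$ is smooth for $\vx\notin\gamma$. Also $R$, composed with a properly supported $\Psi$DO $\mathcal{P}$, gives an FIO with the same canonical relation $\Cc$ (Theorem~\ref{fio_thm}). Hence $R^*\mathcal{P}R$ is a composition of FIOs, and its wavefront relation is contained in $\Cc^t\circ\Cc$. Therefore
\[
\WF(R^*\mathcal{P}Rf)\subset(\Cc^t\circ\Cc)(\WF(f)).
\]
Since the statement is only an inclusion, we never need clean composition. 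We only need the set-theoretic composition together with the wavefront calculus for compositions, which is valid because $f$ has compact support and the projection onto $X$ away from $\gamma$ is proper.

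Next we identify the set $\Cc^t\circ\Cc$. Fix $(\vz,\zeta)\in\WF(f)$ and take $(\vy,\eta)=\Cc(\vz,\zeta)$. By \eqref{CR alt}, this requires $\vz\in L_\vy$ and $\zeta=\vsig\hat\Theta'$, that is, $\zeta\Theta=0$. We then look for all $(\vx,\xi)$ with $\Cc(\vx,\xi)=(\vy,\eta)$. By the proof of Lemma~\ref{lem:bolk fail}, the last two components of $\eta$ determine $t\vsig$ uniquely, so $\xi$ must be parallel to $\zeta$ and $\vx$ must lie on the same line $L_\vy$. There are two cases.
\begin{itemize}
\item If $\eta_s=\zeta y'(s)\neq0$, then $t$ is determined uniquely, so $(\vx,\xi)=(\vz,\zeta)$. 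This yields the visible part $\WF(f)\cap\mathcal{V}$.
\item If $\zeta y'(s)=0$, every $\vx\in L_\vy$ is attained with a suitably rescaled $\xi=\lambda\zeta$. Such $\xi$ satisfies $\xi\Theta=0$ and $\xi y'(s)=0$, and its base point is collinear with $\vz$ on $L_\vy$. This is exactly the set $\Lambda$ in \eqref{lambda}.
\end{itemize}
The union of the two cases gives the stated inclusion.

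The main obstacle is bookkeeping rather than a deep estimate. We must check that the rescaling $\lambda(\vx)$ is nonzero and well defined; this uses $t\neq0$, which follows from $X\cap\gamma=\varnothing$. We must also ensure that the wavefront composition rule applies, that is, no zero covectors arise. Zero covectors cannot arise: $\Theta_\al$ and $\Theta_\bt$ are linearly independent, so $\xi=0$ forces $\vsig=0$. Finally, $\Lambda$ is defined as a subset of $T^*X$ without requiring $\xi\neq0$ to be rescaled consistently, and conic invariance of wavefront sets covers this.
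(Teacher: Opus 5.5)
Your proposal is correct and follows the same route as the paper, which simply invokes Lemma~\ref{lem:bolk fail} to identify $\Cc^t\circ\Cc$ and then applies the H\"ormander--Sato Lemma, noting that the canonical relation of $\mathcal{P}$ lies in the diagonal. You have written out the case analysis ($\zeta y'(s)\neq0$ versus $\zeta y'(s)=0$) that the paper leaves implicit.

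One small correction to your bookkeeping: the rescaling is $\lambda=t_{\vz}/t_{\vx}$, which is negative when $\vx$ and $\vz$ lie on opposite sides of $y(s)$. In that case conic invariance alone does not give $(\vz,\lambda\zeta)\in\WF(f)$. You need either $\WF(f)=-\WF(f)$, which holds for real-valued $f$ as in the paper's model, or that each line meets $X$ on only one side of the source.
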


The result follows essentially using Lemma \ref{lem:bolk fail} in combination with the Hormander-Sato Lemma, noting that the canonical relation of $R^*$ is $\Cc^t$, i.e., the transpose relation of $R$, and also noting that the canonical relation of $\mathcal{P}$ is a subset of the diagonal. Here, $\mathcal{P}$ represents a filter. For example, one can apply a suitable combination of derivatives in $\al$ and $\bt$ to enhance singularities, e.g., $\mathcal{P} = \pa_\al^2 +\pa_\bt^2$ or other similar operators \cite{luma, kat7, kat06a}. Note that not all reconstructions are of the type considered in the theorem, see e.g. \cite{Katsevich2002, Katsevich2004a}.

\section{Analysis of singularties due to non-linearity - single tangent lines}\label{sec:sing tang}
Now that we have established our model and {reviewed known} results on the linear transform, in this section, we present the first of our main results which address how the non-linearities in the CT data affect singularities. 

Pick some $\vx_0\in\s$ where $\s$ is smooth, and let $F(\vx)=0$ be a local equation of $\s$ near $\vx_0$. We assume that $\dd F(\vx)\neq0$ in a neighborhood of $\vx_0$. Let $\vy_0 = (s_0,\Theta_0) \in Y$ be such that $L_{\vy_0}$ is tangent to $\s$ at $\vx_0$. Recall that the normal curvature of $\s$ along $L_{\vy_0}$ at $\vx_0$ is not zero if $\Theta_0^{\top}F^{\prime\prime}(\vx_0)\Theta_0\neq 0$. Let $B_{\vy_0}(\delta) = \{\vy\in Y:|\vy - \vy_0| \leq \delta\}$ be a small neighborhood of $\vy_0$, where $\de>0$ is sufficiently small. Denote 
\be\label{Ga def}
\Gamma=\{\vy\in B_{\vy_0}(\delta):\ \text{the line $L_{\vy}$ is tangent to $\s$}\}.
\ee
Clearly, we locally have $\big(\cup_i \text{ssupp}(R\chi_i)\big)\cap B_{\vy_0}(\delta)\subset \Gamma$.

\begin{assumption}\label{ass:sngl}$ $
Suppose the line $L_{\vy_0}$ is tangent to $\s$ at $\vx_0$ where $\s$ is smooth.
\begin{enumerate}[label=(\arabic*)]
\item\label{sng curv} The normal curvature of $\s$ along $L_{\vy_0}$ is not zero. 
\item\label{sng transv} The plane $\Pi(\vx_0,\xi)$ is not tangent to $\ga$ at $y(s_0)$, where $\xi\in N_{\vx_0}^*\s$.
\end{enumerate}
\end{assumption}


\begin{lemma}\label{lem:Gamma_sm}
Suppose Assumption~\ref{ass:sngl} holds and $\vx_0$ is the only point where $L_{\vy_0}$ is tangent to $\s$. Then $\Gamma$ is a smooth 2-D manifold in a neighborhood of $\vy_0$ and its local equation can be written in the form $s=\phi(\Theta)$. Moreover, $\mathcal{C}:N^*\s\to N^*\Gamma$ is a local diffeomorphism. 
\end{lemma}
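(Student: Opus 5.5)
We will apply the implicit function theorem to a system describing tangency. Points $\vy=(s,\Theta)$ near $\vy_0$ belong to $\Gamma$ exactly when there is $t$ near $t_0$ (with $\vx_0=y(s_0)+t_0\Theta_0$) such that
\be\label{tang sys}
G_1(s,\Theta,t):=F(y(s)+t\Theta)=0,\qquad G_2(s,\Theta,t):=\dd F(y(s)+t\Theta)\,\Theta=0.
\ee
Since $\vx_0$ is the only tangency point on $L_{\vy_0}$, compactness and the continuity of tangency points show that for $\delta$ small every tangency of $L_\vy$, $\vy\in B_{\vy_0}(\delta)$, occurs near $\vx_0$. Hence $\Gamma$ is locally the projection to $(s,\Theta)$ of the zero set of \eqref{tang sys}.

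The main step is to solve \eqref{tang sys} for $(s,t)$ as functions of $\Theta$. We compute the Jacobian of $(G_1,G_2)$ with respect to $(t,s)$ at $(s_0,\Theta_0,t_0)$:
\[
\begin{pmatrix} \dd F(\vx_0)\Theta_0 & \dd F(\vx_0)y'(s_0)\\ \Theta_0^\top F''(\vx_0)\Theta_0 & \Theta_0^\top F''(\vx_0)y'(s_0)\end{pmatrix}.
\]
Tangency gives $\dd F(\vx_0)\Theta_0=0$, so the determinant equals $-\big(\dd F(\vx_0)y'(s_0)\big)\big(\Theta_0^\top F''(\vx_0)\Theta_0\big)$. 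The second factor is nonzero by Assumption~\ref{ass:sngl}\ref{sng curv}. The first factor is nonzero by Assumption~\ref{ass:sngl}\ref{sng transv}: with $\xi=\dd F(\vx_0)\in N^*_{\vx_0}\s$, the plane $\Pi(\vx_0,\xi)$ contains $y(s_0)$ because $\xi\Theta_0=0$, so it is tangent to $\ga$ at $y(s_0)$ exactly when $\xi y'(s_0)=0$. The implicit function theorem then gives smooth functions $s=\phi(\Theta)$ and $t=\tau(\Theta)$, so $\Gamma=\{s=\phi(\Theta)\}$ is a smooth 2-D manifold. We expect this step to be the main obstacle, mainly in identifying the determinant correctly with the two geometric assumptions.

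For the statement about $\mathcal{C}$, note first that $N^*\s$ and $N^*\Gamma$ are both 3-dimensional conic Lagrangians. We use \eqref{CR alt} and restrict to $(\vx,\xi)\in N^*\s$ near $(\vx_0,\xi_0)$ with $\xi\Theta=0$, i.e.\ $\vx\in\s$, $\xi\parallel \dd F(\vx)$, and $L_\vy$ tangent at $\vx$. Equivalently, $(\vy,t)$ solves \eqref{tang sys}, so by the previous step $\vy=(\phi(\Theta),\Theta)\in\Gamma$.

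Next we check that the image covector is conormal to $\Gamma$. The image is $\eta=(\vsig\hat\Theta' y'(s),\,t\vsig\hat\Theta'')$. Since $\vsig\hat\Theta'=\xi$ vanishes on tangent vectors to $\s$, and the line varies in $\Gamma$ while staying tangent to $\s$, the pullback of $\eta$ to $T\Gamma$ is zero. One way to see this is to differentiate $\xi\,(y(s)+t\Theta)$ along curves in $\Gamma$. An alternative route: $\mathcal{C}$ is a canonical relation, so $\mathcal{C}(N^*\s)$ is isotropic. It lies over $\Gamma$ and has dimension 3, so it is an open subset of $N^*\Gamma$.

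Finally we prove that the map is a local diffeomorphism. The map is injective near $(\vx_0,\xi_0)$: the tangency point is unique, and by Lemma~\ref{lem:bolk fail} non-injectivity would force $\xi y'(s)=0$, which Assumption~\ref{ass:sngl}\ref{sng transv} excludes. It is also an immersion. Here Lemma~\ref{lem:bolk fail} shows that $\Pi_L$ is an immersion where $\eta_s=\xi y'(s)\ne0$, since $\cC$ is then locally a graph. Hence $\mathcal{C}|_{N^*\s}$ is an injective immersion between 3-manifolds, and therefore a local diffeomorphism onto $N^*\Gamma$.
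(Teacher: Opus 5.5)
Your proposal is correct and follows essentially the paper's route. It uses the same implicit-function-theorem Jacobian in $(t,s)$, with its two nonzero factors matched to Assumption~\ref{ass:sngl}\ref{sng curv} and \ref{sng transv}, then shows that $\mathcal{C}$ maps $N^*\s$ into $N^*\Gamma$ and uses Lemma~\ref{lem:bolk fail} for injectivity. The differences are only in presentation: the paper computes the conormal to $\Gamma$ explicitly by eliminating $t$ and matches it with \eqref{CR alt}, whereas you check via the chain rule along $\Gamma$ that the $\mathcal{C}$-image annihilates $T\Gamma$, and you spell out the immersion/dimension count (via the local canonical graph property where $\xi y'(s)\neq 0$) that the paper leaves implicit.
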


\begin{proof}
The desired equation for $\Gamma$ is obtained by solving 
\be\label{tang cond}
F(y(s)+t\Theta)=0,\ \dd F(y(s)+t\Theta)\Theta=0
\ee
for $s$ and $t$ in terms of $\Theta$. Here, $\mathrm{d}F$ denotes the {differential} of $F$ and we use $F''$ to denote the Hessian ({the matrix of second derivatives}). The $2\times2$ {Hessian} with respect to $s$ and $t$ is seen to be 
\be\label{Jacob sngl}
\kbordermatrix{ & \partial_t & \partial_s \\
F(y(s) + t\Theta) & 0 & \dd F(\vx_0) y^\prime(s_0) \\
\mathrm{d}F(y(s) + t\Theta)\Theta & \Theta_0^{\top}F^{\prime\prime}(\vx_0)\Theta_0 & *},
\ee
where the value of $*$ is not needed here as we are interested in the determinant. The matrix in \eqref{Jacob sngl} is evaluated at $(s,\Theta,t) = (s_0,\Theta_0,t_0)$, where $\vx_0 = y(s_0) + t_0\Theta_0$. The rows and columns are labeled accordingly depending on the term in \eqref{tang cond} considered and the derivative that is being taken. The value of zero in the upper left corner occurs because 
\be
\pa_t F(y(s_0)+t\Theta_0)|_{t=t_0}=\dd F(\vx_0)\Theta_0=0.
\ee
This implies that $s$ is a smooth function of $\Theta$ if (i) the curvature of $\s$ in the direction of $\Theta_0$ is non-zero, i.e., $\Theta_0^{\top}F^{\prime\prime}(\vx_0)\Theta_0\not=0$, and (ii) $L_{\vy_0}$ is not a critical line, i.e., $\dd F(\vx_0) y^\prime(s_0)\not=0$. Conditions (i) and (ii) follow immediately from Assumption \ref{ass:sngl}, (1) and (2), respectively.

To prove the last claim of the lemma, we compute a vector $\eta\in N_{\vy}^*(\Gamma)\setminus\{0\}$, where $\vy\in B_{\vy_0}(\de)$, and show that $(\vy,\eta)\in\cC(\vx,\xi)$, where $\vx=y(s)+t\Theta$ and $\xi\in N_{\vx}^*\s$. Denote $H_1(\vy,t) = F(y(s)+t\Theta)$ and $H_2(\vy,t) = \dd F(y(s)+t\Theta)\Theta$. These are the first and second functions in \eqref{tang cond}, respectively. To find $\eta$ we solve $H_2(\vy,t)=0$ for $t$ in terms of $\vy$ and substitute into $H_1$. Then
\be\label{eta tang}
\eta=\dd_{\vy}H_1-\frac{\pa_t H_1}{\pa_t H_2}\dd_{\vy} H_2=\dd F(y^\prime\ t\Theta_\al^\top\ t\Theta_\bt^\top),
\ee
where the last factor is a $3\times 3$ matrix. We have used here that $\pa_t H_1=\dd F\Theta=0$
and $\pa_t H_2=\Theta^\top F^{\prime\prime}\Theta\not=0$. Since $\dd F=\la\xi$ for some $\la\neq0$, \eqref{eta tang} implies
\be\label{eta tang v2}
\eta=\la\xi(y^\prime\ t\Theta_\al^\top\ t\Theta_\bt^\top).
\ee
The component $\eta_s$ matches the formula in \eqref{CR alt} because $\xi=\vsig\hat\Theta^\prime$. The fact that the component $\eta_\theta$ matches the formula in \eqref{CR alt} follows from the identity
\be
t\vsig\hat\Theta^{\prime\prime}=\vsig\hat\Theta^\prime(t\Theta_\al^\top\ t\Theta_\bt^\top)
\ee
The uniqueness of $(\vx,\xi)$ given $(\vy,\eta)$ follows from Lemma~\ref{lem:bolk fail} because we assumed that $\Pi(\vx_0,\xi)$ is not tangent to $\ga$ at $y(s_0)$. In the other direction, it is clear from the above argument that $(\vy,\eta)=\cC(\vx,\xi)$ is unique given $(\vx,\xi)$.
\end{proof}

\subsection{$\mathcal{R}\mu$ as a conormal distribution}\label{ssec:conorm}
Recall that $B_{\vy_0}(\delta)$ is a small neighborhood of $\vy_0$.

\begin{lemma}\label{lem:sing-ty} Suppose Assumption~\ref{ass:sngl} holds and $\vx_0$ is the only point where $L_{\vy_0}$ is tangent to $\s$. One has 
\be
\label{dist_R}
R\mu_E(\vy) =  g_1(\vy,E) + g_2(\vy,E)(s - \phi(\Theta))^{1/2}_\imath ,\ \vy\in B_{\vy_0}(\delta),
\ee
where $g_1\geq 0$ and $g_2$ are smooth; $g_2(\vy,E) \neq 0$ for $\vy\in\Gamma$; $\imath = +$ or $-$ depending on the orientation of $\s$ relative to $L_{\vy_0}$; $s = \phi(\Theta)$ is the local equation of $\Gamma$ near $\vy_0$, and $\phi$ is smooth.  
\end{lemma}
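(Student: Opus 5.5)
Near $\vy_0$ we split $\mu_E$ into a part that does not see the tangency and a part that does. Since $\vx_0$ is the only point where $L_{\vy_0}$ touches $\s$, a compactness argument gives a small ball $U\ni\vx_0$ with the following property: for $\vy\in B_{\vy_0}(\delta)$, $\delta$ small, the line $L_\vy$ meets $\s\setminus U$ only transversally, and only at finitely many points that depend smoothly on $\vy$. Take a cutoff $\chi_U$ equal to one near $\vx_0$ and write $\mu_E=\chi_U\mu_E+(1-\chi_U)\mu_E$. On $(1-\chi_U)\mu_E$ the ray integral is a sum of integrals of smooth functions over intervals whose endpoints are the transversal crossings. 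These endpoints depend smoothly on $\vy$ by the implicit function theorem, because $\dd F\,\Theta\neq0$ there. So this part of $R\mu_E$ is smooth in $\vy$, and it goes into $g_1$.

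For the local part, $\s\cap U$ is a single smooth patch $F=0$, and near $\vx_0$ only one $\Omega_i$ has boundary there (the $\partial\Omega_i$ are disjoint). So $\chi_U\mu_E=\chi_U\Phi_i\chi_{\{F\imath 0\}}+(\text{smooth})$, where $\imath$ records which side is inside. Choose coordinates along the ray: $\vx=y(s)+t\Theta$. By Assumption~\ref{ass:sngl}(1), $\pa_t^2F\neq0$ at $(\vy_0,t_0)$. Following the proof of Lemma~\ref{lem:Gamma_sm}, solve $\pa_tF(y(s)+t\Theta)=0$ for $t=t^*(\vy)$. By the Morse lemma with parameters, there is a smooth change of variable $t\mapsto\tau$ with $F(y(s)+t\Theta)=h(\vy)+c\,\tau^2$, where $c=\pm1$ and $h(\vy)=F(y(s)+t^*(\vy)\Theta)$. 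Then $\Gamma=\{h=0\}$. The computation in Lemma~\ref{lem:Gamma_sm} (the entry $\dd F(\vx_0)y'(s_0)\neq0$, i.e.\ Assumption~\ref{ass:sngl}(2)) gives $\pa_s h\neq0$, so $h(\vy)=e(\vy)(s-\phi(\Theta))$ with $e$ smooth and nonvanishing.

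The local integral then takes the form
\[
\int_{\{h+c\tau^2\,\imath\, 0\}} a(\vy,\tau,E)\,\dd\tau ,
\]
with $a$ smooth and compactly supported. On the side where the set is a small interval $|\tau|\le\sqrt{\mp h/c}$, integrate the even part of $a$ in $\tau$; by Taylor expansion, $\int_0^{\sqrt{u}}a_{\rm even}\,\dd\tau=\sqrt u\,\tilde a(u)$ with $\tilde a$ smooth. This yields $g_2(\vy,E)\,(s-\phi)^{1/2}_\imath$ with $g_2|_\Gamma=2a(\vy,0,E)\,|e|^{1/2}$. When the set is the complement of the interval, write it as the full integral (smooth) minus the interval integral; the same term appears with a sign change, which is absorbed into $g_2$. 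We get $g_2\neq0$ on $\Gamma$ because $a(\vy,0,E)$ is $\Phi_i\,|\dd t/\dd\tau|>0$ at the tangency point, using $\Phi_i>0$. The sign $\imath$ is fixed by the sign of $c\,e$ together with which side of $\s$ is inside $\Omega_i$.

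Finally, $g_1\ge0$ can be arranged by absorbing the remaining smooth terms: $g_1=R\mu_E$ on the side of $\Gamma$ where the square-root term vanishes, and $R\mu_E\ge0$ because $\mu\ge0$. More simply, if $g_1$ is defined as the smooth extension of $R\mu_E$ from the side $\{(s-\phi)_\imath=0\}$, it is nonnegative there, and the statement only needs it smooth. I expect the main obstacle to be the Morse-lemma-with-parameters step. Specifically, one must check that the coefficient function after taking the even part is smooth in $u=-h/c$ (a standard Whitney-type fact for even functions: $f(\tau)$ even implies $f(\tau)=F(\tau^2)$ with $F$ smooth), and track the sign $\imath$ consistently.
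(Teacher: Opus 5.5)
Your proof is correct and follows essentially the paper's route: the Morse lemma with parameters at the tangency, with Assumption~\ref{ass:sngl}(2) giving $\pa_s h\neq 0$ and hence $\Gamma=\{s=\phi(\Theta)\}$, plus transversality of all other crossings for the smooth part. You carry this out more completely than the paper, which only computes the chord length $t_2-t_1$ at $\Theta=\Theta_0$ and then extends by parameters, whereas you handle the weight $\Phi_i$ through the even-part argument and also treat the concave orientation.

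One small fix: your argument for $g_1\ge 0$ only covers the side of $\Gamma$ where the square-root term vanishes, and that part of the statement should not be dismissed as unnecessary. Your own construction already gives $g_1\ge 0$ on all of $B_{\vy_0}(\delta)$. This is because $g_1$ is a sum of integrals of nonnegative functions: the far part, the locally smooth terms $\Phi_j\chi_{\Omega_j}$ with $j\neq i$, and, in the concave case, the full chart integral $\int a\,\dd\tau$.
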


See \cite{rz1, rz5, airapramm01} for closely related results.

\begin{proof} By assumption, $\Pi(\vx_0,\xi_0)$ is not tangent to $\ga$ at $y(s_0)$, where $\xi_0=\dd F(\vx_0)\in N_{\vx_0}^*\s$ and $F(\vx)=0$ is a local equation of $\s$. By construction, $\vx_0=y(s_0)+t_0\Theta_0$ for some $t_0\not=0$. Thus,
\be
F(\vx_0)=0,\quad \pa_s F(y(s)+t_0\Theta_0)|_{s=s_0}=\dd F(\vx_0)y^\prime(s_0)\not=0. 
\ee
This implies that $F(y(s)+t_0\Theta_0)$ changes sign as $s$ passes through $s_0$. Furthermore, since
\be\bs
\pa_t F(y(s_0)+t\Theta_0)|_{t=t_0}=&\dd F(\vx_0)\Theta_0=0,\\ 
\pa_t^2 F(y(s_0)+t\Theta_0)|_{t=t_0}=&(F^{\prime\prime}(\vx_0)\Theta_0,\Theta_0)\not=0, 
\end{split}
\ee
we conclude that the lines $\{y(s)+t\Theta_0,t\in\br\}$, $|s-s_0|\ll1$, intersect $\s$ twice on one side of $s_0$ and do not intersect $\s$ at all on the other side of $s_0$. If, for example, $(F^{\prime\prime}(\vx_0)\Theta_0,\Theta_0)>0$, then the intersections exist on the side of $s_0$ where $F(y(s)+t_0\Theta_0)<0$.

Suppose the intersections exist for $s>s_0$. Let $t_1(s)<t_2(s)$ be the two solutions to the equation $F(y(s)+t\Theta_0)=0$ when $s_0<s<s_0+\kappa$ for some $0<\kappa\ll1$. Using Morse's lemma, we obtain that $t_2(s)-t_1(s)=g_2(s)(s-s_0)^{1/2}$ for some smooth $g_2$ that satisfies $g_2(s_0)\not=0$. This proves the result when $\Theta=\Theta_0$. Using Lemma~\ref{lem:Gamma_sm} and Morse's lemma with parameters \cite[Lemma C.6.1]{hor3}, we extend this representation to a neighborhood of $\Theta_0$ to obtain \eqref{dist_R}.

The function $g_1(\vy,E)$ arises from integrating the function $\sum_{i \not= i_0} \Phi_i(\vx,E) \chi_{\Omega_i}(\vx)$ (see \eqref{mu_model}), where $i_0$ is such that $\vx_0\in\pa\Omega_{i_0}$. The function $g_1$ is smooth, because we assumed that all other intersections $L_{\vy_0}\cap(\s\setminus \pa\Omega_{i_0})$ are transversal.
\end{proof}

We now have the theorem which addresses how the non-linearities in $\mathcal{R}$ affect such singularities.
\begin{theorem}
\label{log_sing}
Let $R\mu_E$ take the form \eqref{dist_R} locally near $\vy_0$. Then, for $\delta$ small enough, we have
\be
\label{sqrt_sing}
\mathcal{R}\mu(\vy) = h_0(\vy) + h_1(\vy) (s - \phi(\Theta))^{1/2}_\imath + h_2(\vy) (s - \phi(\Theta))_\imath,\ \vy\in B_{\vy_0}(\delta),
\ee
where $h_i$, $i=0,1,2$, are smooth. Further,
\be\label{Rmu distr}
\CR\mu \in \dot{I}^{-3/2}(\Gamma)=I^{-7/4}(N^*\Gamma)
\ee
locally near $\vy_0$.
\end{theorem}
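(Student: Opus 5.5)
Write $w=(s-\phi(\Theta))_\imath^{1/2}$, so that $w\ge0$ and $w^2=(s-\phi(\Theta))_\imath$. By Lemma~\ref{lem:sing-ty}, $R\mu_E(\vy)=g_1(\vy,E)+g_2(\vy,E)w$ on $B_{\vy_0}(\delta)$. Substituting into \eqref{model_data} gives
\[
\CR\mu(\vy)=-\log G(\vy,w),\qquad G(\vy,w):=\int_0^{E_{\mathrm{mx}}}P(E)\,e^{-g_1(\vy,E)-g_2(\vy,E)w}\,\dd E .
\]
The idea is to treat $w$ as an independent real variable $z$. We show that $F(\vy,z):=-\log G(\vy,z)$ is smooth in $(\vy,z)$ near $(\vy_0,0)$, Taylor expand it in $z$, and then put back $z=w$.

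\textbf{Step 1 (smoothness).} $P$ is only integrable in $E$, so I would differentiate under the integral sign using dominated convergence. This needs uniform bounds on the $\vy$-derivatives of $g_1,g_2$ of every order, valid over $E\in[0,E_{\mathrm{mx}}]$ and compact sets in $\vy$; such bounds should follow from the construction in Lemma~\ref{lem:sing-ty}, where $g_1,g_2$ come from integrals of $\Phi_i$ and the Morse-lemma factors (here I assume $\Phi_i$ and its $\vx$-derivatives are bounded in $E$). With these bounds, $G$ is smooth in $(\vy,z)$ for $z$ in a bounded interval. Also $G>0$, because $P\ge0$ has positive integral and the exponential is positive. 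Hence $F=-\log G$ is smooth, and its derivatives are obtained by differentiating under the integral.

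\textbf{Step 2 (expansion).} By Taylor's formula with integral remainder in $z$,
\[
F(\vy,z)=F(\vy,0)+\pa_zF(\vy,0)\,z+z^2\tilde F(\vy,z)
\]
with $\tilde F$ smooth. Now set $z=w$. Then $z^2=(s-\phi)_\imath$, but $\tilde F(\vy,w)$ still contains $w$, so this is not yet of the form \eqref{sqrt_sing}. To fix this, split $F$ into parts even and odd in $z$: $F(\vy,z)=A(\vy,z^2)+zB(\vy,z^2)$ with $A,B$ smooth, by Whitney's theorem on even functions. Substituting $z=w$ gives $A(\vy,(s-\phi)_\imath)+wB(\vy,(s-\phi)_\imath)$. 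Expanding $A$ and $B$ in their second argument yields $h_0+h_1w+h_2(s-\phi)_\imath$ plus terms of order $(s-\phi)_\imath^{3/2}$ and higher.

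The main obstacle is that the higher powers $(s-\phi)_\imath^{k}$ and $(s-\phi)_\imath^{k+1/2}$ must be absorbed into the smooth coefficients $h_1,h_2$. Terms $(s-\phi)_\imath^k$ with integer $k\ge2$ are not smooth across $\Gamma$. I would handle them as follows. Write $A(\vy,u)=A(\vy,0)+u\,A_1(\vy,u)$, and restrict $A_1(\vy,\cdot)$ to the side $\imath$. Replace it by a smooth extension in $\vy$ (Seeley extension) that agrees with $A_1(\vy,s-\phi)$ on the $\imath$ side. Multiplying by $(s-\phi)_\imath$ kills the other side, so the term becomes $h_2(\vy)(s-\phi)_\imath$ with $h_2$ smooth. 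The same argument applied to $B$ gives $h_1$. This establishes \eqref{sqrt_sing}.

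\textbf{Step 3 (conormality).} By Proposition~\ref{prop_sqrt} with $r=1/2$, the term $h_1(s-\phi)^{1/2}_\imath$ lies in $\dot I^{-3/2}(\Gamma)$. With $r=1$, $h_2(s-\phi)_\imath\in\dot I^{-2}(\Gamma)\subset\dot I^{-3/2}(\Gamma)$. Finally $h_0$ is smooth, which is of order $-\infty$. Summing these gives \eqref{Rmu distr}, and the conversion to $I^{-7/4}(N^*\Gamma)$ is the one already computed in Proposition~\ref{prop_sqrt}.
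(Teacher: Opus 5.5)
Your proof is correct and follows essentially the same route as the paper. The paper also writes $\CR\mu(\vy)=H(\vy,(s-\phi(\Theta))^{1/2}_\imath)$ with $H$ smooth in $(\vy,p)$. It then splits $H$ into even and odd parts in $p$ via $h_1=H_o/p$ and $h_2=(H_e-H(\vy,0))/p^2$, which are exactly your $B(\vy,p^2)$ and $A_1(\vy,p^2)$, and finishes with Proposition~\ref{prop_sqrt} at $r=1/2$.

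Your use of Whitney's theorem and the one-sided extension actually fills in a step the paper only asserts, namely that the substituted coefficients are smooth across $\Gamma$. The Seeley extension is more than you need, though: since $A_1(\vy,u)$ and $B(\vy,u)$ can be taken smooth for $u$ in a two-sided neighborhood of $0$, you can set $h_2(\vy)=A_1(\vy,\pm(s-\phi(\Theta)))$ and $h_1(\vy)=B(\vy,\pm(s-\phi(\Theta)))$, choosing the sign that makes the argument nonnegative on the $\imath$ side.
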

\begin{proof}


Introduce the function
\be
H(\vy,p) = -\log\Big(\int_0^{E_{\mathrm{mx}}}P(E) e^{-(g_1(\vy,E) + g_2(\vy,E)p)} \dd E\Big),\ \vy\in B_{\vy_0}(\delta),p\in\br.
\ee
Then
\be
\label{nlR 1}
\CR\mu(\vy) = H(\vy,(s - \phi(\Theta))^{1/2}_\imath),\ \vy\in B_{\vy_0}(\delta).
\ee
Since, {by our physical assumptions,} $P(E) \geq 0$ is integrable {and $\int_0^{E_{\text{mx}}}P(E)\mathrm{d}E > 0$,} it is clear that $H$ is a smooth function of $\vy$ and $p$.  

Consider the even and odd parts of $H(p)$:
\be\label{tH 1st}
H_e(p):=[H(p)+H(-p)]/2,\quad
H_o(p):=[H(p)-H(-p)]/2,
\ee
where the dependence of $H$ on $\vy$ is temporarily suppressed for simplicity. Introduce
\be\label{tq defs 1st}
h_1(p):=H_o(p)/p,\quad h_2(p):=(H_e(p)-H(0))/p^2.
\ee
Clearly, $h_1(p)$ and $h_2(p)$ are smooth and even. The $h_i$ are still smooth at zero since the leading order terms of the Taylor expansion of $H_o$ and $H_e$ are $p$ and $p^2$, respectively. Let $h_i(\vy)$, $i=1,2$, be the functions obtained by substituting $p=|s - \phi(\Theta)|$ into \eqref{tq defs 1st}. By construction, $h_i(\vy)$ are smooth in $B_{\vy_0}(\de)$. Moreover, \eqref{nlR 1}--\eqref{tq defs 1st} imply
\be
\label{nlR 2}
\CR\mu(\vy) = H(\vy,0)+h_1(\vy)(s - \phi(\Theta))^{1/2}_\imath + h_2(\vy)(s - \phi(\Theta))_\imath,\ \vy\in B_{\vy_0}(\delta),
\ee
which proves \eqref{sqrt_sing} with $h_0(\vy)=H(\vy,0)$.

Now, by Proposition \ref{prop_sqrt}, we have 
\be
\label{dist_order}
g(\vy) (s - \phi(\Theta))^r_\imath \in \dot{I}^{-(r+1)}(\Gamma)=I^{-(r+5/4)}(N^*\Gamma)
\ee
locally near $\vy_0$, for any smooth $g$. The second claim of the theorem follows from \eqref{sqrt_sing} after setting $r = 1/2$ in \eqref{dist_order}.
\end{proof}

In our next theorem, we quantify the Sobolev order of the singularities of $\mathcal{R}\mu$, and how they relate to the singularities of $Rf$. 

\begin{theorem}\label{WF_thm}
Suppose Assumption~\ref{ass:sngl} holds and $\vx_0$ is the only point where $L_{\vy_0}$ is tangent to $\s$. Let $(\vy_0,\eta_0)\in N^*\Gamma$. We have
\be
\WF^1(\mathcal{R}\nu) = \WF(Rf)
\ee
microlocally near $(\vy_0,\eta_0)$. 
\end{theorem}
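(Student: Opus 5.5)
Here $\mathcal{R}\nu$ means $\mathcal{R}\mu$. The plan is to show both sides equal $N^*\Gamma$ microlocally near $(\vy_0,\eta_0)$, minus the zero section, and we work in $B_{\vy_0}(\delta)$.

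\emph{Right-hand side.} Writing $f=\sum_i\chi_{\Omega_i}$ and repeating the argument of Lemma~\ref{lem:sing-ty} with $\Phi_i\equiv1$ gives
\[
Rf=\tilde g_1+\tilde g_2\,(s-\phi(\Theta))^{1/2}_\imath ,
\]
where $\tilde g_1,\tilde g_2$ are smooth and $\tilde g_2\neq0$ on $\Gamma$. Hence $\WF(Rf)\subset N^*\Gamma$. For the reverse inclusion, note that the term $\tilde g_2(s-\phi)^{1/2}_\imath$ is an elliptic conormal distribution: its symbol from Proposition~\ref{prop_sqrt} is a nonzero multiple of $(\lambda\pm i0)^{-3/2}$ times $\tilde g_2|_\Gamma\neq0$. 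So its wavefront set is all of $N^*\Gamma$ over $\Gamma$. Alternatively, one can use Lemma~\ref{lem:Gamma_sm}: $\mathcal C$ maps $N^*\s$ diffeomorphically onto $N^*\Gamma$, and singularities of $f$ in $N^*\s$ are visible, so $\WF(Rf)=N^*\Gamma$ locally.

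\emph{Left-hand side, upper inclusion.} By Theorem~\ref{log_sing},
\[
\mathcal{R}\mu=h_0+h_1(s-\phi)^{1/2}_\imath+h_2(s-\phi)_\imath .
\]
Proposition~\ref{prop_sqrt} places the second term in $I^{-7/4}(N^*\Gamma)$ and the third in $I^{-9/4}(N^*\Gamma)$. Thus $\WF^1(\mathcal{R}\mu)\subset\WF(\mathcal{R}\mu)\subset N^*\Gamma$ microlocally.

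\emph{Left-hand side, lower inclusion.} We need to show $\mathcal{R}\mu\notin H^1$ microlocally at every point of $N^*\Gamma$ near $(\vy_0,\eta_0)$. First, the term $h_2(s-\phi)_\imath\in I^{-9/4}$ lies in $H^1$ locally by Theorem~\ref{thm:embedding}, with $n_X=3$ and $-9/4+3/4+1=-1/2<0$, and $h_0$ is smooth. So it suffices to treat $u=h_1(s-\phi)^{1/2}_\imath\in I^{-7/4}$. Theorem~\ref{thm:embedding} gives $u\in H^\alpha$ exactly for $\alpha<1$, so the threshold $\alpha=1$ is critical and $u$ fails to be in $H^1$. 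This failure must hold microlocally at every point, and that requires ellipticity, i.e.\ $h_1\neq0$ on $\Gamma$. From the construction, $h_1|_\Gamma=\partial_pH(\vy,0)$. Differentiating the definition of $H$ gives
\[
\partial_pH(\vy,0)=\frac{\int_0^{E_{\mathrm{mx}}} P(E)\,g_2(\vy,E)\,e^{-g_1(\vy,E)}\,\dd E}{\int_0^{E_{\mathrm{mx}}} P(E)\,e^{-g_1(\vy,E)}\,\dd E}.
\]
This step is the main obstacle, because $g_2$ depends on $E$ and we need it to have a constant sign. From the proof of Lemma~\ref{lem:sing-ty}, $g_2(\vy,E)\,(s-\phi)^{1/2}_\imath=\int\Phi_{i_0}\chi_{\Omega_{i_0}}$ along the short chord, and $\Phi_{i_0}>0$. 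Hence $g_2$ is, up to a fixed nonvanishing geometric factor, $\Phi_{i_0}(\vx,E)>0$ on $\Gamma$. Since $P\ge0$ with positive integral, it follows that $h_1\neq0$ on $\Gamma$.

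\emph{Conclusion.} With ellipticity in hand, the sharp direction of Theorem~\ref{thm:embedding}, applied microlocally, gives $\WF^1(u)=N^*\Gamma$. Concretely, the localized Fourier transform of $u$ along the conormal direction behaves like $|\lambda|^{-3/2}$ times the fiber decay, so
\[
(1+|\xi|^2)^{1/2}u(\xi)\,\mathcal F(\varphi\,\mathcal{R}\mu)(\xi)
\]
is not square integrable in any conic neighborhood. Combining the three steps gives $\WF^1(\mathcal{R}\mu)=N^*\Gamma=\WF(Rf)$ near $(\vy_0,\eta_0)$.
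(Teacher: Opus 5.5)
Your proof is correct, and its skeleton matches the paper's. Both start from the representation in Theorem~\ref{log_sing}, use Theorem~\ref{thm:embedding} to get $\mathcal{R}\mu\in H^{\alpha}$ for $\alpha<1$, and locate the failure at $\alpha=1$ in the square-root term. The paper then finishes in one line by differentiating: $\partial_s\big(h_1(s-\phi)^{1/2}_\imath\big)$ contains a multiple of $h_1(s-\phi)^{-1/2}_\imath$, which is not square integrable, so $\mathcal{R}\mu\notin H^1$ locally near $\vy_0$.

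Your version supplies two things the paper leaves implicit.
\begin{itemize}
\item You show failure of $H^1$ microlocally at every point of $N^*\Gamma$ near $(\vy_0,\eta_0)$, via the Fourier-side computation (equivalently, the sharp embedding for elliptic conormal distributions). You also identify $\WF(Rf)=N^*\Gamma$ there. The paper only proves local non-membership near $\vy_0$.
\item More substantively, you verify $h_1|_\Gamma\neq0$. The paper's derivative argument tacitly needs this (its phrase \emph{adopts a square root type singularity}) but never checks it. Lemma~\ref{lem:sing-ty} only gives $g_2(\vy,E)\neq0$, whereas $h_1|_\Gamma=\partial_pH(\vy,0)$ is a $P(E)e^{-g_1}$-weighted average of $g_2$ over $E$. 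That average could vanish if $g_2$ changed sign in $E$. Your observation that, on $\Gamma$, $g_2$ equals an $E$-independent nonzero geometric factor times $\Phi_{i_0}>0$ closes this gap.
\end{itemize}

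One small correction. The singular part equals $+\int\Phi_{i_0}$ over the short chord only if that chord lies in $\Omega_{i_0}$. If $L_{\vy}$ grazes $\s$ from inside $\Omega_{i_0}$, it is minus that integral, and the rest is smooth and absorbed in $g_1$. The sign is still independent of $E$, so $h_1|_\Gamma\neq0$ still follows.
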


\begin{proof}
The inclusion \eqref{Rmu distr} and Theorem~\ref{thm:embedding} imply that $\mathcal{R}\mu\in H^\nu$ near $\vy_0$ for any $\nu<1$. Furthermore, $\mathcal{R}\mu$ adopts a square root type singularity near $\vy_0$ by Corollary \ref{log_sing} and \eqref{sqrt_sing}. As such, the first derivative of \eqref{sqrt_sing} with respect to $s$ cannot be in $L^2$. Therefore, $\mathcal{R}\mu$ is not in $H^1$ locally near $\vy_0$. \end{proof}

\section{Analysis of singularties due to non-linearity - double tangent lines}\label{sec:dbl tang}
We now consider the case when $L_{\vy_0}$ is tangent to $\s$ at two distinct points, $\vx_1$ and $\vx_2$. Let $\s_k$ be a sufficiently small local segment of $\s$ near $\vx_k$, $k=1,2$. Let $\Gamma_k$ be the local segment of $\Gamma$ in a neighborhood of $\vy_0$ corresponding to $\s_k$, $k=1,2$ (cf. \eqref{Ga def}). We will use this notation throughout the rest of the paper.

In what follows we will always assume that if $L_{\vy_0}$ is a double tangent, then no plane $\Pi(\vx_k,\xi_k)$, $k=1,2$, is tangent to $\ga$, where $\xi_k\in N_{\vx_k}^*\s$. This assumption is generically true, as is illustrated by the following example.\vspace{2mm}

\noindent
{\bf Example.} Let $f = \chi_{B_1} + \chi_{B_2}$, where $B_1,B_2 \subset X$ are disjoint balls. 
Let $\Sigma_i\subset X$ be the artifacts in $R^*R \chi_{B_i}$ for $i = 1,2$. 

Generically, $\Sigma_i$ is the union of several co-dimension 1 smooth surfaces. Indeed, for each $y(s)\in\ga$, there are at most two planes that are tangent to $\ga$ at $y(s)$ and to $B_i$. Each such plane contributes a line $L\in \Sigma_i$. The line $L$ passes through $y(s)$ and the point of tangency on the ball.

The assumption holds if $\Sigma_1$ is not tangent to $\partial B_2$ and, similarly, $\Sigma_2$ is not tangent to $\partial B_1$. The set of balls tangent to $\Sigma_i$ is three dimensional, but the full set of balls in $\br^3$ is four dimensional. Hence, $\Sigma_i$ is tangent to $\partial B_j$ with probability zero. That is, we can construct specific examples where the assumption fails, but it is not likely to fail in a practical setting.\vspace{2mm}

We wish to note that, while triple tangent rays are possible in certain special cases, they do not often occur in practice, and thus we consider only single and double tangent rays in this paper.

We now state the assumption which will be needed for our analysis of double tangent singularities.

\begin{assumption}\label{ass:dbl}
Suppose $L_{\vy_0}$ is a double tangent, i.e. it is tangent to $\s$ exactly at two distinct points, $\vx_1$ and $\vx_2$. We assume that Assumption~\ref{ass:sngl} holds at each $\vx_k$, $k=1,2$.
\end{assumption}

Our first theorem describes the singular behavior of the nonlinear data near $\vy_0$. The theorem is analogous to \cite[Theorem 3.1]{katsevich2026}.

\begin{theorem}
\label{double_sing}
Suppose $L_{\vy_0}$ is tangent to $\s$ exactly at two distinct points, $\vx_1$ and $\vx_2$, and Assumption~\ref{ass:sngl} holds at each $\vx_k$, $k=1,2$. For $\delta > 0$ sufficiently small, we have
\be
\label{double_lin}
\mathcal{R}\mu(\vy) = \sum_{i=0}^2\sum_{j=0}^2q_{ij}(\vy)(s - \phi_1(\Theta))_{\imath_1}^{i/2}(s - \phi_2(\Theta))_{\imath_2}^{j/2},\ \vy\in B_{\vy_0}(\delta),
\ee
where the $q_{ij}$ are smooth and we define $x_\imath^0 \equiv 1$. 
\end{theorem}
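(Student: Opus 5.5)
The plan is to reduce to the single-tangent case treated in Lemma~\ref{lem:sing-ty} and Theorem~\ref{log_sing}, now with two independent square-root singularities. The first step is to localize. Choose small neighborhoods of $\vx_1$ and $\vx_2$ and split the line integral $R\mu_E(\vy)$ into the contributions from these two neighborhoods plus the rest of the line. By Assumption~\ref{ass:dbl}, $L_{\vy_0}$ meets $\s$ transversally at every point other than $\vx_1,\vx_2$. Hence the remaining contribution is smooth in $\vy\in B_{\vy_0}(\delta)$ for $\delta$ small.

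For each $k$, apply the argument of Lemma~\ref{lem:sing-ty}, using Lemma~\ref{lem:Gamma_sm} at $\vx_k$ to get the local equation $s=\phi_k(\Theta)$ of $\Gamma_k$. Morse's lemma with parameters then gives
\be
R\mu_E(\vy)=g_0(\vy,E)+g_1(\vy,E)(s-\phi_1(\Theta))^{1/2}_{\imath_1}+g_2(\vy,E)(s-\phi_2(\Theta))^{1/2}_{\imath_2},\quad \vy\in B_{\vy_0}(\delta),
\ee
with $g_0,g_1,g_2$ smooth in $\vy$ and integrable in $E$. The one point needing care is that Lemma~\ref{lem:sing-ty} assumed a single tangency. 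Its proof is local to a neighborhood of $\vx_k$ along the line, however, so it applies verbatim to each cut-off piece. Dependence on $E$ enters only through the factors $\Phi_i$, exactly as before.

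Next, define
\be
H(\vy,p_1,p_2)=-\log\Big(\int_0^{E_{\mathrm{mx}}}P(E)e^{-(g_0+g_1p_1+g_2p_2)}\dd E\Big).
\ee
This is smooth in $(\vy,p_1,p_2)$ because $P\ge0$ is integrable with positive mass, and
\be
\CR\mu(\vy)=H\big(\vy,(s-\phi_1)^{1/2}_{\imath_1},(s-\phi_2)^{1/2}_{\imath_2}\big).
\ee
Then decompose $H$ into its four parity components in $(p_1,p_2)$, namely even/even, odd/even, even/odd and odd/odd. A smooth function even in $p_k$ is a smooth function of $p_k^2$ (Whitney's theorem on even functions), and an odd one equals $p_k$ times a smooth function of $p_k^2$. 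This gives
\be
H=\sum_{a,b\in\{0,1\}}p_1^ap_2^b\,\tilde H_{ab}(\vy,p_1^2,p_2^2).
\ee

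The main obstacle is that substituting $p_k=(s-\phi_k)^{1/2}_{\imath_k}$ makes $p_k^2=(s-\phi_k)_{\imath_k}$, which is only Lipschitz and not smooth. I would handle this as in Theorem~\ref{log_sing}, by splitting each $\tilde H_{ab}$ once more in each variable $p_k^2$:
\be
\tilde H(\cdot,u)=\tilde H(\cdot,0)+u\,K(\cdot,u),
\ee
and then evaluating $K$ at $u=|s-\phi_k|$ rather than at $(s-\phi_k)_{\imath_k}$. On the support of the factor $(s-\phi_k)_{\imath_k}$ these agree, and $K$ is even in $\sqrt u$. So $K(\cdot,|s-\phi_k|)$ is smooth, exactly as $h_i(\vy)$ was smooth in Theorem~\ref{log_sing} (there $h_i(p)$ is even in $p$, hence a smooth function of $p^2$).

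Carrying out this splitting in both variables turns each $p_1^ap_2^b\tilde H_{ab}$ into a sum of terms $q_{ij}(\vy)(s-\phi_1)^{i/2}_{\imath_1}(s-\phi_2)^{j/2}_{\imath_2}$. Here $i=a$ or $a+2$, and the $i=3$ term is folded into $i=1$ by the same even/odd trick, so only $i,j\in\{0,1,2\}$ survive. Collecting terms yields \eqref{double_lin}.
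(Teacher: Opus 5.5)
Your route is the same as the paper's. You use the same expansion of $R\mu_E$ (your localization just spells out the paper's ``easy generalization'' of Lemma~\ref{lem:sing-ty}), the same smooth $H(\vy,p_1,p_2)$, the same four parity components, and the same peeling off of powers of $p_1,p_2$. However, the step you single out as the main obstacle is resolved incorrectly.

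You evaluate $K$ at $u=|s-\phi_k|$ and conclude that $K(\cdot,|s-\phi_k|)$ is smooth because $K$ is ``even in $\sqrt u$''. Every function of $u$ is even in $\sqrt u$, so this gives no information. A smooth function of $u$ composed with $|s-\phi_k|$ has a corner along $\Gamma_k$ unless its odd-order $u$-derivatives vanish at $u=0$; $K(u)=u$ is already a counterexample. This actually occurs: a $p_1^4$ term in the Taylor expansion of $H$, which is generically present, produces the coefficient $|s-\phi_1|$ in front of $(s-\phi_1)_{\imath_1}$. The analogy with Theorem~\ref{log_sing} does not help. Evenness gives $h_i(p)=\tilde h_i(p^2)$, but $\tilde h_i(|s-\phi|)$ is again only Lipschitz. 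The paper's own proof substitutes $p=|s-\phi_1|^{1/2}$, $q=|s-\phi_2|^{1/2}$ and has exactly the same defect.

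The repair is to substitute the signed quantity $u_k=\imath_k(s-\phi_k)$, i.e.\ $s-\phi_k$ when $\imath_k=+$ and $\phi_k-s$ when $\imath_k=-$. This is smooth, and it coincides with $(s-\phi_k)_{\imath_k}$ wherever the accompanying factor $(s-\phi_k)^{i/2}_{\imath_k}$, $i\ge1$, is nonzero. After your splitting, every coefficient $q_{ij}$ depends on $u_1$ only if $i\ge1$ and on $u_2$ only if $j\ge1$. Hence \eqref{double_lin} still holds as an identity, and the $q_{ij}$ are now genuinely smooth. For this you need $\tilde H_{ab}$, and hence $K$, to be smooth on a two-sided neighborhood of $u_k=0$. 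Whitney's theorem gives smoothness up to $u_k=0$, and any smooth extension to $u_k<0$ will do, since those values are always multiplied by zero.

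A minor point: the detour through $i=3$ is unnecessary. For $a=1$, keep $p_1\tilde H_{1b}$ as it is and substitute $u_1=\imath_1(s-\phi_1)$ into $\tilde H_{1b}$. Only the $a=0$ components need the splitting $\tilde H(\cdot,u)=\tilde H(\cdot,0)+u\,K(\cdot,u)$, and likewise in $p_2$.
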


\begin{proof}
By an easy generalization of Lemma~\ref{lem:sing-ty} we have
\be
\label{dist_R_1}
R\mu_E(\vy) =  g_0(\vy,E) + g_1(\vy,E) (s - \phi_1(\Theta))^{1/2}_{\imath_1} + g_2(\vy,E) (s - \phi_2(\Theta))^{1/2}_{\imath_2},\ \vy\in B_{\vy_0}(\delta),
\ee
where the $g_k$ is smooth and $s- \phi_k(\Theta) = 0$ is the defining equation for $\Gamma_k$ near $\vy_0$ for $k = 1,2$. 

Similarly to the proof of Theorem \ref{log_sing}, we have
\be
\label{equ_mid}
\begin{split}
\CR\mu(\vy) =&H\big(\vy,(s - \phi_1(\Theta))_{\imath_1}^{1/2},(s - \phi_2(\Theta))_{\imath_2}^{1/2}\big),\ \vy\in B_{\vy_0}(\delta),\\
H(\vy,p,q) :=&-\log\Big(\int_0^{E_{\mathrm{mx}}}P(E) e^{-(g_0(\vy,E) + g_1(\vy,E)p+ g_2(\vy,E)q)} \dd E\Big),\ p,q\in\br.
\end{split}
\ee 

Introduce the following four functions
\be\label{tH fns}\bs
H_{e,e}(p,q):=&[H(p,q)+H(-p,q)+H(p,-q)+H(-p,-q)]/4,\\
H_{o,e}(p,q):=&[H(p,q)-H(-p,q)+H(p,-q)-H(-p,-q)]/4,\\
H_{e,o}(p,q):=&[H(p,q)+H(-p,q)-H(p,-q)-H(-p,-q)]/4,\\
H_{o,o}(p,q):=&[H(p,q)-H(-p,q)-H(p,-q)+H(-p,-q)]/4.
\end{split}
\ee
The first and second subscripts denote the parity of the corresponding function with respect to $p$ and $q$, respectively. Next, we introduce
\be\label{tq defs}\bs
q_{22}(p,q):=&[H_{e,e}(p,q)-H_{e,e}(p,0)-H_{e,e}(0,q)+H(0,0)]/(p^2q^2),\\
q_{12}(p,q):=&[H_{o,e}(p,q)-H_{o,e}(p,0)]/(pq^2),\\
q_{21}(p,q):=&[H_{e,o}(p,q)-H_{e,o}(0,q)]/(p^2q),\\
q_{11}(p,q):=&H_{o,o}(p,q)/(pq),\\
q_{20}(p,q):=&(H_{e,e}(p,0)-H(0,0))/p^2,\quad q_{10}(p,q):=H_{o,e}(p,0)/p,\\
q_{02}(p,q):=&(H_{e,e}(0,q)-H(0,0))/q^2,\quad q_{01}(p,q):=H_{e,o}(0,q)/q.
\end{split}
\ee
Define also $q_{00}(p,q)\equiv H(0,0)$. The functions $q_{i0}(p,q)$ and $q_{0i}(p,q)$, $i=1,2$, are independent of $q$ and $p$, respectively. We retain both variables in the arguments for notational simplicity. Let $q_{ij}(\vy)$ be the functions obtained by substituting $p=|s - \phi_1(\Theta)|^{1/2}$ and $q=|s - \phi_2(\Theta)|^{1/2}$ into \eqref{tq defs}. Clearly, $q_{ij}(\vy)$ are smooth in $B_{\vy_0}(\de)$. Combining \eqref{equ_mid}--\eqref{tq defs} proves \eqref{double_lin}.
\end{proof}

Next, we study the geometry of the intersection $\Gamma_1\cap\Gamma_2$.

\begin{theorem}
\label{trans_int} Let $L_{\vy_0}$ be tangent to $\s$ exactly at two points, $\vx_1$ and $\vx_2$.  Suppose Assumption \ref{ass:sngl} holds at $\vx_k$, $k=1,2$. Then, locally, $\Gamma=\Gamma_1\cup\Gamma_2$, where $\Gamma_k$, $k=1,2$, are smooth. Moreover, $\vy_0\in\Gamma_1\cap\Gamma_2$ and the intersection is transverse. 
\end{theorem}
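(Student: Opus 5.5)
Smoothness of each $\Gamma_k$ comes from Lemma~\ref{lem:Gamma_sm} applied locally. Since $\vx_1\neq\vx_2$, we take small patches $\s_1,\s_2$ of $\s$ near $\vx_1,\vx_2$ and apply the argument of Lemma~\ref{lem:Gamma_sm} to each patch separately. That argument is the implicit function theorem applied to \eqref{tang cond} with $F$ replaced by a local defining function $F_k$ of $\s_k$, and it only uses Assumption~\ref{ass:sngl} at $\vx_k$. It yields a smooth surface $\Gamma_k=\{s=\phi_k(\Theta)\}$ near $\vy_0$ with $\vy_0\in\Gamma_k$. For $\delta$ small, any line $L_\vy$ with $\vy\in B_{\vy_0}(\delta)$ tangent to $\s$ must be tangent near $\vx_1$ or near $\vx_2$. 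This holds by compactness, since $L_{\vy_0}$ is tangent to $\s$ only at $\vx_1,\vx_2$ and the set of tangent pairs is closed. Hence $\Gamma=\Gamma_1\cup\Gamma_2$ locally.

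For transversality, since $\Gamma_k$ are graphs over $\Theta$, we must show $\dd\phi_1(\Theta_0)\neq\dd\phi_2(\Theta_0)$. Equivalently, the conormals $N^*_{\vy_0}\Gamma_1$ and $N^*_{\vy_0}\Gamma_2$ must not be parallel. The proof of Lemma~\ref{lem:Gamma_sm}, via \eqref{eta tang v2}, gives these explicitly:
\[
\eta_k=\la_k\,\xi_k\,(y'(s_0)\ \ t_k\Theta_\al^\top\ \ t_k\Theta_\bt^\top),\quad \xi_k\in N^*_{\vx_k}\s,\ \vx_k=y(s_0)+t_k\Theta_0,
\]
so $\eta_k=\la_k\,\cC(\vx_k,\xi_k)$. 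We suppose, for contradiction, that $\eta_2=c\,\eta_1$ with $c\neq0$. By homogeneity of $\cC$ in the fiber, this gives $\cC(\vx_1,\la_1c\,\xi_1)=\cC(\vx_2,\la_2\xi_2)$, with two distinct base points $\vx_1\neq\vx_2$. Lemma~\ref{lem:bolk fail} then forces $\xi_1 y'(s_0)=0$, i.e., $\Pi(\vx_1,\xi_1)$ is tangent to $\ga$ at $y(s_0)$. This contradicts Assumption~\ref{ass:sngl}\ref{sng transv} at $\vx_1$. Hence the conormals are independent and the intersection is transverse.

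The main obstacle is checking that Lemma~\ref{lem:bolk fail} applies in this situation. It is stated for points of $T^*\br^3$ mapped to the same point by $\cC$, and we use the explicit formula \eqref{CR alt} together with the identity $t\vsig\hat\Theta''=\vsig\hat\Theta'(t\Theta_\al^\top\ t\Theta_\bt^\top)$ to confirm that $\eta_k$ lies in the image of $\cC$. As a direct alternative, proportionality of the $(\eta_\al,\eta_\bt)$ components gives $t_1\xi_1\parallel t_2\xi_2$ on $\mathrm{span}(\Theta_\al,\Theta_\bt)$. Both $\xi_k$ annihilate $\Theta_0$ by tangency, so $\xi_2=\kappa\xi_1$ with $\la_2 t_2\kappa=c\la_1 t_1$. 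The $s$-component then reads $\la_2\kappa\,\xi_1y'=c\la_1\xi_1y'$. If $\xi_1y'\neq0$, these two relations force $t_1=t_2$, i.e., $\vx_1=\vx_2$, which is a contradiction.
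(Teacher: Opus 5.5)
Your proposal is correct and follows the paper's proof. Smoothness of each $\Gamma_k$ comes from Lemma~\ref{lem:Gamma_sm}; a non-transverse intersection would give $\cC(\vx_1,\xi_1)=\cC(\vx_2,\xi_2)$ with $\vx_1\neq\vx_2$, which Lemma~\ref{lem:bolk fail} turns into $\xi_k y'(s_0)=0$, contradicting Assumption~\ref{ass:sngl}\ref{sng transv}. Your compactness argument for $\Gamma=\Gamma_1\cup\Gamma_2$ and the direct coordinate check are sound additions that the paper leaves implicit.
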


\begin{proof} By Assumption \ref{ass:sngl} and Lemma~\ref{lem:Gamma_sm}, $\Gamma_1$ and $\Gamma_2$ are smooth in a neighborhood of $\vy_0$. Suppose $\Gamma_1$ and $\Gamma_2$ intersect non-transversally at $\vy$. Then the conormal vectors to $\Gamma_1$ and $\Gamma_2$ at $\vy_0$ are both parallel, say, to some $\eta_0$. By the last claim of Lemma~\ref{lem:Gamma_sm}, there exist $\xi_k \in N_{\vx_k}^*\s$, $k=1,2$, such that $\mathcal{C}(\vx_1,\xi_1) = \mathcal{C}(\vx_2,\xi_2) = (\vy_0,\eta_0)$.  By Lemma~\ref{lem:bolk fail}, this is possible only if $\xi_1 y^\prime=\xi_2y^\prime=0$. However, this is ruled out by Assumption~\ref{ass:sngl}\eqref{sng transv} applied at $\vx_1$ and $\vx_2$.
\end{proof}

By the last theorem, $\Gamma_1\cap\Gamma_2$ is a 1-D smooth curve, and each $\vy\in \Gamma_1\cap\Gamma_2$ defines a double tangent line. 
Together, the family of double tangent lines form the surface
\be\label{art surface}
\CL_a:=\cup_{\vy\in \Gamma_1\cap\Gamma_2} (L_{\vy}\cap X).
\ee
Later in section \ref{sec: sing recon}, we will show that $\mathcal{L}_a$ describes the locations of the artifacts in the reconstruction due to beam hardening. Hence, here and below the subscript $a$ stands for the artifact. 

By the above analyses, we can now see how the non-linearities in the data can create new singularities on $\Gamma_1 \cap \Gamma_2$ due to the product terms in \eqref{double_lin}, and the transverse intersection property proven in Theorem \ref{trans_int}. We begin to formalize this idea in the following lemma, {where we also} quantify the strength of the added singularities. Throughout the rest of the paper, we use the notation $\Xi_a = N^*(\Gamma_1\cap\Gamma_2)$. That is, $\Xi_a$ describes a local piece of $N^*(\Gamma_1\cap\Gamma_2)$ near some $\vy_0$.

\begin{lemma}
\label{lemma_prod} Suppose Assumption~\ref{ass:dbl} holds. Let $i_1,i_2 \geq 1$. We have
\be
\begin{split}
&(s - \phi_1(\Theta))_{\imath_1}^{i_1/2}(s - \phi_2(\Theta))_{\imath_2}^{i_2/2}\\
 &\in I^{-\frac{i_1}{2} - \frac{5}{4},-\frac{i_2+1}{2}}(\Xi_a, N^*\Gamma_1) + I^{-\frac{i_2}{2} - \frac{5}{4},-\frac{i_1+1}{2}}(\Xi_a, N^*\Gamma_2)
\end{split}
\ee
locally near $\vy_0$.
\end{lemma}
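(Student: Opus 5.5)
Each factor will be identified as a conormal distribution via Proposition~\ref{prop_sqrt}, and the product will then be handled by Lemma~\ref{lem_green}, using the transversality from Theorem~\ref{trans_int}.

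First, apply Proposition~\ref{prop_sqrt} with $u\equiv 1$ and $r=i_k/2>0$ to each factor separately. By Lemma~\ref{lem:Gamma_sm}, $s-\phi_k(\Theta)=0$ is a local defining equation of the smooth surface $\Gamma_k$. This gives
\[
(s-\phi_k(\Theta))^{i_k/2}_{\imath_k}\in \dot I^{-(i_k/2+1)}(\Gamma_k),\quad k=1,2,
\]
locally near $\vy_0$. By Theorem~\ref{trans_int}, $\Gamma_1$ and $\Gamma_2$ intersect transversally at $\vy_0$, and so $\Gamma_1\cap\Gamma_2$ is a smooth curve near $\vy_0$. Lemma~\ref{lem_green}, with $Y=\Gamma_1$, $Z=\Gamma_2$, $\q_1=-(i_1/2+1)$ and $\q_2=-(i_2/2+1)$, then yields
\[
(s-\phi_1)^{i_1/2}_{\imath_1}(s-\phi_2)^{i_2/2}_{\imath_2}\in \dot I^{\q_1,\q_2}(\Gamma_1,\Gamma_1\cap\Gamma_2)+\dot I^{\q_2,\q_1}(\Gamma_2,\Gamma_1\cap\Gamma_2).
\]

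Next, convert these spaces to the $I^{p,l}$ notation using \eqref{two types}. The ambient space $Y$ has $n=3$. In the first summand we have $Y_1=\Gamma_1$ and $Y_2=\Gamma_1\cap\Gamma_2$, with $d_1=1$ and $d_2=1$. Therefore
\[
\dot I^{\q_1,\q_2}(\Gamma_1,\Gamma_1\cap\Gamma_2)=I^{p,l}(\Xi_a,N^*\Gamma_1),
\]
with $p=\q_1+\tfrac12-\tfrac34=-\tfrac{i_1}{2}-\tfrac54$ and $l=\q_2+\tfrac12=-\tfrac{i_2+1}{2}$. These are exactly the indices in the statement. The second summand is symmetric: the roles of $i_1,i_2$ and of $\Gamma_1,\Gamma_2$ are swapped, giving $I^{-\frac{i_2}{2}-\frac54,-\frac{i_1+1}{2}}(\Xi_a,N^*\Gamma_2)$.

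The main point needing care is the applicability of Lemma~\ref{lem_green} in its local form. The lemma concerns products of conormal distributions, and both factors here are genuinely conormal only after the cutoff decomposition used in the proof of Proposition~\ref{prop_sqrt}. One must check that the smooth remainders $\psi_\pm$ arising there, once multiplied by the other conormal factor, stay inside the claimed classes. This holds because the product of a smooth function with an element of $\dot I^{\q}(\Gamma_k)$ remains in $\dot I^{\q}(\Gamma_k)$. In turn, $\dot I^{\q}(\Gamma_k)$ is contained in the paired class for the pair $(\Gamma_k,\Gamma_1\cap\Gamma_2)$ with the indices above, since the order on $N^*\Gamma_k$ away from the intersection matches $p+l$ by Definition~\ref{pair_lag}. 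Everything is localized near $\vy_0$ with a cutoff supported in $B_{\vy_0}(\delta)$, where transversality holds.
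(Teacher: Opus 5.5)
Your argument is correct and is the same as the paper's proof. You apply Proposition~\ref{prop_sqrt} to each factor, then Lemma~\ref{lem_green} using the transversality from Theorem~\ref{trans_int}, then convert with \eqref{two types} using $n=3$, $d_1=d_2=1$, and your indices match. Your final paragraph is unnecessary, since Proposition~\ref{prop_sqrt} already places each whole factor, smooth part included, in $\dot I^{-(i_k/2+1)}(\Gamma_k)$; its side justification also misreads Definition~\ref{pair_lag}, because the order of $I^{p,l}(\Xi_a,N^*\Gamma_k)$ on $N^*\Gamma_k\setminus\Xi_a$ is $p$, not $p+l$.
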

\begin{proof}
By Proposition \ref{prop_sqrt}, we have 
\be
u_k = (s - \phi_k(\Theta))_{\imath_k}^{i_k/2}\in \dot{I}^{\nu_k}(\Gamma)=I^{\nu_k-1/4}(N^*\Gamma),\ k=1,2,
\ee 
near $\vy_0$, where $\nu_k = -(i_k/2)-1$. We aim to use Lemma \ref{lem_green} to prove the result. First, we have $\text{codim}_X(\Gamma_{1,2}) = d_1 = 1$ and $\text{codim}_X(\Gamma_1 \cap\Gamma_2) = d_1 + d_2 = 2 \implies d_2 = 1$, since, by Theorem \ref{trans_int}, the intersection of $\Gamma_{1}$ and $\Gamma_2$ is transverse near $\vy_0$. The application of \eqref{two types} and Lemma \ref{lem_green} yields the result 
\be\label{u1u2}
\begin{split}
u_1u_2 &\in I^{\nu_1,\nu_2}(\Gamma_1,\Gamma_1\cap\Gamma_2) + I^{\nu_2,\nu_1}(\Gamma_2,\Gamma_1\cap\Gamma_2) \\
&=  I^{\nu_1 - \frac{1}{4},\nu_2+\frac{1}{2}}(\Xi_a, N^*\Gamma_1) + I^{\nu_2 - \frac{1}{4},\nu_1+\frac{1}{2}}(\Xi_a, N^*\Gamma_2),
\end{split}
\ee
after inputting the values for $d_{1,2}$ as above. Substituting the values of $\nu_{1,2}$ proves the lemma.
\end{proof}

Let $u_k = (s - \phi_k(\Theta))_{\imath_k}^{i_k/2}$, $k=1,2$, as in the proof above. From  Definition~\ref{pair_lag} and \eqref{u1u2}, we have
\be\label{wf nl}
\WF_{\vy_0}(\mathcal{R}\mu) = \WF_{\vy_0}(u_1u_2) \subset  N_{\vy_0}^*(\Gamma_1\cap\Gamma_2)=\text{span}(\eta_1,\eta_2),
\ee
where $\eta_k$ are conormal to $\Gamma_k$ at $\vy_0$ for $k = 1,2$. This also follows using the results of Hormander \cite[Theorem 8.2.10]{hor}. This is in contrast to the linear analog, where
\be\label{wf lin}
\WF_{\vy_0}(Rf) = \text{span}(\eta_1) \cup \text{span}(\eta_2).
\ee
Thus, the data non-linearity creates new singularities at points $\vy_0$ corresponding to double tangent lines. At each such $\vy_0$, the added conormal directions lie in the plane spanned by $\eta_k\in N_{\vy_0}^*(\Gamma_k)$, $k=1,2$. 

By Theorem \ref{double_sing}, $\mathcal{R}\mu$ takes the form
\be
\label{equ_sum}
\mathcal{R}\mu(\vy) = \mathcal{T}(\vy) + \sum_{i=1}^2\sum_{j=1}^2q_{ij}(\vy)(s - \phi_1(\Theta))_\imath^{i/2}(s - \phi_2(\Theta))_\imath^{j/2} = \mathcal{T}(\vy) + \mathcal{N}(\vy)
\ee
near $\vy_0$. Here, $\WF(\mathcal{T}) = \WF(Rf)$ near $\vy_0$, and $\mathcal{T}$ behaves microlocally like the linear transform. Further, by \eqref{Rmu distr},
\be\label{cal_L}
\mathcal{T} = u_1' + u_2',\ u_k' \in I^{-7/4}(N^*\Gamma_k),\ k=1,2. 
\ee
The term $\mathcal{N}$ arises from the non-linearities and 
\be
\label{cal_N}
\mathcal{N}\in I^{-\frac{7}{4},-1}(\Xi_a, N^*\Gamma_1) + I^{-\frac{7}{4},-1}(\Xi_a, N^*\Gamma_2)
\ee
by Lemma \ref{lemma_prod}. This follows after setting $i_1 = i_2 = 1$ in Lemma \ref{lemma_prod} and noting that the lower order terms of $\mathcal{N}$ (i.e., in the sum in \eqref{equ_sum}) have stronger (lower Sobolev order) singularities.

We now quantify the Sobolev order of the added singularities of $\mathcal{R}\mu$, and how they compare to the singularities of $Rf$ in the case when $L_{\vy_0}$ is a double tangent. Pick any $(\vy_0,\eta)\in N^*(\Gamma_1\cap\Gamma_2)\setminus(N^*\Gamma_1\cup N^*\Gamma_2)$. Thus, by \eqref{wf nl} and \eqref{wf lin},
\be
\eta \in \text{span}(\eta_1,\eta_2)\backslash \big(\text{span}(\eta_1) \cup \text{span}(\eta_2)\big),
\ee
where the $\eta_k$ are the conormals to $\Gamma_k$ at $\vy_0$. By \eqref{equ_sum}, $\mathcal{R}\mu = \mathcal{T} + \mathcal{N}$ near $\vy_0$. The equations \eqref{cal_L} and \eqref{cal_N} imply that $(\vy_0,\eta)\notin \WF(\mathcal{T})$.
By \eqref{cal_N} and Definition \ref{pair_lag}, we have
\be
\mathcal{N} \in I^{-\frac{11}{4}}\big( \Xi_a \backslash (N^*\Gamma_1\cup N^*\Gamma_2)\big),
\ee
near $\vy_0$. Theorem~\ref{thm:embedding} implies that $\mathcal{N}\in H^{\alpha}$ for any $\alpha < 2$, locally near $(\vy_0, \eta)$.

Thus, the additional singularities in the data created by the product terms in $\mathcal{N}$ are smoother than the singularities which correspond to $\WF(Rf)$.

\section{Properties of the composition $\cC^t\circ\Xi_a$}\label{sec:props comp}

We now aim to describe how the added singularities in the data, along $\Xi_a$, affect the reconstruction. As in Theorem \ref{lin_art}, we consider the reconstruction operator $R^* \mathcal{P}$, where $\mathcal{P}$ is a $\Psi$DO. This has the form of FBP, where $\mathcal{P}$ represents the filter. To assess the resulting artifacts, it is important to understand how the canonical relation of $R^*$, namely $\Cc^t$, propagates the new singularities in $\Xi_a$ back to the image domain. This is the purpose of this section.

In our next theorem, we describe the location and direction of the artifacts in the reconstruction caused by the additional singularities in $\mathcal{N}$. Recall that $\CL_a$ is defined in \eqref{art surface}.

\begin{theorem}\label{thm:C nonlin_art}
Suppose $L_{\vy_0}$ is a double tangent line and  Assumption~\ref{ass:dbl} holds. One has
\be
\label{nonlin_art_equ}
\cC^t\big(\Xi_a\big)\subset  N^*\CL_a.
\ee
Moreover, if $(\vy,\eta)\in \Xi_a$ and $(\vx_a,\xi_a)=\cC^t(\vy,\eta)$ (which, in particular, implies $\vx_a\in X$), then
\be
\eta_s=\xi_a\, y^\prime(s)\not=0.
\ee
\end{theorem}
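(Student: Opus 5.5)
Throughout, fix $(\vy,\eta)\in\Xi_a$ with $\vy=(s,\Theta)\in\Gamma_1\cap\Gamma_2$, so that $L_\vy$ is tangent to $\s$ at $\vx_1(\vy)$ and $\vx_2(\vy)$. The plan is to compute $\cC^t(\vy,\eta)$ explicitly using \eqref{CR alt} and Lemma~\ref{lem:Gamma_sm}.

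\textbf{Step 1: decompose $\eta$.} Since $\Gamma_1$ and $\Gamma_2$ meet transversally (Theorem~\ref{trans_int}), we have $N^*_\vy(\Gamma_1\cap\Gamma_2)=\mathrm{span}(\eta_1,\eta_2)$, where $\eta_k$ is conormal to $\Gamma_k$ at $\vy$. Write $\eta=c_1\eta_1+c_2\eta_2$.

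\textbf{Step 2: express the $\eta_k$ through $\cC$.} By Lemma~\ref{lem:Gamma_sm} and \eqref{eta tang v2}, we may normalize
\[
\eta_k=\xi_k\,(y'(s)\ \ t_k\Theta_\al^\top\ \ t_k\Theta_\bt^\top),
\]
where $\vx_k=y(s)+t_k\Theta$ and $\xi_k\in N^*_{\vx_k}\s$. Hence $\eta=(\eta_s,\eta_\Theta)$ with
\[
\eta_s=(c_1\xi_1+c_2\xi_2)\,y'(s),\qquad \eta_\Theta=(c_1t_1\xi_1+c_2t_2\xi_2)\,\hat\Theta'^{\top}.
\]

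\textbf{Step 3: invert $\cC$.} From \eqref{CR alt}, a point of $\cC$ over $\vy$ has $\eta_\Theta=t\,\xi\,\hat\Theta'^\top$ with $\xi=\vsig\hat\Theta'\perp\Theta$, together with $\eta_s=\xi y'(s)$. Since $\xi_k\perp\Theta$, the candidates are $\xi=c_1\xi_1+c_2\xi_2$ and $t=\eta_s^{-1}\,\ldots$. More carefully, we need $t\xi=c_1t_1\xi_1+c_2t_2\xi_2$ and $\xi y'=c_1\xi_1y'+c_2\xi_2y'$. Here $\xi_1,\xi_2$ are linearly independent (they span $\Theta^\perp$), since otherwise Lemma~\ref{lem:bolk fail} together with Assumption~\ref{ass:sngl}\ref{sng transv} would give a contradiction, exactly as in the proof of Theorem~\ref{trans_int}. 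Taking $\xi=a_1\xi_1+a_2\xi_2$, we get $ta_k=c_kt_k$. The $s$-equation becomes
\[
\sum_k c_k\,\frac{t_k}{t}\,\xi_ky'=\sum_k c_k\,\xi_ky',
\]
which determines $t$ uniquely whenever $\eta_s\neq0$. If $\eta_s=0$, then $\sum_k c_kt_k\xi_ky'=0$ must also hold. Since $t_1\neq t_2$ and $\xi_ky'\neq0$, this forces $c_1=c_2=0$ and hence $\eta=0$. So $\eta_s\neq0$, with the exception of the points where $t$ would land off $X$.

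\textbf{Step 4: the main obstacle.} The real task is the claim $\eta_s\neq0$, which is exactly the alternative just derived: either $\eta_s\neq 0$, or no preimage exists. I would handle it by reading the statement as a statement about preimages in $X$. In that reading, $\eta_s=\xi_a y'$ holds by \eqref{CR alt}, and the determinant argument above shows that $\eta_s=0$ would require $\eta=0$.

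\textbf{Step 5: show $\xi_a$ is conormal to $\CL_a$.} The point is $\vx_a=y(s)+t\Theta\in L_\vy\subset\CL_a$, and $\xi_a\perp\Theta$, which is the direction of the ruling. It remains to show that $\xi_a$ annihilates the transverse tangent direction $\partial_\tau(y(s(\tau))+t\Theta(\tau))$ along the curve $\vy(\tau)\in\Gamma_1\cap\Gamma_2$.

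Since $\dot\vy\in T(\Gamma_1\cap\Gamma_2)$, we have $\eta_k\dot\vy=0$ for $k=1,2$, which reads
\[
\xi_k\big(y'\dot s+t_k\dot\Theta\big)=0 .
\]
For the curve through $\vx_a$ we then compute
\[
\xi_a\big(y'\dot s+t\dot\Theta\big)=\sum_k a_k\,\xi_k\big(y'\dot s+t\dot\Theta\big).
\]
Substituting $a_k=c_kt_k/t$ and using the identities above should reduce this to $\sum_k c_k\,\eta_k\dot\vy\cdot(\text{const})$. I expect this computation to be the delicate point. I would verify it directly via the clean identity $\xi_a\,\dd_\vy\big(y(s)+t\Theta\big)=\eta$, which follows from the definition of $\cC$; pairing with $\dot\vy$ and using $\eta\dot\vy=0$ then gives the conormality to $\CL_a$.
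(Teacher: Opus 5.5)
Your computation of $t$ and your proof that $\eta_s\neq0$ follow the paper's argument. The paper also writes $\eta=c_1\eta^{(1)}+c_2\eta^{(2)}$ and obtains $t_a=\frac{c_1t_1(\xi_1y')+c_2t_2(\xi_2y')}{c_1(\xi_1y')+c_2(\xi_2y')}$ and $\xi_a=t_a^{-1}(c_1t_1\xi_1+c_2t_2\xi_2)$. It then gets the second claim from the fact that numerator and denominator cannot vanish together when $t_1\neq t_2$ and $\xi_ky'\neq0$.

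Your route to conormality is different. The paper substitutes the explicit $t_a,\xi_a$ and uses $\xi_k(y'\pa_\mu s+t_k\pa_\mu\Theta)=0$. This reduces $\xi_a(y'\pa_\mu s+t_a\pa_\mu\Theta)$ to $[c_1(\xi_1y')(t_1-t_a)+c_2(\xi_2y')(t_2-t_a)]\pa_\mu s$, which vanishes by the formula for $t_a$. You instead use that every point of $\cC$ satisfies $\eta=\xi\,\dd_{\vy}(y(s)+t\Theta)$ with $t$ held fixed. This is just \eqref{CR alt}, because $\hat\Theta''=\hat\Theta^\prime(\hat\Theta^\prime)^\top$. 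Hence $\xi_a(y'\pa_\mu s+t_a\pa_\mu\Theta)=\eta\,\dot\vy=0$ for $\dot\vy\in T(\Gamma_1\cap\Gamma_2)$.

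This argument is correct, shorter, and more general. It shows that $\cC^t(N^*\Sigma)$ lies in the conormal bundle of the ruled surface $\bigcup_{\vy\in\Sigma}L_{\vy}$ for any curve $\Sigma\subset Y$, using neither double tangency nor the formula for $t_a$. The paper's explicit formulas buy something else: they show which points of $L_{\vy}$ are reached and that each base point carries a unique artifact direction. Both approaches need the explicit $t$-relation for the second claim anyway. Your first attempted computation also closes: using $\xi_ay'=\eta_s$ and $t\xi_a=\sum_kc_kt_k\xi_k$, it equals $\sum_kc_k\eta_k\dot\vy$ exactly.

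One assertion in Step 3 is false, though inessential: $\xi_1$ and $\xi_2$ need not be linearly independent.
\begin{itemize}
\item Theorem~\ref{trans_int} gives independence of $\eta_1,\eta_2$, not of $\xi_1,\xi_2$. Parallel $\xi_k$ still give non-parallel $\eta_k$ because $t_1\neq t_2$ and $\xi_ky'\neq0$.
\item Lemma~\ref{lem:bolk fail} only yields parallelism as a consequence of coinciding images, not the converse.
\item A counterexample comes from the paper's own two-ball experiment. Take a line from $y(s)$ in the plane $x_3=0$ tangent to both equators. Both normals are parallel to $e_3\times\Theta$, and Assumption~\ref{ass:dbl} holds there.
\end{itemize}

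The fix is to drop the coefficients $a_k$. Since $\hat\Theta''$ is invertible, $\xi\mapsto\xi(\hat\Theta^\prime)^\top$ is injective on covectors annihilating $\Theta$. So $\eta_\Theta=t\,\xi(\hat\Theta^\prime)^\top$ gives $t\xi=c_1t_1\xi_1+c_2t_2\xi_2$ directly, and the rest of your argument goes through unchanged.
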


{\begin{remark}
    Geometrically, the second claim of the theorem means that the tangent plane to $\mathcal{L}_a$ at $\vx_a$ (i.e., $\Pi(\vx_a,\xi_a)$, using the notation of the theorem) intersects $\gamma$ transversally at $y(s)$.  This plane may be tangent to $\ga$ at other points, however.
\end{remark}}

\begin{proof} 
Define $\xi_k(s):=\dd F_k(\vx_k(s))\in N_{\vx_k(s)}^*(\s_k)$, where $\vx_k(s)=y(s)+t_k(s)\Theta_a(s)$ is the point where $L_{\vy_a(s)}$ is tangent to $\s_k$, $k=1,2$, and $s$ is close to $s_0$. By the last claim of Lemma~\ref{lem:Gamma_sm}, we can find $\eta^{(k)}(s)\in N_{\vy_a(s)}^*(\Gamma_k)$ so that $(\vy_a(s),\eta^{(k)}(s))=\mathcal{C}(\vx_k(s),\xi_k(s))$, $k=1,2$. To simplify the notation, in what follows we omit the dependence of most quantities on $s$. 

Let $\eta=(\eta_s,\eta_\theta)$, $\eta_{\theta}=(\eta_\al,\eta_\bt)$. We can rewrite \eqref{eta-xi} in the form
\be\label{eta-xi-new}
\eta_s=\vsig\hat\Theta^\prime y^\prime,\ \eta_\theta=t\vsig\hat\Theta^{\prime\prime},\ 
\xi=\vsig\hat\Theta^\prime.
\ee
From \eqref{useful matr}, \eqref{CR alt}, and \eqref{eta-xi-new}, we have
\be\label{eta(xi)}
\eta_s^{(k)}=\xi_k y^\prime,\ \eta_\theta^{(k)}=t_k\xi_k(\hat\Theta^\prime)^{-1}\hat\Theta^{\prime\prime},\ k=1,2.
\ee

To describe the artifact microlocally we need to solve
\be\label{etas}
\eta_s=\vsig\hat\Theta^\prime y^\prime,\ \eta_\theta=t\vsig\hat\Theta^{\prime\prime},\ \eta:=c_1\eta^{(1)}+c_2\eta^{(2)}\in\mathrm{span}(\eta^{(1)},\eta^{(2)}),
\ee
for $t$ and $\vsig$. Letting $\eta^{(k)} = \begin{pmatrix}\eta_s^{(k)} & \eta_\theta^{(k)}\end{pmatrix}$, the solution is found to be
\be\label{ta}
t_a=\frac{  [c_1\eta_\theta^{(1)}+c_2\eta_\theta^{(2)}](\hat\Theta^{\prime\prime})^{-1} \hat\Theta^\prime y^\prime}{c_1\eta_s^{(1)}+c_2\eta_s^{(2)}},\ \vsig_a=\frac 1{t_a} [c_1\eta_\theta^{(1)}+c_2\eta_\theta^{(2)}](\hat\Theta^{\prime\prime})^{-1}.
\ee
The corresponding artifact point $(\vx_a,\xi_a)=\cC^t(\vy,\eta)$ is then given by 
\be\label{xia}
\vx_a=y(s)+t_a\Theta_a,\quad \xi_a=\vsig_a\hat\Theta^\prime=(1/t_a)[c_1\eta_\theta^{(1)}+c_2\eta_\theta^{(2)}] (\hat\Theta^{\prime\prime})^{-1}\hat\Theta^\prime.
\ee
Substitute \eqref{eta(xi)} into \eqref{ta} and \eqref{xia}:
\be\label{ta xia}
t_a=\frac{c_1t_1(\xi_1 y^\prime)+c_2t_2(\xi_2 y^\prime)}{c_1(\xi_1 y^\prime)+c_2(\xi_2 y^\prime)},\ \xi_a=\frac 1{t_a} (c_1t_1\xi_1+c_2t_2\xi_2).
\ee
By Assumption~\ref{ass:dbl}, $\xi_k y^\prime\not=0$, $k=1,2$. 
Since $t_1\not=t_2$ (the two points of tangency $\vx_k(s)$ are distinct), the first equation in \eqref{ta xia} implies that any $t_a\in\br$ can, in principle, be obtained by selecting $c_1$ and $c_2$ appropriately. However, due to the restriction $\vx_a\in X$, not all values of $t_a$ occur. From \eqref{ta xia} we see that there is only one covector (up to scaling) corresponding to each base point on the artifact surface $\CL_a$. 

The conditions $t_1\not=t_2$, $\xi_k y^\prime\not=0$, $k=1,2$, and $\vx_a\in X$, also imply that in the formula for $t_a$, the numerator and denominator cannot simultaneously be zero. This means that the denominator is bounded away from zero. Otherwise, $t_a\to\infty$, and the corresponding artifact point is outside $X$. The numerator cannot be zero either, because $X$ and $\ga$ are disjoint by assumption.

The second equation in \eqref{ta xia} implies that $\xi_a\Theta_a=0$ because, by construction, $\xi_k\Theta_a=0$, $k=1,2$. To show that $\xi_a$ is (co)normal to $\CL_a$ it remains to prove that 
\be\label{scnd cond}
\xi_a \pa_{\mu}(y(s)+t_a\Theta_a)=\xi_a \pa_{\mu}(y^\prime(s)\pa_\mu s+t_a\pa_\mu\Theta_a)=0,
\ee
where $s$, $t_a$, and $\Theta_a$ are functions of $\mu$. Recall that $\mu$ is the parameter along $\Gamma_1\cap\Gamma_2$. 

By construction, $F_k(y(s(\mu))+t_k(\mu)\Theta_a(\mu))\equiv0$. Recall that $F_k(x)=0$ is a local equation of $\s_k$. Differentiating with respect to $\mu$ and using the double tangency we find
\be\label{xii norm}
\xi_k\big(y^\prime(s)\pa_\mu s+t_k\pa_\mu\Theta_a\big)=0,\ k=1,2.
\ee
Substituting \eqref{xii norm} and \eqref{ta xia} into \eqref{scnd cond} and dropping the $1/t_a$ prefactor we compute
\be
(c_1t_1\xi_1+c_2t_2\xi_2) (y^\prime(s)\pa_\mu s+t_a\pa_\mu\Theta_a^\prime)
=\big[c_1(\xi_1 y^\prime)(t_1-t_a)+c_2(\xi_2 y^\prime)(t_2-t_a)\big]\pa_\mu s=0.
\ee
We have now proven that the nonlinear artifacts are in $N^*\CL_a$.

To prove the second claim, suppose the artifact point $\vx_a\in X$ is such that $\xi_a\, y^\prime=0$. It follows from the last equation in \eqref{eta-xi-new} and the first equation in \eqref{etas} that $\eta_s=\xi_a\, y^\prime$. Therefore the statements $\eta_s=0$ and $\xi_a\, y^\prime=0$ are equivalent provided that $\cC(\vx_a,\xi_a)=(\vy,\eta)$. As mentioned above, $\vx_a\not\in\ga$ implies $t_a\not=0$. The second equation in \eqref{ta xia} implies
\be\label{ta xia cor}\begin{split}
0=t_a(\xi_a y^\prime)=&c_1t_1(\xi_1 y^\prime)+c_2t_2(\xi_2 y^\prime)\\
=&c_1t_1 \eta_s^{(1)}+c_2t_2\eta_s^{(2)}.
\end{split}
\ee
Together with the first equation in \eqref{ta xia} this leads to the contradiction $t_a=0$.   
\end{proof}

The proof of the preceding theorem explicitly describes the mechanism of how the application of $\cC^t$ to the singularities of the data maps them into the image domain. The following lemma derives a somewhat similar result. It is more powerful than the former, but its proof is less revealing.

\begin{lemma}\label{lem_clean}
Suppose $L_{\vy_0}$ is a double tangent line and Assumption~\ref{ass:dbl} holds. 
The composition $\Cc^t \circ \Xi_a$ is clean with excess zero.
\end{lemma}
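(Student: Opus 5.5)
The plan is to check Definition~\ref{def:clean comp} directly. Set $L=\Cc^t\times\Xi_a$ and $M=(T^*X\setminus 0)\times\Delta_Y$. The aim is to show that $L\cap M$ is a smooth manifold whose tangent space is $TL\cap TM$, with dimension equal to $\dim(\Cc^t\circ\Xi_a)$, so that the excess is zero. The cleanest way to organise this uses the standard criterion that clean composition with excess zero is the same as transversal composition. That is, I will show
\[
T_{(\vy,\eta)}\Xi_a+\mathrm{d}\Pi_L\big(T_{(\vy,\eta;\vx,\xi)}\Cc\big)=T_{(\vy,\eta)}(T^*Y),
\]
or equivalently that the fiber product $\{((\vy,\eta;\vx,\xi),(\vy,\eta)) : (\vy,\eta)\in\Xi_a\}$ is cut out transversally. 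Here $(\vy,\eta)=\Cc(\vx,\xi)$, and the base point $\vx=\vx_a\in X$ is the artifact point computed in Theorem~\ref{thm:C nonlin_art}.

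The key input is Theorem~\ref{thm:C nonlin_art}, which gives $\eta_s=\xi_a y'(s)\neq0$ at every point of $\Xi_a$ that comes from some $\vx_a\in X$. By the proof of Lemma~\ref{lem:bolk fail}, when $\eta_s\neq0$ the equations \eqref{eta-xi} determine $(t,\vsig)$ uniquely and smoothly. I would also check that the Jacobian of $\Pi_L$ is nonsingular there. Using the parametrisation \eqref{CR alt} by $(s,\Theta,t,\vsig)$, the derivative of $(\eta_s,\eta_\theta)$ in $(t,\vsig)$ is block triangular: the $\eta_\theta$ block is $t\hat\Theta''$, which is invertible because $\det\hat\Theta''\neq0$ and $t\neq0$. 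The $\eta_s$ row, after eliminating $t\vsig$, reduces to a derivative proportional to $\eta_s/t$, which is nonzero. Hence $\Pi_L$ is a local diffeomorphism $\Cc\to T^*Y$ near $(\vy,\eta;\vx_a,\xi_a)$.

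From the local diffeomorphism, $\mathrm{d}\Pi_L(T\Cc)=T(T^*Y)$, so the transversality condition above holds trivially. Transversal intersection is clean with excess $e=\dim(L\cap M)-\dim(\Cc^t\circ\Xi_a)=0$. Concretely, $L\cap M$ is locally the graph of $\Pi_L^{-1}$ restricted to $\Xi_a$. It is therefore a 3-dimensional manifold diffeomorphic to $\Xi_a$, and it maps injectively onto $\Cc^t(\Xi_a)$, which is 3-dimensional Lagrangian in $T^*X$ and, by Theorem~\ref{thm:C nonlin_art}, equals a piece of $N^*\CL_a$. The tangent space identity $T(L\cap M)=TL\cap TM$ is then automatic from transversality.

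Properness of the projection $L\cap M\to\Cc^t\circ\Xi_a$ follows from injectivity together with the continuity of the inverse. Working locally near $\vy_0$ and with a compact set of $\vx_a$ in $X$, the inverse is continuous because $(\vx_a,\xi_a)$ determines $(\vy,\eta)=\Cc(\vx_a,\xi_a)$ smoothly. Here we use the observation in the proof of Theorem~\ref{thm:C nonlin_art} that the denominator of $t_a$ stays bounded away from zero, so no points escape to infinity.

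The main obstacle is the nondegeneracy of $\Pi_L$, i.e.\ verifying carefully that $\eta_s\neq0$ gives an invertible Jacobian and not just pointwise injectivity. I would handle this by writing $\Cc$ in the coordinates $(s,\Theta,\tau=t\vsig,t)$. Then $\eta_\theta=\tau\hat\Theta''$ and $\eta_s=\tau\hat\Theta'y'/t$, and $\partial_t\eta_s=-\eta_s/t\neq0$ gives the invertibility directly.
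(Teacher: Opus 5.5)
Your proposal is correct and follows essentially the same route as the paper. The second claim of Theorem~\ref{thm:C nonlin_art} gives $\eta_s=\xi_a y'(s)\neq0$, so $\Cc$ is a local canonical graph there, $L\cap M$ is a $3$-dimensional graph over $\Xi_a$, and the excess is $\dim(L\cap M)-\dim(\Cc^t\circ\Xi_a)=0$; your transversality framing with the explicit Jacobian check in the coordinates $(s,\Theta,\tau,t)$ also supplies the immersion half of the canonical-graph property, which the paper attributes to Lemma~\ref{lem:bolk fail} even though that lemma by itself only yields injectivity.
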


\begin{proof}
Denote
\be
L=\cC^t\times\Xi_a,\ M=(T^*X\setminus \{0\})\times \Lambda_Y^*,
\ee
where $\cC^t$ is the canonical relation of $R^*$, and $\Lambda_Y^*$ is the diagonal in $(T^*Y\setminus \{0\})\times (T^*Y^\prime\setminus \{0\})$, where $Y^\prime$ is a copy of $Y$. Clearly,
\be\label{LcapM}
L\cap M=\big\{(\vx,\xi,\vy,\eta,\vy,\eta) : (\vx,\xi;\vy,\eta) \in \mathcal{C}^t, (\vy,\eta) \in \Xi_a\big\}.
\ee

According to the second assertion of Theorem~\ref{thm:C nonlin_art}, if $\omega=(\vx,\xi,\vy,\eta,\vy,\eta)\in L\cap M$, then $\eta_s=\xi\cdot y^\prime\not=0$. Together with Lemma~\ref{lem:bolk fail} this implies that $\cC$ and, therefore, $\cC^t$, is the local canonical graph.


To prove clean intersection (cf. Definition~\ref{def:clean int}), we first establish that $L\cap M$ is a manifold. It is clear that $\Xi_a$ is a smooth, three-dimensional manifold: one parameter (say, $\mu$) runs along the curve $\Gamma_1\cap\Gamma_2$, and two more parameters (say, $c_1,c_2$) are along the fibers. The desired result follows because $\cC$ is the local canonical graph.
Therefore, \eqref{LcapM} becomes
\be\label{LcapM1}
L\cap M=\big\{(\vx(v),\xi(v),\vy(v),\eta(v),\vy(v),\eta(v)) : v \in I_{\mathrm{loc}} \times \br^2\big\},\ v=(\mu,c_1,c_2),
\ee
where $I_{\mathrm{loc}}$ is some interval.

Next we show that $T(L\cap M)=TL\cap TM$. Pick any 
\be
\omega=(\vx,\xi,\vy,\eta,\vy,\eta)\in L\cap M.
\ee
Clearly, 
\be\bs
T_\omega L =& T_{(\vx,\xi,\vy,\eta)} \cC^t \times T_{(\vy,\eta)}\Xi_a,\\
T_\omega M=&\{(\dot\vx,\dot\xi,\dot\vy,\dot\eta,\dot\vy',\dot\eta'):\dot\vy=\dot\vy',\dot\eta=\dot\eta'\},
\end{split}
\ee
where the dots denote dual variables. Therefore,
\be\bs\label{TLcapTM}
T_\omega L\cap T_\omega M =& \{(\dot\vx,\dot\xi,\dot\vy,\dot\eta,\dot\vy,\dot\eta):(\dot\vx,\dot\xi,\dot\vy,\dot\eta)\in T_{(\vx,\xi,\vy,\eta)}\cC^t, (\dot\vy,\dot\eta)\in T_{(\vy,\eta)}\Xi_a\}.
\end{split}
\ee
The intersection in \eqref{TLcapTM} can be described as follows. Parameterize points $(\vy,\eta)\in \Xi_a$ using $v = (\mu,c_1,c_2)$ as above. Let $D$ be a linear combination of the partial derivatives $\pa_\mu$, $\pa_{c_1}$, and $\pa_{c_2}$. Take $(\dot\vy,\dot\eta)=D(\vy(v),\eta(v))$. Then 
\be
(\dot\vx,\dot\xi)=D(\vx(v),\xi(v)),\ (\vx(v),\xi(v))=\cC^t(\vy(v),\eta(v)).
\ee 
Therefore, $T_\omega L\cap T_\omega M = \text{span}(\pa_\mu \omega(v), \pa_{c_1} \omega(v),\pa_{c_2} \omega(v))$, where 
\be
\omega(v) = (\vx(v),\xi(v),\vy(v),\eta(v),\vy(v),\eta(v)), 
\ee
and, by \eqref{LcapM1}, this is precisely the set $T_\omega(L\cap M)$.

Consider the natural projection $L\cap M \to \cC^t \circ \Xi_a$. If we take $(\vx,\xi) \in N^*\CL_a = \cC^t\circ \Xi_a$, then its preimage in $L\cap M$ is the point $\omega=\big((\vx,\xi),(\vy,\eta),(\vy,\eta)\big)$, where $(\vy,\eta)=\cC(\vx,\xi)$. Since $\vy$ is near $\vy_0$, this point is locally unique. This can be easily seen from \eqref{CR alt}. Indeed, clearly there is only one point in the intersection $y(s)\in\Pi(\vx,\xi)\cap\ga$ near $\vy_0$. Then $t$ and $\Theta$ are determined uniquely. By construction, the matrix $\hat\Theta^{\prime}$ is nondegenerate (cf. \eqref{useful matr}), so we can find $\vsig$ and then $\eta$.

Likewise, the preimage of any compact subset of $\cC^t \circ \Xi_a$ under the natural projection is compact, and the map is proper. This proves that the composition $\Cc^t \circ \Xi_a$ is clean (cf. Definition~\ref{def:clean comp}).

We next use the relation \cite[page 458]{trev2} to calculate the excess $d$:
\be
\text{dim}(\Cc^t \circ \Xi_a) = \text{dim}(L\cap M) - d.
\ee
Since, by the above, $\text{dim}(N^*\CL_a) = \text{dim}(\Cc^t \circ \Xi_a) = \text{dim}(L\cap M) = 3$, this implies $d = 0$. This completes the proof.
\end{proof}

\section{Singularities in the reconstruction}\label{sec: sing recon}

We are now in a position to describe the singularities in FBP type reconstruction and the artifacts due to beam hardening.

Let $L_{\vy_0}$ be a single tangent ray and let Assumption~\ref{ass:sngl} hold. Let $\psi$ be a smooth cutoff centered on $\vy_0$. By Theorem~\ref{log_sing}, we obtain 
\be
R^* (\psi \mathcal{P} \CR f) \in I^{-\frac{9}{4} + p}(N^*\s)
\ee
for a properly supported $\Psi$DO $\CP$ of order $p$. These parts of the reconstruction behave in essentially the same way as the reconstruction from linear X-ray transform data (i.e., $R^*(\psi \mathcal{P} Rf)$), in terms of the type of singularities (conormal type) and the strength.

Consider next the double-tangent ray case. Theorems~\ref{thm:C nonlin_art} and \ref{thm:HS} imply the following result.

\begin{corollary}
\label{nonlin_art_thm}
Let $L_{\vy_0}$ be a double tangent ray as described in Section \ref{sec:dbl tang}, and let $\psi$ be a smooth cutoff centered on $\vy_0$ of sufficiently small support. Suppose Assumption~\ref{ass:dbl} holds. One has
\be
\label{nonlin_art_equ v2}
\WF(R^* (\psi\CP\mathcal{R}\mu)) \subset (\WF(f) \cap \mathcal{V}) \cup N^*\CL_a,
\ee
where $\CP$ is a properly supported $\Psi$DO.  
\end{corollary}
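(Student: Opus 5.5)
The plan is to split $\psi\CR\mu$ into a "linear-like" part and a nonlinear part, and propagate each through $R^*\CP$ with the H\"ormander--Sato Lemma. By \eqref{equ_sum}, near $\vy_0$ we have $\CR\mu=\mathcal{T}+\mathcal{N}$, where $\mathcal{T}=q_{00}+\sum_{i}q_{i0}(s-\phi_1)^{i/2}_{\imath_1}+\sum_j q_{0j}(s-\phi_2)^{j/2}_{\imath_2}$ and $\mathcal{N}$ collects the product terms with $i,j\ge1$. Choosing $\operatorname{supp}\psi\subset B_{\vy_0}(\delta)$, we obtain $\psi\CR\mu=\psi\mathcal{T}+\psi\mathcal{N}$, and the two pieces can be estimated separately. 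Here $\CP$ is pseudolocal, so $\WF(\CP\psi u)\subset\WF(\psi u)$, and the canonical relation of $R^*$ is $\Cc^t$.

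For the linear-like piece, Proposition~\ref{prop_sqrt} gives $\WF(\psi\mathcal{T})\subset N^*\Gamma_1\cup N^*\Gamma_2$, which equals $\WF(\psi Rf)$ locally. By Lemma~\ref{lem:Gamma_sm}, applied at each tangency point $\vx_k$, every $(\vy,\eta)\in N^*\Gamma_k$ is $\Cc(\vx,\xi)$ with $(\vx,\xi)\in N^*\s_k\subset\WF(f)$, and $\xi y'(s)\neq0$ by Assumption~\ref{ass:sngl}\ref{sng transv}. Lemma~\ref{lem:bolk fail} then shows that $\Cc^t$ maps this point to the single point $(\vx,\xi)$. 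Since $\xi\Theta=0$ and $\xi y'\neq0$, this point lies in $\mathcal{V}$. Hence $\Cc^t(\WF(\psi\mathcal{T}))\subset \WF(f)\cap\mathcal{V}$, with no Bolker-type artifacts $\Lambda$, because $\eta_s\neq0$ on these pieces.

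For the nonlinear piece, Lemma~\ref{lemma_prod} and \eqref{wf nl} give $\WF(\psi\mathcal{N})\subset \Xi_a\cup N^*\Gamma_1\cup N^*\Gamma_2$. The $N^*\Gamma_k$ contributions are handled as in the previous step. The $\Xi_a$ contribution maps into $N^*\CL_a$ by Theorem~\ref{thm:C nonlin_art}. To apply the H\"ormander--Sato Lemma (Theorem~\ref{thm:HS}) to the composition $R^*\CP$, we also need the composition with $\Cc^t$ to be well defined on these wavefront sets. Lemma~\ref{lem_clean} gives a clean composition with $\Xi_a$, and the second claim of Theorem~\ref{thm:C nonlin_art} ($\eta_s\neq0$) shows that $\Cc^t$ is a local canonical graph there. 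Combining the two pieces yields \eqref{nonlin_art_equ v2}.

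The main obstacle is the points of $\Xi_a$ whose image under $\Cc^t$ has base point outside $X$ or near $\ga$, i.e., where the denominator of $t_a$ in \eqref{ta xia} degenerates. I expect to handle these by restricting to $\vx\in X$, as the statement implicitly does. For directions with $t_a\to\infty$ there is no image point in $X$, so they contribute nothing to the wavefront set over $X$. Shrinking $\operatorname{supp}\psi$ keeps all other relevant quantities uniformly controlled.
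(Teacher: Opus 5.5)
Your argument is correct and takes essentially the same route as the paper. The paper obtains the corollary in one line from the H\"ormander--Sato Lemma (Theorem~\ref{thm:HS}) and Theorem~\ref{thm:C nonlin_art}, with the $N^*\Gamma_k$ pieces sent into $\WF(f)\cap\mathcal{V}$ via Lemmas~\ref{lem:Gamma_sm} and \ref{lem:bolk fail}, exactly as you describe; your appeal to the clean composition of Lemma~\ref{lem_clean} is not needed for a wavefront inclusion (it only matters for the order statement in Theorem~\ref{thm:precise}), and writing $\CP\psi$ instead of $\psi\CP$ is harmless because $\WF(\psi\CP\CR\mu)$ lies inside $\WF(\CR\mu)$ over $\operatorname{supp}\psi$.
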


\begin{remark}
Since we are localizing near $\vy_0$ by multiplying by $\psi$, i.e., near a double tangent ray, the artifacts in $\Lambda$ (i.e., due to the violation of the Bolker condition) do not come into play here by Assumption~\ref{ass:dbl} and Theorem~\ref{lin_art}. This corollary specifically addresses the artifacts due to non-linearities.
\end{remark}

Next we study the strength of the added singularities.

\begin{theorem}\label{thm:precise}
Suppose $L_{\vy_0}$ is a double tangent and Assumption~\ref{ass:dbl} holds. Let $\eta_{1,2}$ be the conormals to $\Gamma_{1,2}$ at $\vy_0$, respectively, as in Theorem \ref{nonlin_art_thm} and the preceding discussions. Let $\psi$ be a smooth cutoff centered on $\vy_0$ of sufficiently small support, and let $\CP$ be a properly supported $\Psi$DO of order $p$. Then,
\be\label{R* conorm}
R^* (\psi \CP\mathcal{N}) \in I^{-\frac{13}{4} + p}(N^*\CL_a\setminus N^*\s).
\ee
\end{theorem}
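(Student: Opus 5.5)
The plan is to use the standard calculus of FIOs acting on Lagrangian distributions, applied to the paired Lagrangian decomposition of $\mathcal{N}$ in \eqref{cal_N}, away from the two ``flow-out'' pieces $N^*\Gamma_1$ and $N^*\Gamma_2$.

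\emph{Step 1: reduce to a Lagrangian distribution.} By \eqref{cal_N} and Definition~\ref{pair_lag}, microlocally away from $N^*\Gamma_1\cup N^*\Gamma_2$ we have $\mathcal{N}\in I^{-11/4}(\Xi_a\setminus(N^*\Gamma_1\cup N^*\Gamma_2))$. Since $\CP$ is a properly supported $\Psi$DO of order $p$ and $\psi$ is smooth, $\psi\CP\mathcal{N}\in I^{-11/4+p}$ on the same set. Near $N^*\Gamma_k$ the distribution $\psi\CP\mathcal{N}$ is only a paired Lagrangian, of order $I^{-7/4+p}(N^*\Gamma_k\setminus\Xi_a)$ there. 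These directions are mapped by $\cC^t$ into $N^*\s$, by the last claim of Lemma~\ref{lem:Gamma_sm}, which is why $N^*\s$ is removed in \eqref{R* conorm}. Concretely, I would insert a microlocal cutoff $Q$ (a zeroth-order $\Psi$DO) that is equal to the identity on a conic neighbourhood of $\Xi_a$ minus small conic neighbourhoods of $N^*\Gamma_1\cup N^*\Gamma_2$. I would then write $\psi\CP\mathcal{N}=Q\psi\CP\mathcal{N}+(I-Q)\psi\CP\mathcal{N}$. By Theorem~\ref{thm:HS}, $R^*$ applied to the second term has wavefront set in $\cC^t$ of a neighbourhood of $N^*\Gamma_1\cup N^*\Gamma_2$, i.e., near $N^*\s$. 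Shrinking the cutoffs, this term is irrelevant for the statement on $N^*\CL_a\setminus N^*\s$.

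\emph{Step 2: transversal/clean composition.} I would apply the theorem on compositions of FIOs with Lagrangian distributions under clean intersection (H\"ormander, Thm.~25.2.3, or Duistermaat--Guillemin). For $A\in I^{m}(\cC^t)$ and $u\in I^{r}(\Xi_a)$ with clean composition of excess $e$, this gives $Au\in I^{m+r+e/2}(\cC^t\circ\Xi_a)$. Lemma~\ref{lem_clean} supplies clean composition with excess $e=0$. Theorem~\ref{thm:C nonlin_art} identifies $\cC^t\circ\Xi_a$ with (a piece of) $N^*\CL_a$, and it also shows that $\cC^t$ is a local canonical graph there, since $\eta_s\neq0$. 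Theorem~\ref{fio_thm} gives $R^*$ order $-1/2$. Hence $R^*(Q\psi\CP\mathcal{N})\in I^{-1/2-11/4+p}(N^*\CL_a)=I^{-13/4+p}(N^*\CL_a)$, microlocally away from $N^*\s$, which is \eqref{R* conorm}. One should check the order convention: orders of Lagrangian distributions shift by the FIO order only when the dimensions are handled consistently ($\dim X=\dim Y=3$). Here no correction arises because $n_X=n_Y$.

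\emph{Main obstacle.} The delicate point is Step 1: making rigorous that the paired-Lagrangian pieces near $N^*\Gamma_k\cap\Xi_a$ do not contaminate $N^*\CL_a\setminus N^*\s$. This requires checking that $\cC^t$ maps a small conic neighbourhood of $N^*\Gamma_k$ (within $\Xi_a$) into a small neighbourhood of $N^*\s$. I would verify this using the explicit formulas \eqref{ta xia}. Letting $c_2\to0$ forces $t_a\to t_1$ and $\xi_a\to\xi_1$, i.e., points of $N^*\s_1$, and symmetrically for $c_1\to0$. Continuity of $\cC^t$, which is a local graph, then gives the required separation. A second point to watch is properness and the locality of $\cC^t$ near $\vy_0$. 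For this I would use the small support of $\psi$ and Lemma~\ref{lem:bolk fail} together with Theorem~\ref{thm:C nonlin_art}, which together ensure that Bolker holds on the relevant set.
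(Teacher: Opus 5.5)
Your proposal is correct and follows essentially the same route as the paper. First you use \eqref{cal_N} and Definition~\ref{pair_lag} to place $\psi\CP\mathcal{N}$ in $I^{-11/4+p}$ on $\Xi_a\setminus(N^*\Gamma_1\cup N^*\Gamma_2)$. Then you apply the clean-composition theorem (the paper cites Tr\`eves, Ch.~VIII, Thm.~5.2), using the excess-zero cleanness of Lemma~\ref{lem_clean}, the image identification of Theorem~\ref{thm:C nonlin_art}, and the order $-1/2$ of $R^*$. Your microlocal cutoff $Q$, together with the check via \eqref{ta xia} that $c_2\to0$ (resp.\ $c_1\to0$) sends the image under $\cC^t$ into $N^*\s$, makes explicit what the paper leaves implicit in the phrase ``microlocally away from $\eta_{1,2}$.''
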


It is important to multiply by $\psi$ before applying $R^*$. Otherwise, in the full recontruction there can be an added, even stronger artifact at $(\vx_a,\xi_a)$ which arises because of the violation of the Bolker condition. This may happen when $\Lambda$ and $N^*\CL_a$ intersect.


Theorem~\ref{thm:precise} provides a more powerful characterization of the added artifacts than Theorem~\ref{nonlin_art_thm}. The latter describes the added wavefront, while the former asserts that the added singularities are conormal distributions of prescribed order. 

\begin{proof}
Since $\CP$ is a properly supported $\Psi$DO of order $p$, we have by \eqref{cal_N}
\be
\mathcal{P} \mathcal{N}\in I^{-\frac74 + p,-1}(\Xi_a, N^*\Gamma_1) + I^{-\frac74 + p,-1}(\Xi_a, N^*\Gamma_2)
\ee
Therefore,
\be
\psi\mathcal{P}\mathcal{N}\in I^{-\frac{11}4 + p}\big(\Xi_a\setminus (N^*\Gamma_1 \cup N^*\Gamma_2)\big)
\ee
near $\vy_0$ microlocally away from $\eta_{1,2}$.

By Theorem~\ref{thm:C nonlin_art} and Lemma~\ref{lem_clean}, we have
\be
\Cc^t \big(\Xi_a\setminus (N^*\Gamma_1 \cup N^*\Gamma_2)\big) \subset N^*\CL_a\setminus N^*\s,
\ee
and the composition is clean by Lemma \ref{lem_clean}. By Theorem \ref{fio_thm}, $R$ and hence $R^*$ is an FIO of order $-1/2$. We now apply \cite[Chapter VIII, Theorem 5.2]{trev2} to conclude that
\be
R^*(\psi\mathcal{P}\mathcal{N})\in I^{-\frac{13}4 + p}(N^*\CL_a\setminus N^*\s),
\ee
which finishes the proof. \end{proof}

We now quantify the strength of the artifacts in $N^*\CL_a$ in the Sobolev scale. The embedding Theorem~\ref{thm:embedding} immediately implies the following result.
\begin{corollary}\label{corr_sob}
Suppose $L_{\vy_0}$ is a double tangent and Assumption~\ref{ass:dbl} holds. Let $\CL_a$ be the artifact surface defined in \eqref{art surface}. Pick any $(\vx_a,\xi_a) \in N^*\CL_a\backslash N^*\s$. Let $\CP$ be a properly supported $\Psi$DO of order $p$. Then,
\be
R^* (\psi\CP \mathcal{N}) \in H^{\alpha - p}
\ee
microlocally near $(\vx_a,\xi_a)$ for any $\alpha < 5/2$, where $\psi$ is a smooth cutoff centered on $\vy_0$ of sufficiently small support.
\end{corollary}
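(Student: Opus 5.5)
The plan is to combine Theorem~\ref{thm:precise} with the embedding Theorem~\ref{thm:embedding}, applied microlocally near $(\vx_a,\xi_a)$. By Theorem~\ref{thm:precise},
\[
R^*(\psi\CP\mathcal{N})\in I^{-\frac{13}{4}+p}(N^*\CL_a\setminus N^*\s).
\]
Since $(\vx_a,\xi_a)\notin N^*\s$, we can localize away from $N^*\s$. Concretely, choose a cutoff $\varphi\in C_c^\infty(\br^3)$ with $\varphi(\vx_a)\neq0$, and a zeroth-order symbol $u(\xi)$, homogeneous of degree zero and supported in a small conic neighborhood of $\xi_a$. Both supports are taken small enough that $\supp\varphi\times\supp u$ avoids $N^*\s$.

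On this microlocal region the distribution is a Lagrangian distribution of order $\beta=-\frac{13}{4}+p$ associated with the smooth Lagrangian $N^*\CL_a$. Here $\CL_a$ is a smooth 2-D surface near $\vx_a$: by Theorem~\ref{thm:C nonlin_art} and Lemma~\ref{lem_clean}, $\cC^t$ is a local canonical graph there and maps $\Xi_a$ onto $N^*\CL_a$. Hence, after microlocalization, the distribution can be written as a global conormal distribution in $I^{\beta}(N^*\CL_a)$, with the symbol cut off away from the bad set.

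Next I would apply Theorem~\ref{thm:embedding} with $n_X=3$. The localized piece lies in $H^{\alpha'}$ provided $\beta+3/4+\alpha'<0$, that is, $-\frac{13}{4}+p+\frac34+\alpha'<0$, which means $\alpha'<\frac52-p$. Setting $\alpha'=\alpha-p$, this holds for every $\alpha<5/2$. By Definition~\ref{def:SobolevWF}, the function $(1+|\xi|^2)^{(\alpha-p)/2}u(\xi)\mathcal{F}(\varphi R^*(\psi\CP\mathcal{N}))$ is then in $L^2$, which is the desired microlocal statement.

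The main technical point is the first step: justifying that microlocal membership in $I^\beta(N^*\CL_a\setminus N^*\s)$ lets us apply the global embedding theorem. I would handle it by noting that applying the zeroth-order $\Psi$DO $\varphi\, u(D)$ preserves the Lagrangian order and kills the part of the wavefront set near $N^*\s$. The result is a genuine element of $I^\beta(N^*\CL_a)$ with compact support, to which Theorem~\ref{thm:embedding} applies directly.
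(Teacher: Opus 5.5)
Your proposal is correct and matches the paper, which states that Corollary~\ref{corr_sob} follows immediately from Theorem~\ref{thm:precise} combined with the embedding Theorem~\ref{thm:embedding} with $n_X=3$; that is, $-\frac{13}{4}+p+\frac34+(\alpha-p)<0$, i.e.\ $\alpha<5/2$. Your extra microlocalization step, cutting off near $(\vx_a,\xi_a)$ and away from $N^*\s$ before invoking the global embedding, only makes explicit a step the paper leaves implicit.
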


Let $L_{\vy_0}$ and $\psi$ be as in Theorem~\ref{thm:precise}, and let $\mathcal{T}$ be as in \eqref{equ_sum}. By Theorem~\ref{log_sing}, and using similar arguments to the proof above, we have the same inclusion as in the single tangency case:
\be
R^* (\psi \CP \mathcal{T}) \in I^{-\frac{9}{4} + p}(N^*\s).
\ee
Thus, the singularities in the reconstruction which correspond to those in $\WF(f)$ (i.e., the desired singularities) are stronger than the artifacts in $N^*\CL_a$. It has been established in the literature that the artifacts in $\Lambda$ (i.e., due to the failure of Bolker) are the same strength as those visible in $\WF(f)$ in the reconstruction \cite[section 8]{flu}. Therefore, the singularities in the reconstruction can be broken into three categories, those singularities which are visible in $\WF(f)$, the artifacts induced by $\Lambda$, and weaker artifacts in $N^*\CL_a$ caused by the non-linearities in the data.

For example, let us set $p = 1 = -\mathcal{O}(R^*R)$, as is customary so the singularities in the reconstruction which align with $WF(f)$ are of the same strength as the singularities of $f$. Then, in the scale of Sobolev spaces, and following Definition~\ref{def:SobolevWF}, the singularities in the reconstruction in $\WF(f)\cap \mathcal{V}$ or $\Lambda$ are in $H^{\alpha}$ microlocally for $\alpha < 1/2$ (not including $\alpha = 1/2$), while the singularities in $N^*\CL_a$ are in $H^{\beta}$ for $\beta < 3/2$, i.e., the singularities in the reconstruction due to non-linearities are one derivative smoother than the remaining singularities.

\section{Ridge Singularities of $\s$}\label{sec:folds}

Now that we have established our central theory which deals with smooth $\s$, we generalize these ideas to non-smooth $\s$, specifically when $\s$ adopts a ``ridge," which can be described using the intersection of two transversally intersecting smooth manifolds, similar to the edges of a cube. Such ridges occur in application, and are thus important to consider. The theory we present here uses many of the same ideas as the previous sections when $\s$ was smooth, and we reformulate these ideas in this section to address such non-smooth $\s$. We first define the notion of ``generalized tangency."

\begin{definition}[Generalized tangency] \label{def:gen tan} We say that $L_{\vy}$ is tangent to $\s$ at $\vx$ in the generalized sense if there exists $\xi\in N_{\vx}^*\s\setminus\{0\}$ such that $\xi\Theta=0$. 
\end{definition}

Clearly, the above definition reduces to the conventional tangency if $\s$ is smooth near $\vx$. The idea of this definition is that it generalizes the notion of tangency to the cases where $\s$ is not necessarily smooth at $\vx$, and when $\mu$ has singularities in multiple directions at a single point. Before, the $\mu$ we considered were conormal distributions associated with codimension-one surfaces. Hence, at each point the fiber of the conormal bundle was one-dimensional. In what follows, 
{when we say $L_\vy$ is tangent to $\s$, this now includes the notion of generalized tangency, not just conventional.} The boundaries of the domains $\pa\Omega_i$ may be nonsmooth along curves where two smooth faces of $\pa\Omega_i$ meet. We call these curves ``ridges." These ridges are analogous to the edges of a cube. Consider one such ridge, denoted $\ed$. Suppose $\tau$ is locally given by the transverse intersection of two smooth surfaces with defining functions $G_k(\vx)=0$, $k=1,2$. We assume that $\dd G_1(\vx)$ and $\dd G_2(\vx)$ are linearly independent when $\vx\in\ed$. The set of singularities of $Rf$ associated with the ridge, which we denote $\Gamma$ as in the previous sections, is determined by the condition that $L_{\vy}$ intersects $\ed$: 
\be\label{Ga def edge}
\Gamma=\{\vy\in B_{\vy_0}(\delta):\ L_{\vy}\cap\ed\neq\varnothing\}.
\ee
It is easy to see that in this case, $L_{\vy}$ is tangent to $\s$ in the generalized sense. This is because the {conormal} bundles to $L_\vy$ and $\s$ at $\vx$ in this case are both planes through $\vx$, and hence must intersect generating a singularity in the data. An equation for $\Gamma$ is thus obtained by solving the equations
\be\label{edge rt}
G_k(y(s)+t\Theta)=0,\ k=1,2.
\ee
These equations allow us to eliminate $t$ and obtain the relation between $s$ and $\Theta$.

The following assumption is a counterpart of the Assumption~\ref{ass:sngl} in the single (generalized) tangency case.

\begin{assumption}\label{ass:edge}$ $
Suppose $L_{\vy_0}$ intersects an ridge $\ed$ of $\s$ at $\vx_0$.
\begin{enumerate}
\item\label{transv int} $\tau$ is locally given by the transverse intersection of two smooth surfaces $G_k(\vx)=0$, $k=1,2$, i.e. $\dd G_1(\vx_0)$ and $\dd G_2(\vx_0)$ are linearly independent.
\item\label{GTheta} $\dd G_k(\vx_0)\Theta_0\neq0$, $k=1,2$.
\item\label{lin indep 3} The vectors $y^\prime(s_0),\Theta_0,\ed^\prime(\vx_0)$ are linearly independent.
\end{enumerate}
\end{assumption}

\begin{lemma}\label{lem:Gamma edge} Suppose Assumption~\ref{ass:edge} holds  and $\vx_0$ is the only point where $L_{\vy_0}$ is tangent to $\s$. Then $\Gamma$ is smooth and can be parameterized by $s = \phi(\Theta)$, where $\phi$ is smooth. Also, $\eta_s\neq0$ for any $\eta\in N_{\vy}^*\Gamma\setminus\{0\}$, $\vy\in B_{\vy_0}(\de)$.
\end{lemma}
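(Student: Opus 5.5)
Following the proof of Lemma~\ref{lem:Gamma_sm}, we apply the implicit function theorem to the system \eqref{edge rt}, $G_k(y(s)+t\Theta)=0$, $k=1,2$, solving for $(t,s)$ as functions of $\Theta$ near $(s_0,t_0,\Theta_0)$, where $\vx_0=y(s_0)+t_0\Theta_0$. The relevant $2\times2$ Jacobian with respect to $(t,s)$ at $(s_0,t_0,\Theta_0)$ is
\be
J=\begin{pmatrix} \dd G_1(\vx_0)\Theta_0 & \dd G_1(\vx_0)y^\prime(s_0)\\ \dd G_2(\vx_0)\Theta_0 & \dd G_2(\vx_0)y^\prime(s_0)\end{pmatrix}.
\ee
The main step is to show $\det J\neq0$ from Assumption~\ref{ass:edge}. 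A nonzero element of the kernel of $J^{\top}$, i.e.\ $(a_1,a_2)\neq 0$ with $\omega:=a_1\dd G_1(\vx_0)+a_2\dd G_2(\vx_0)$ satisfying $\omega\Theta_0=\omega y^\prime(s_0)=0$, is what we must exclude. Since $\dd G_1,\dd G_2$ are linearly independent (item~\ref{transv int}), $\omega\neq0$, and $\omega$ annihilates $\ed^\prime(\vx_0)$ because $\ed^\prime$ lies in the kernel of both $\dd G_k$. Then $\omega$ annihilates $y^\prime(s_0),\Theta_0,\ed^\prime(\vx_0)$, which span $\br^3$ by item~\ref{lin indep 3}, a contradiction. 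Hence $\det J\ne0$ and we obtain smooth $t=t(\Theta)$, $s=\phi(\Theta)$, so $\Gamma$ is locally the graph $s=\phi(\Theta)$ and is a smooth 2-D manifold. Uniqueness of the intersection point (hypothesis that $\vx_0$ is the only tangency point) ensures that no other branch of $\Gamma$ enters $B_{\vy_0}(\de)$ for $\de$ small. Item~\ref{GTheta} is not needed for the determinant; I expect it will be used later (e.g.\ for the local form of $R\mu_E$ near the ridge), though it also guarantees that the line crosses each face transversally.

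For the claim on $\eta_s$, note that since $\Gamma$ is the graph $s=\phi(\Theta)$, any conormal is a multiple of $\dd(s-\phi(\Theta))=(1,-\dd_\Theta\phi)$, whose $s$-component is $1\neq0$. So $\eta_s\ne0$ is automatic from the graph representation; nonzero conormals are nonzero multiples of this covector. Alternatively, and more in the spirit of the paper, we can compute $\eta$ as in \eqref{eta tang}: eliminating $t$ from the system gives $\eta=\omega(y^\prime\ t\Theta_\al^\top\ t\Theta_\bt^\top)$ with $\omega$ the combination of $\dd G_k$ annihilating $\Theta$ (the covector with $\omega\Theta=0$, conormal to $\ed$, as in Definition~\ref{def:gen tan}). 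Then $\eta_s=\omega y^\prime$, which is nonzero by the same linear independence argument (item~\ref{lin indep 3}). This also identifies $(\vy,\eta)=\cC(\vx,\omega)$, which will be useful later.

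The only delicate point is the linear-algebra step showing $\det J\ne0$; everything else is routine, with continuity extending the nondegeneracy from $\vy_0$ to $B_{\vy_0}(\de)$.
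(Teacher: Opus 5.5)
Your proof is correct and follows the paper's route. You apply the implicit function theorem to \eqref{edge rt} and deduce nondegeneracy of the $2\times 2$ Jacobian from the linear independence of $y^\prime(s_0),\Theta_0,\ed^\prime(\vx_0)$; your annihilating-covector argument is the dual form of the paper's computation of $a_1b_2-a_2b_1$ in the basis $e_1,e_2,e_3=\ed^\prime$, and your alternative computation of $\eta$ by eliminating $t$ is exactly the paper's \eqref{eta edge}--\eqref{etas zero}.

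Your main argument for the $\eta_s$ claim differs slightly and is cleaner. Once $\Gamma$ is the graph $s=\phi(\Theta)$, every conormal is a multiple of $(1,-\dd_\Theta\phi)$, so $\eta_s\neq0$ is automatic. This also shows that the condition $\dd G_2\Theta\neq0$, which the paper uses only to divide by it in \eqref{eta edge}, is not needed for this lemma.
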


\begin{proof}
Computing the Jacobian matrix of the system \eqref{edge rt}, we see that it equals
\be\label{jacob edge}
\begin{pmatrix} \dd G_1 \\ \dd G_2\end{pmatrix}\begin{pmatrix} y^\prime & \Theta \end{pmatrix}.
\ee

Let $e_k$, $k=1,2,3$, be nonzero vectors such that $G_ie_j=\de_{ij}$, $\forall i,j\in\{1,2\}$, and $G_ke_3=0$, $k=1,2$. Clearly, as $e_3$, we can take $\ed^\prime$, the tangent vector to $\ed$ at $\vx_0$. Then $y^\prime=\sum_k a_ke_k$ and $\Theta=\sum_k b_ke_k$. The value of the determinant is 
$a_1b_2-a_2b_1$. Suppose the determinant is zero. Then $a_1b_2=a_2b_1$, so the vectors $a_1e_1+a_2e_2$ and $b_1e_1+b_2e_2$ are parallel. This, in turn, implies that the vectors $y^\prime,\Theta,\ed^\prime$ are linearly dependent. This contradiction with Assumption~\ref{ass:edge}\eqref{lin indep 3} implies that the matrix in \eqref{jacob edge} is nondegenerate. Thus, like in the convex boundary case, $\Gamma$ is locally parameterized by $s=\phi(\Theta)$ for some smooth $\phi$.

Next we compute a generic vector $\eta\in N_{\vy}^*\Gamma\setminus\{0\}$, where $\vy\in\Gamma$. Denote $H_k(\vy,t):=G_k(y(s)+t\Theta)$. To find $\eta$ we solve $H_2(\vy,t)=0$ for $t$ in terms of $\vy$ and substitute into $H_1$. Then
\be\label{eta edge}\begin{split}
&\eta=\dd_{\vy}H_1-\frac{\pa_t H_1}{\pa_t H_2}\dd_{\vy} H_2=\Big(\dd G_1-\frac{\dd G_1\Theta}{\dd G_2\Theta}\dd G_2\Big)(y^\prime\ \Theta_\al^\top\ \Theta_\bt^\top),
\end{split}
\ee
where the last factor is a $3\times 3$ matrix. Note that $\dd G_2\Theta\neq0$ by Assumption~\ref{ass:edge}\eqref{lin indep 3}.The $\eta_s$ component of $\eta$ arises via the application of the covector in parentheses to $y^\prime$. Then $\eta_s\neq0$ provided that
\be\label{etas zero}
(\dd G_1 y^\prime)(\dd G_2\Theta)-(\dd G_1\Theta)(\dd G_2y^\prime)\not=0.
\ee
Using the vectors $e_k$, $k=1,2,3$, as above, we see that the expression in \eqref{etas zero} is exactly $a_1b_2-a_2b_1$ which we showed to be nonzero.  
\end{proof}

Our argument shows that when we consider a ray $L_{\vy_0}$ passing through an ridge $\ed$ of $\s$, the noncriticality condition is that the triple $y^\prime(s_0),\Theta_0,\ed^\prime(\vx_0)$ be linearly independent. The latter is a counterpart to the condition that $\Pi(\vx_0,\xi)$ not be tangent to $\ga$ at $y(s_0)$ as considered previously in the case of smooth $\s$ (see Assumption~\ref{ass:sngl}\eqref{sng transv}). In all cases, however, we have $\eta_s\not=0$ if $\eta\in N_{\vy}^*\Gamma$.

Similarly to Lemma~\ref{lem:sing-ty}, we easily obtain the following result.

\begin{lemma}\label{lem:edge sing} Suppose Assumption~\ref{ass:edge} holds  and $\vx_0$ is the only point where $L_{\vy_0}$ is tangent to $\s$. One has 
\be
\label{dist_R edge}
R\mu_E(\vy) =  g_1(\vy,E) + g_2(\vy,E)(s - \phi(\Theta))_+,\ \vy\in B_{\vy_0}(\delta),
\ee
where $g_1$ and $g_2$ are smooth; $g_2(\vy,E) \not\equiv 0$, $\vy\in\tau$; $s = \phi(\Theta)$ is the local equation of $\Gamma$ near $\vy_0$; and $\phi$ is smooth.  
\end{lemma}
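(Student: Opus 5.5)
Near $\vx_0$ the ridge $\ed$ is the transverse intersection of two smooth faces. Locally, $\Omega_{i_0}$ is a wedge $\{G_1<0,\ G_2<0\}$ (after possibly flipping signs of $G_k$); other orientations are handled the same way. So we split
\[
R\mu_E(\vy)=\int \Phi_{i_0}(y(s)+t\Theta,E)\,\chi_{\{G_1<0,G_2<0\}}(y(s)+t\Theta)\,\dd t+(\text{rest}).
\]
The rest collects the other domains and the parts of $L_\vy$ away from $\vx_0$. Since $\vx_0$ is the only point of (generalized) tangency, every other intersection of $L_{\vy_0}$ with $\s$ is transversal, so the rest is smooth in $\vy$ near $\vy_0$ and is absorbed into $g_1$. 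This is exactly as in the proof of Lemma~\ref{lem:sing-ty}.

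For the local term, Assumption~\ref{ass:edge}\eqref{GTheta} and the implicit function theorem give, for $\vy$ near $\vy_0$, unique roots $t=\tau_k(\vy)$ of $G_k(y(s)+t\Theta)=0$ near $t_0$, with $\tau_k$ smooth. The sign of $G_k$ along the line changes at $\tau_k$, and the direction of the change is fixed by the sign of $\dd G_k\Theta$. The set of $t$ in the wedge near $t_0$ is therefore an interval bounded by $\tau_1$ and $\tau_2$, or a half-line truncated by a cutoff, depending on these signs.

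Case $\dd G_1\Theta$ and $\dd G_2\Theta$ of opposite sign. The line enters through one face and exits through the other, so the local segment is $\{t:\tau_1\le t\le\tau_2\}$, possibly with the indices swapped. It is nonempty exactly when $\tau_2-\tau_1\ge0$. Then
\[
\int_{\tau_1}^{\tau_2}\Phi\,\dd t=(\tau_2-\tau_1)_+\,\tilde g(\vy,E)
\]
with $\tilde g$ smooth, by the fundamental theorem of calculus and Hadamard's lemma.

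Case $\dd G_1\Theta$ and $\dd G_2\Theta$ of equal sign. The segment is $\{t\ge\max(\tau_1,\tau_2)\}$ up to a smooth cutoff. Using $\max(a,b)=b+(a-b)_+$, the integral is a smooth function minus $(\tau_1-\tau_2)_+$ times a smooth factor.

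In both cases we get smooth $+$ smooth$\cdot(\pm(\tau_1-\tau_2))_+$. The remaining step is to identify the zero set $\{\tau_1=\tau_2\}$ with $\Gamma$ and rewrite $\tau_1-\tau_2$ as $(s-\phi(\Theta))$ times a nonvanishing factor. Indeed, $\tau_1(\vy)=\tau_2(\vy)$ iff $L_\vy$ meets $\ed$, which by \eqref{Ga def edge} and Lemma~\ref{lem:Gamma edge} is the set $s=\phi(\Theta)$.

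I expect the main point to be showing that $\tau_1-\tau_2$ is a \emph{defining} function, i.e. that $\pa_s(\tau_1-\tau_2)\neq0$. Differentiating $G_k(y(s)+\tau_k\Theta)=0$ gives $\pa_s\tau_k=-\dd G_k y'/\dd G_k\Theta$. Hence
\[
\pa_s(\tau_1-\tau_2)=\frac{(\dd G_2 y')(\dd G_1\Theta)-(\dd G_1y')(\dd G_2\Theta)}{(\dd G_1\Theta)(\dd G_2\Theta)},
\]
whose numerator is, up to sign, the quantity $a_1b_2-a_2b_1$ in \eqref{etas zero}. That quantity is nonzero by Assumption~\ref{ass:edge}\eqref{lin indep 3}, as shown in the proof of Lemma~\ref{lem:Gamma edge}. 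By Hadamard's lemma, $\tau_1-\tau_2=c(\vy)(s-\phi(\Theta))$ with $c$ smooth and nonvanishing. The positive part then becomes $|c|\,(s-\phi)_\pm$, and the sign is absorbed by choosing the side, or by swapping indices and renaming, to match \eqref{dist_R edge}.

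Finally, $g_2(\vy,E)=|c|\,\tilde g$ restricted to $\Gamma$ is, to leading order, $|c|\,\Phi_{i_0}(\vx,E)$. Since $\Phi_{i_0}>0$, this gives $g_2\not\equiv0$.
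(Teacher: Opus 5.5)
Your proposal is correct and follows the paper's route. You localize near $\vx_0$, write the wedge contribution through the two crossing parameters $\tau_1,\tau_2$, obtain a smooth function plus a smooth multiple of $(\pm(\tau_1-\tau_2))_+$, and use that $\tau_1-\tau_2$ is a defining function for $\Gamma$. You also supply what the paper only asserts: the same-sign configuration (line entering the solid through the ridge), and the explicit check $\partial_s(\tau_1-\tau_2)\neq0$ via the quantity $a_1b_2-a_2b_1$ from Lemma~\ref{lem:Gamma edge}.

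One small fix at the end: swapping indices does not change which side of $\Gamma$ carries the singular term. If $c<0$, instead use $x_-=x_+-x$ to rewrite $|c|\,(s-\phi)_-$ as $|c|\,(s-\phi)_+$ plus a smooth term that is absorbed into $g_1$.
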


\begin{proof}
Let $t_k(\vy)$ determines the point where the ray $L_{\vy}$ intersects the $k$th face, $k=1,2$. Thus, $t_k$ solves $G_k(y(s)+t_k\Theta)=0$. In particular, $t_1(\vy)=t_2(\vy)$ if $\vy\in\ed$. As is easily seen, up to a locally smooth function, the value of the ray transform can be written as follows
\be
Rf(\vy)\sim\int_{t_1(\vy)}^{t_2(\vy)} f(y(s)+t_k\Theta)\dd t,\ t_1(\vy)<t_2(\vy);\quad
Rf(\vy)\sim 0,\ t_1(\vy)\ge t_2(\vy).
\ee
Using that $f$ is locally smooth, Assumption~\ref{ass:edge}\eqref{GTheta}, and the discussion preceding Lemma~\ref{lem:edge sing}, we conclude that
\be
\label{edge 1st}
Rf(\vy) =  g_1(\vy) + g_2^\prime(\vy)(t_2(\vy)-t_1(\vy))_+=  g_1(\vy) + g_2(\vy)(s - \phi(\Theta))_+,\ \vy\in B_{\vy_0}(\delta),
\ee
where $g_2^\prime(\vy_0)\neq0$ and $g_2(\vy_0)\neq0$ if $f(\vx_0)\neq0$. Here we have also used that $t_2(\vy)-t_1(\vy)=0$ is another local equation for $\ed$ and $\pa_s(t_2(\vy_0)-t_1(\vy_0))\neq 0$. Replacing $f(\vx)$ with $\mu(\vx,E)$ proves the lemma.
\end{proof}

Using Lemma \ref{lem:edge sing}, the derivation in Section~\ref{ssec:conorm} can be easily extended to the case of ridge singularities and we get the following analog of Corollary~\ref{log_sing} (using Proposition~\ref{prop_sqrt} to compute the order of the distribution). 

\begin{corollary}\label{cor:log_sing edge}
Suppose $L_{\vy_0}$ passes through an ridge $\ed$ of $\s$ and Assumption~\ref{ass:edge} holds. We have
\be
\label{sqrt_sing edge}
\mathcal{R}\mu(\vy) = h_0(\vy) + h_1(\vy) (s - \phi(\Theta))_+,\ \vy\in B_{\vy_0}(\delta),
\ee
where $h_0$ and $h_1$ are smooth, and $\delta$ is sufficiently small. Further,
\be\label{Rmu distr edge}
\mathcal{R}\mu \in \dot{I}^{-2}(\Gamma)=I^{-9/4}(N^*\Gamma)
\ee
locally near $\vy_0$.
\end{corollary}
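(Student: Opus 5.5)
The plan is to mirror the proof of Theorem~\ref{log_sing}, using Lemma~\ref{lem:edge sing} in place of Lemma~\ref{lem:sing-ty}. By Lemma~\ref{lem:edge sing},
\[
R\mu_E(\vy)=g_1(\vy,E)+g_2(\vy,E)(s-\phi(\Theta))_+,\qquad \vy\in B_{\vy_0}(\delta).
\]
I define
\[
H(\vy,p)=-\log\Big(\int_0^{E_{\mathrm{mx}}}P(E)e^{-(g_1(\vy,E)+g_2(\vy,E)p)}\dd E\Big).
\]
The integral is strictly positive, since $P\ge0$ is integrable with positive total mass and the exponential is positive. It also depends smoothly on $(\vy,p)$, because $g_1$ and $g_2$ are smooth and bounded for $\vy$ near $\vy_0$ and $p$ in a compact set. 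Hence $H$ is smooth, and
\[
\mathcal R\mu(\vy)=H\big(\vy,(s-\phi(\Theta))_+\big).
\]

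The simplification compared with the convex case is that the singular variable now enters linearly, not through a square root, so no even/odd splitting is required. By Hadamard's lemma, $H(\vy,p)=H(\vy,0)+p\,\tilde h(\vy,p)$ with
\[
\tilde h(\vy,p)=\int_0^1\pa_pH(\vy,\tau p)\,\dd\tau
\]
smooth. Substituting $p=(s-\phi(\Theta))_+$ gives $h_0(\vy)=H(\vy,0)$ and the term $(s-\phi)_+\,\tilde h(\vy,(s-\phi)_+)$.

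The one point needing care is that $\tilde h(\vy,(s-\phi)_+)$ is not itself smooth. To handle this I use $x_+\,k(x_+)=x_+\,k(x)$, which holds because both sides vanish for $x\le0$ and agree for $x>0$. Therefore I set
\[
h_1(\vy):=\tilde h(\vy,s-\phi(\Theta)),
\]
which is smooth, and this yields \eqref{sqrt_sing edge}.

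For the conormal order, I apply Proposition~\ref{prop_sqrt} with $r=1$ to $h_1(\vy)(s-\phi(\Theta))_+$. This gives membership in $\dot I^{-2}(\Gamma)=I^{-1-5/4}(N^*\Gamma)=I^{-9/4}(N^*\Gamma)$. The term $h_0$ is smooth and does not affect the conclusion. Here $\Gamma$ is a smooth hypersurface with defining function $s-\phi(\Theta)$ by Lemma~\ref{lem:Gamma edge}, so the proposition applies.

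There is no serious obstacle in this argument. The only step requiring attention is the one above: rewriting $\tilde h(\vy,(s-\phi)_+)$ as a genuinely smooth function of $\vy$ so that the coefficient $h_1$ is smooth.
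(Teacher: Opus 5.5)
Your proof is correct and takes essentially the paper's route. The paper simply says that the construction of Section~\ref{ssec:conorm}, namely the function $H(\vy,p)$ built from Lemma~\ref{lem:edge sing}, carries over directly, and that Proposition~\ref{prop_sqrt} with $r=1$ gives the order $-9/4$. Your use of Hadamard's lemma together with the identity $x_+k(x_+)=x_+k(x)$ is the natural version of the paper's even/odd splitting when the singular variable enters linearly, and it makes the smoothness of $h_1$ explicit.
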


To study the added singularities along more general double tangent rays, we extend the derivation in Section~\ref{sec:dbl tang}. In this more general setting, if $L_{\vy}$ is  a double tangent ray, then each point of tangency can be either conventional or generalized. 

Thus, two additional cases arise: (1) $L_{\vy_0}$ is tangent to one smooth patch of $\s$ in the conventional sense and one non-smooth ridge, and (2) $L_{\vy_0}$ passes through two ridges of $\s$. In all three cases  (including the conventional double tangency case in Section~\ref{sec:dbl tang}), the leading singularity of the data due to nonlinearity is analogous to \eqref{double_lin} and is given by:
\be
\label{double_lin gen}
\mathcal{N}(\vy) \sim (s - \phi_1(\Theta))_{\imath_1}^{r_1}(s - \phi_2(\Theta))_{\imath_2}^{r_2},\ \vy\in B_{\vy_0}(\delta),
\ee
where $\mathcal{N}$ represents the non-linear ``component" of the data which includes products of distributions, using the same notation as before in \eqref{cal_N}. Here $r_k=1/2$ in the case of a conventional tangency and $r_k=1$ in the case of an ridge, $k=1,2$. In the first case, $\Gamma_k$ corresponds to a smooth patch of $\s$, and in the second -- to a {ridge} $\ed\in\s$. 

Lemma~\ref{lem:Gamma edge} and Assumption~\ref{ass:sngl} imply that if $L_{\vy_0}$ is a double tangent ray, then, locally, (1) $\Gamma=\Gamma_1\cup\Gamma_2$, where $\Gamma_1$ and $\Gamma_2$ are smooth, (2) $\vy_0\in\Gamma_1\cap\Gamma_2$, and (3) $\eta_s\not=0$ if $\eta\in N_{\vy}^*\Gamma_k$ for at least one of the $k$. This holds regardless of the nature of each tangency (conventional or generalized) at $\vx_1$ and $\vx_2$. 

The first two claims are obvious. Consider claim (3). If both tangencies are conventional, Assumption~\ref{ass:sngl} implies that $\eta_s^{(k)}\not=0$, $k=1,2$, and Lemma~\ref{lem:bolk fail} asserts that the intersection is transverse. If one of the tangencies is generalized, say $\Gamma_1$, then $\eta_s^{(1)}\not=0$ by Lemma~\ref{lem:Gamma edge}. If $\Gamma_2$ corresponds to a conventional tangency and $\eta_s^{(2)}=0$, then $\eta^{(1)}$ and $\eta_s^{(2)}$ are not parallel, and the intersection is transversal. If $\Gamma_2$ corresponds to a conventional tangency and $\eta_s^{(2)}\not=0$, Lemma~\ref{lem:bolk fail} applies. If $\Gamma_2$ corresponds to a generalized tangency, then $\eta_s^{(2)}\not=0$ and Lemma~\ref{lem:bolk fail} again applies. 

Thus, when working locally near such $\vy_0$, the non-linear component of the data behaves microlocally like a product of two conormal distributions with singularities on transversally intersecting manifolds. This is similar to the previous sections, although the order of the distributions changes depending on the type of edge. With this established, analogously to \eqref{cal_N}, we have
\be
\label{cal_N v2}
\mathcal{N}\in I^{-(r_1+\frac54),-(r_2+\frac12)}\big(N^*(\Gamma_1\cap\Gamma_2), N^*\Gamma_1\big) + I^{-(r_2+\frac54),-(r_1+\frac12)}\big(N^*(\Gamma_1\cap\Gamma_2), N^*\Gamma_2\big),
\ee
locally near $\vy_0$, and we have calculated the orders of the distributions using \eqref{u1u2} with $\nu_k=-(r_k+\tfrac54)$. Note how the orders decrease (i.e., the singularities become smoother) when the double tangency involves ridges.

Lemma~\ref{lem_clean} only requires that $\Gamma_1$, $\Gamma_2$ be smooth and the intersection $\Gamma_1\cap\Gamma_2$ be transversal. As we established, this holds in all three cases. Therefore, in all three cases, the added singularity in the reconstructed image is described similarly to \eqref{R* conorm}: 
\be\label{R* conorm gen}
R^* (\psi \mathcal{P}\mathcal{N}) \in I^{-(r_1+r_2+\frac94) + p}\big(N^*\CL_a\setminus N^*\s\big),
\ \CL_a:=\cup_{\vy\in\Gamma_1\cap\Gamma_2}\big(L_{\vy}\cap X\big).
\ee

As before, $\CL_a$ is a smooth surface. Generically, $\CL_a$ can be parameterized by $s$. However, in this more general setting, the details are somewhat cumbersome and not illuminating, so we do not present them here. For our purposes, Lemma~\ref{lem_clean} and the result \eqref{R* conorm gen} are sufficient.

Similarly, Theorem~\ref{thm:C nonlin_art} can be generalized to the case of more general tangency, but we do not do it here as it does not provide much insight beyond that in the proof of Theorem~\ref{thm:C nonlin_art}.   

\section{Experiments}\label{sec:exp}
In this section, we present simulated experiments to illustrate our theory. We consider a circular scanning geometry where $\gamma$ is the circle $y(s) = r(\cos s, \sin s, 0)$ with radius $r = 50$. The reconstruction domain is $X = B_0(45)$, i.e., the ball centered at zero with radius 45, and we sample $\vy = (s, \Theta)$ for $s \in [0,2\pi]$ and for rays with direction $\Theta$ which intersect the detector plane $P_s = \{\vx \cdot y(s) = -r\}$ as is common in cone-beam CT \cite{sidky2008image}. The detector plane has size $[-r,r]^2$. Let $(p_1,p_2) \in [-r,r]^2$ be coordinates on the detector plane. Then, the data, $\mathcal{R}\mu(s,p_1,p_2)$, is sampled on the cube $[0,2\pi]\times[-r,r]^2$.
\begin{figure}[!h]
\centering
\begin{subfigure}{0.27\textwidth}
\includegraphics[width=0.9\linewidth, height=3.5cm, keepaspectratio]{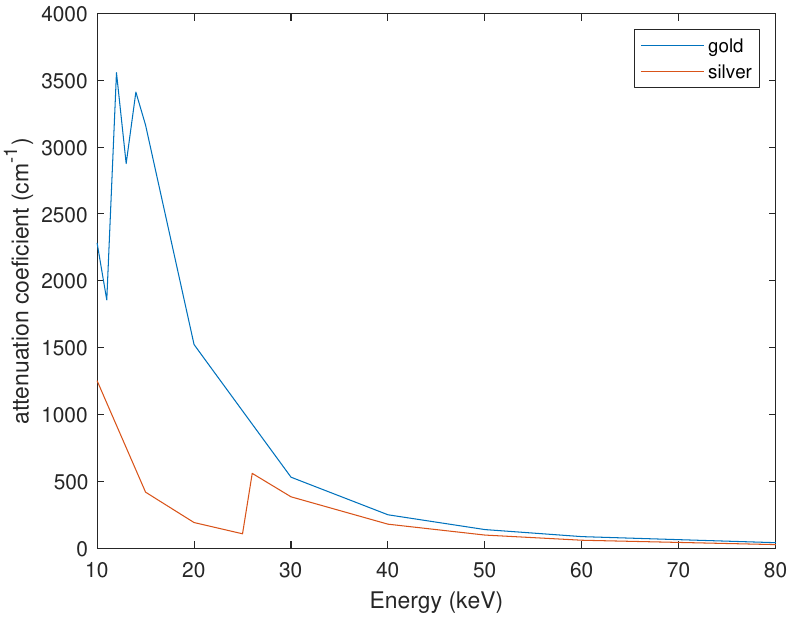}
\end{subfigure}
\begin{subfigure}{0.27\textwidth}
\includegraphics[width=0.9\linewidth, height=3.5cm, keepaspectratio]{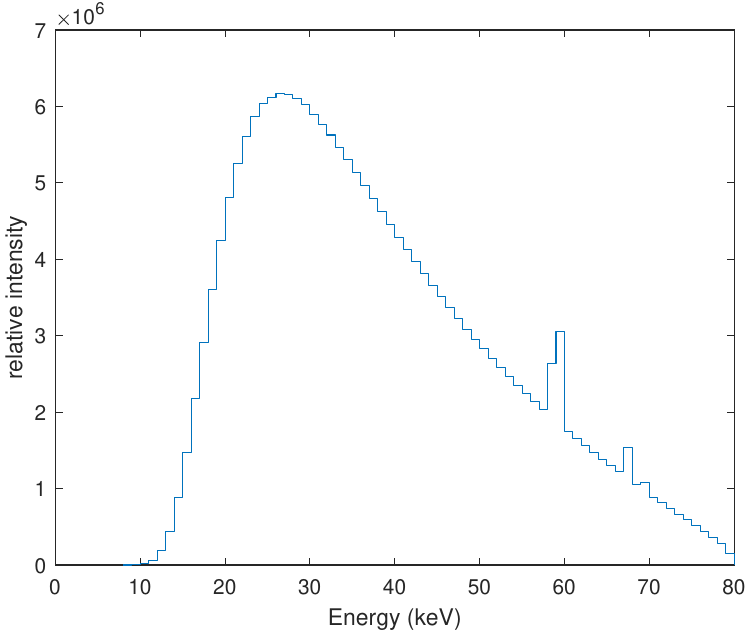}
\end{subfigure}
\caption{Attenuation coefficients of Gold and Silver (left), and an 80kVp spectrum (right). In this case, $E_{\text{mx}} = 80$keV.}
\label{F1}
\end{figure}

To simulate data, we first discretize $R$ and store it as a sparse matrix. To calculate the ray-pixel intersections, we use the ray-tracing code supplied in \cite{jacobs1998fast}.  We set $\mu(\vx,E) = \mu_g(E)\chi_{B_1}(\vx) + \mu_s(E)\chi_{B_2}(\vx)$ and $f = \rho_g\chi_{B_1}(\vx) + \rho_s\chi_{B_2}(\vx)$, where $\mu_g, \mu_s$ and $\rho_g,\rho_s$ are the attenuation coefficients and densities of gold and silver, respectively. While $f$ is not needed to generate the non-linear data, in the below experiments we compare reconstructions using linear data and non-linear data, and $f$ is used to simulate the linear data via $Rf$. The attenuation coefficients and density values were obtained from the NIST database \cite{hubbell2004tables}. 
\begin{figure}[!h]
\centering
\includegraphics[width=0.9\linewidth, height=6cm, keepaspectratio]{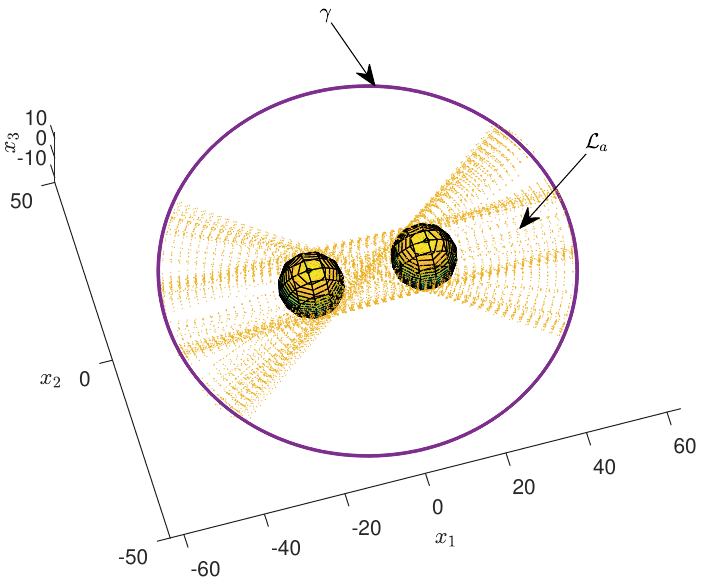}
\caption{Illustration of scanning geometry. $\gamma$ is the purple ring and $B_1$ and $B_2$ are the meshed balls. The dashed orange surface is $\mathcal{L}_a$ which is formed of double tangent rays to $B_1$ and $B_2$ which also pass through $\gamma$.}
\label{F1_1}
\end{figure}
To simulate $P(E)$, we use the ``SpekCalc" software \cite{poludniowski2009spekcalc}, using an 80kVp spectrum with a Tungsten target. For the purposes of this simulated example, the detector efficiency is 100\%. See Figure \ref{F1} for plots of $\mu_g,\mu_s$ and $P(E)$. The balls $B_1$ and $B_2$ have centers $(14,0,0)$ and $(-14,0,0)$, respectively, and both have radius 8. See Figure \ref{F1_1} for an illustration of our scanning geometry. The double tangent artifact surface, $\CL_a$, is also highlighted. To simulate $\mathcal{R}\mu$, we use \eqref{model_data}.

\begin{figure}[!h]
\centering
\begin{subfigure}{0.24\textwidth}
\includegraphics[width=0.9\linewidth, height=3.2cm, keepaspectratio]{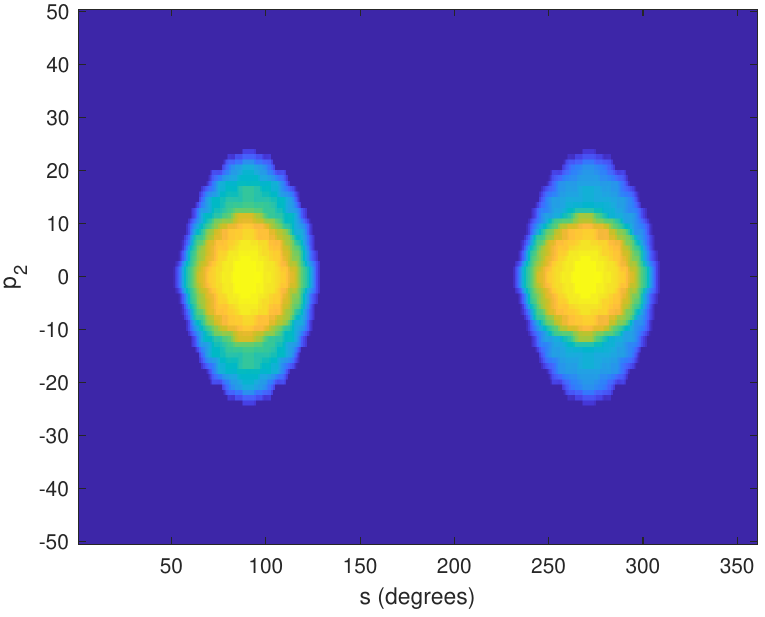}
\end{subfigure}
\begin{subfigure}{0.24\textwidth}
\includegraphics[width=0.9\linewidth, height=3.2cm, keepaspectratio]{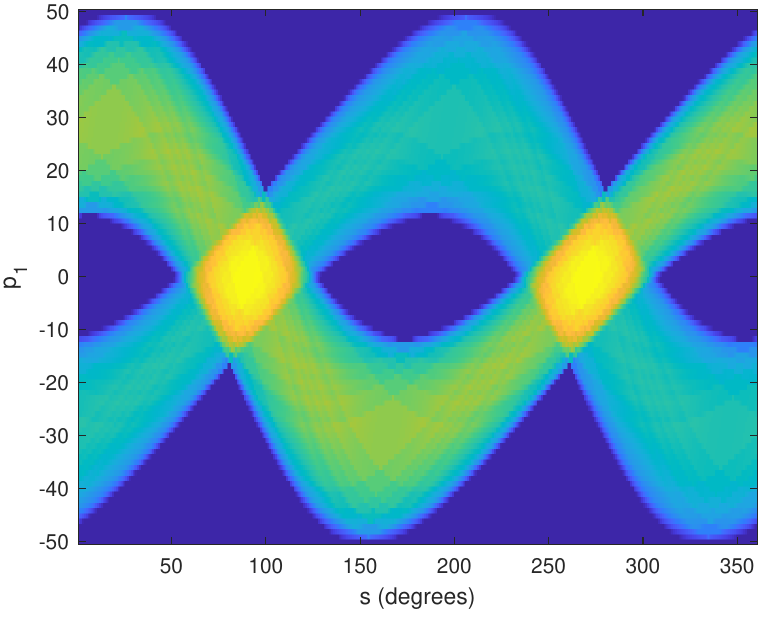}
\end{subfigure}
\begin{subfigure}{0.24\textwidth}
\includegraphics[width=0.9\linewidth, height=3.2cm, keepaspectratio]{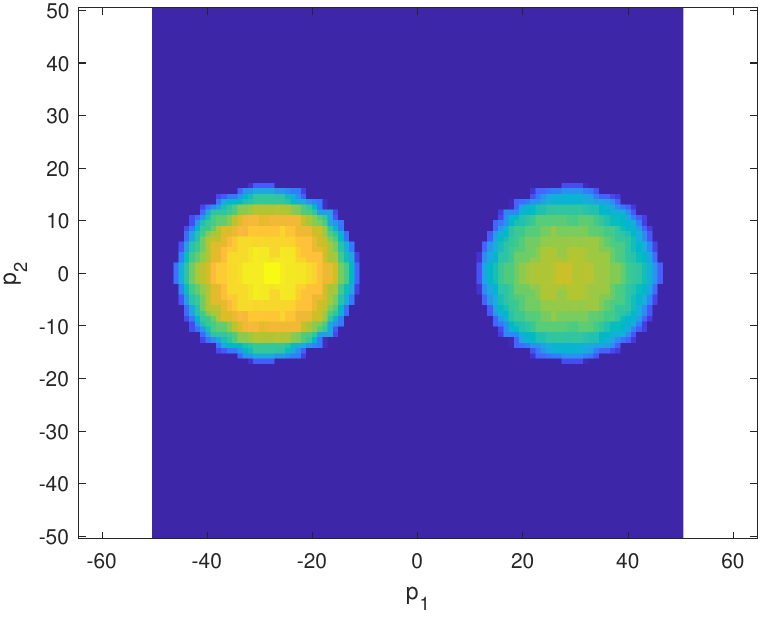}
\end{subfigure}
\\
\begin{subfigure}{0.24\textwidth}
\includegraphics[width=0.9\linewidth, height=3.2cm, keepaspectratio]{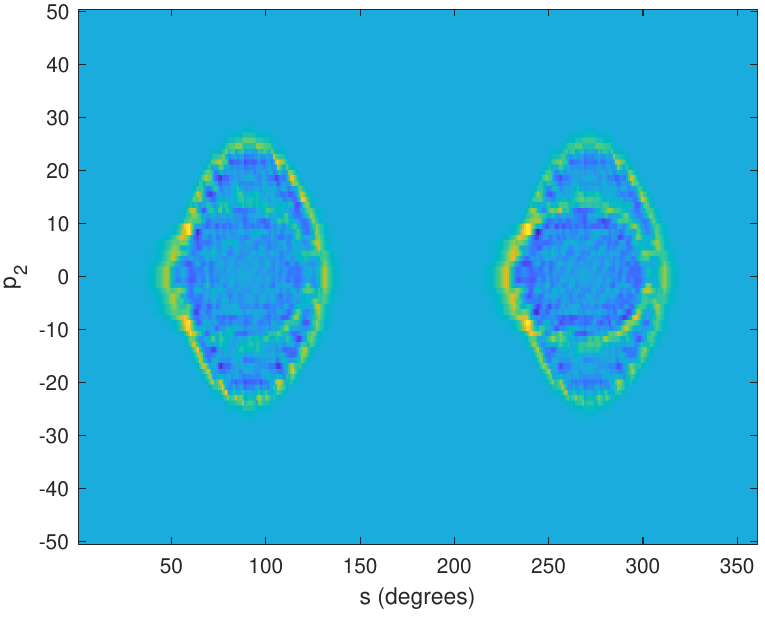}
\subcaption*{$\{p_1 = 0\}$}
\end{subfigure}
\begin{subfigure}{0.24\textwidth}
\includegraphics[width=0.9\linewidth, height=3.2cm, keepaspectratio]{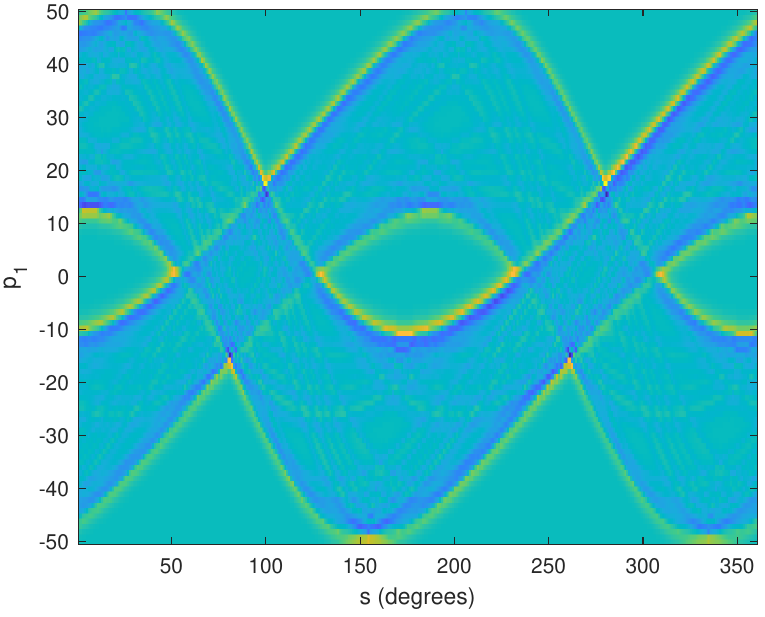}
\subcaption*{$\{p_2 = 0\}$}
\end{subfigure}
\begin{subfigure}{0.24\textwidth}
\includegraphics[width=0.9\linewidth, height=3.2cm, keepaspectratio]{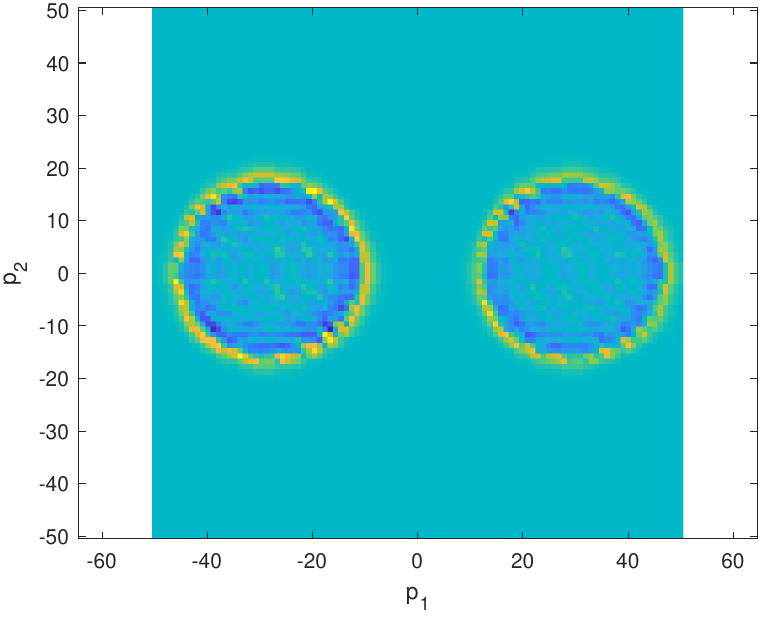}
\subcaption*{$\{s = \pi\}$}
\end{subfigure}
\caption{Simulated data $\mathcal{R}\mu(s,p_1,p_2)$ on three central slices. The top row is the original data $\mathcal{R}\mu$, and the bottom row shows the filtered data $\mathcal{P} \mathcal{R}\mu$.}
\label{F2}
\end{figure}

For {FBP} reconstruction, we apply $R^*\mathcal{P}$ to the data, where $\mathcal{P} = -\Delta_{P_s}$ is the negative Laplacian applied in Cartesian coordinates on the detector plane. Example simulated data is shown in figure \ref{F2}. We show the original data and the data filtered by $\mathcal{P}$. We also consider the Landweber method \cite{hansen2018,landweber1951iteration}, which has been shown in previous publications to highlight artifacts predicted by microlocal theory \cite{webber2023ellipsoidal}.
\begin{figure}[!h]
\centering
\begin{subfigure}{0.24\textwidth}
\includegraphics[width=0.9\linewidth, height=3.2cm, keepaspectratio]{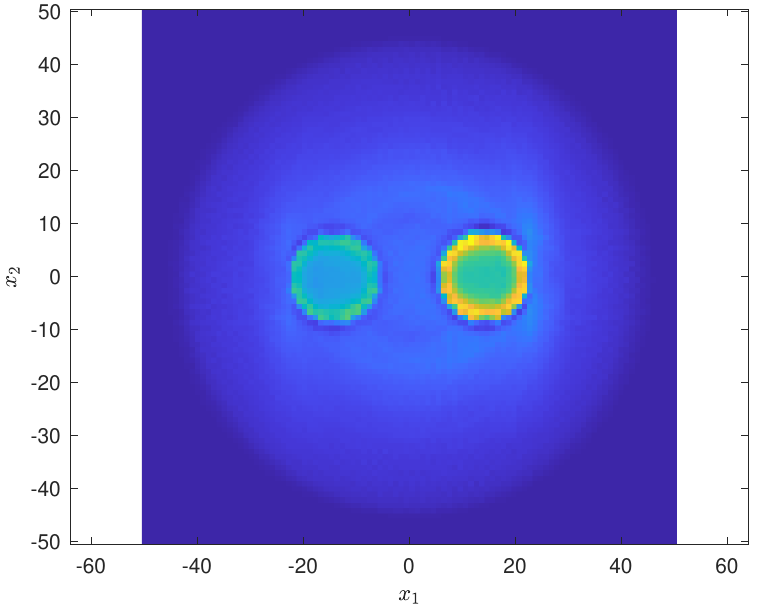}
\end{subfigure}
\begin{subfigure}{0.24\textwidth}
\includegraphics[width=0.9\linewidth, height=3.2cm, keepaspectratio]{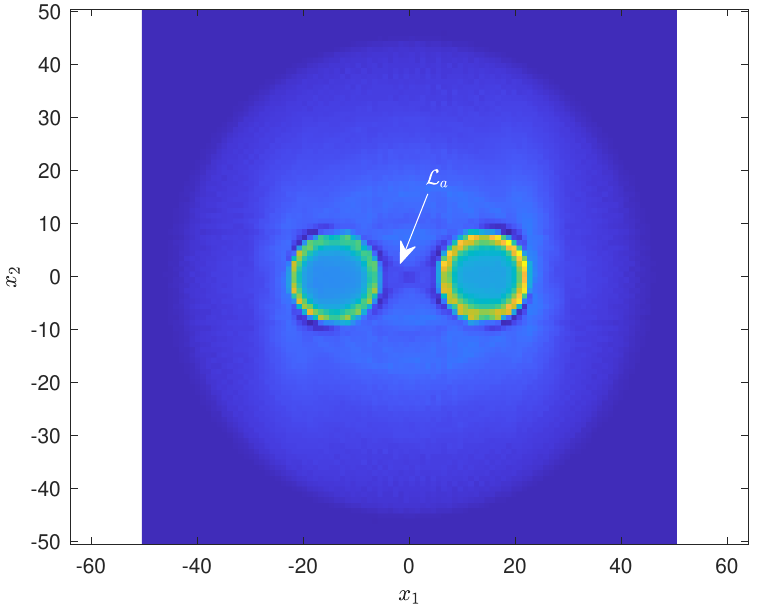}
\end{subfigure}
\begin{subfigure}{0.24\textwidth}
\includegraphics[width=0.9\linewidth, height=3.2cm, keepaspectratio]{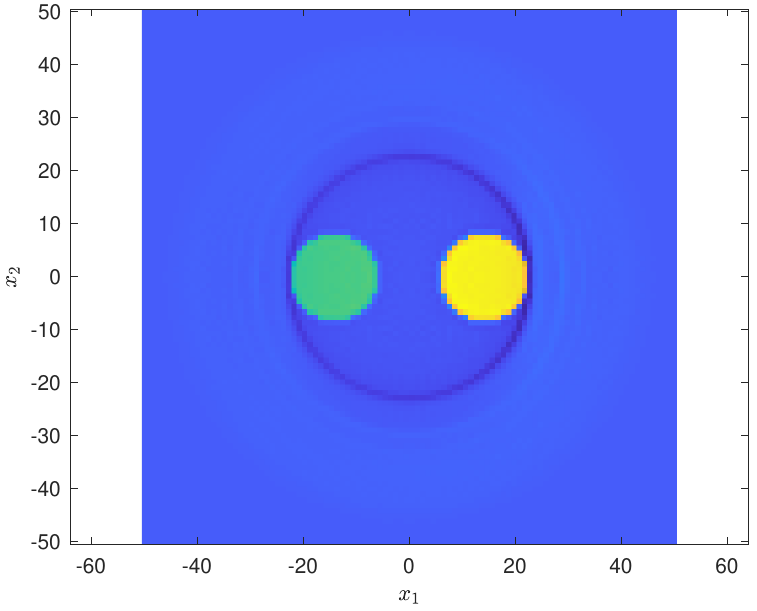}
\end{subfigure}
\begin{subfigure}{0.24\textwidth}
\includegraphics[width=0.9\linewidth, height=3.2cm, keepaspectratio]{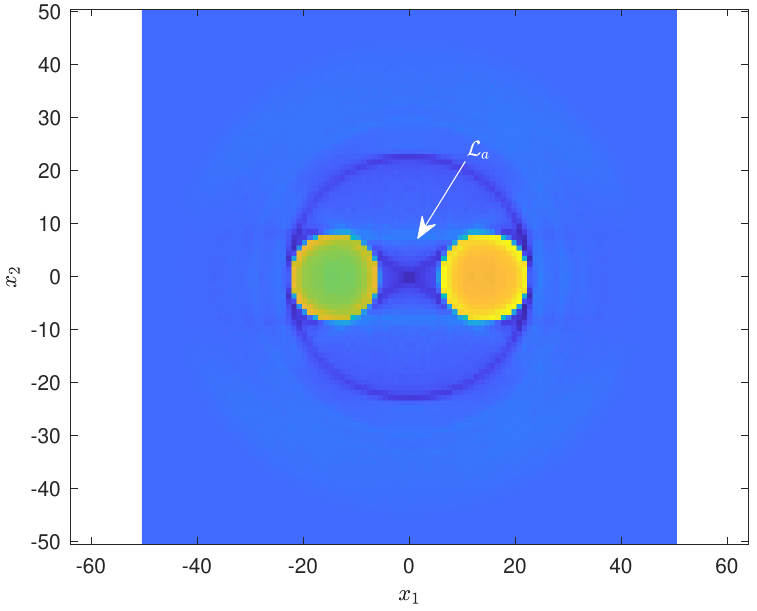}
\end{subfigure}
\\
\begin{subfigure}{0.24\textwidth}
\includegraphics[width=0.9\linewidth, height=3.2cm, keepaspectratio]{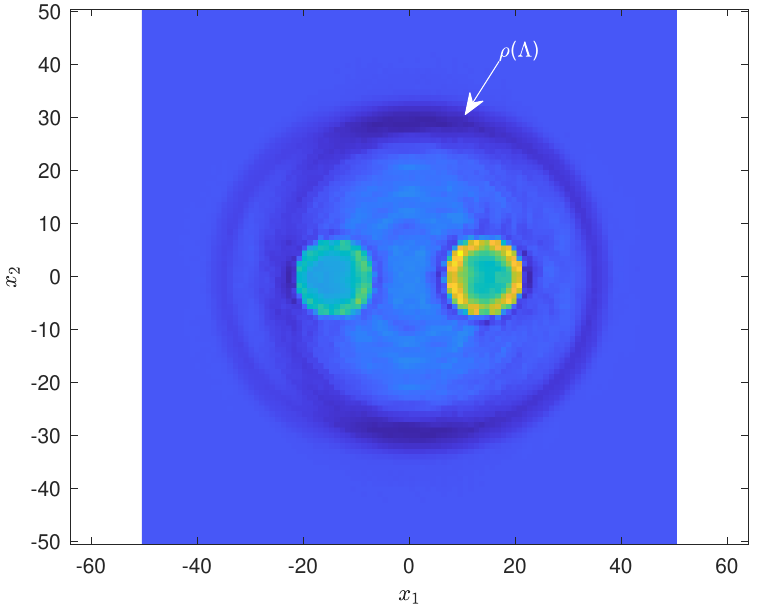}
\subcaption*{$R^*\mathcal{P}Rf$}
\end{subfigure}
\begin{subfigure}{0.24\textwidth}
\includegraphics[width=0.9\linewidth, height=3.2cm, keepaspectratio]{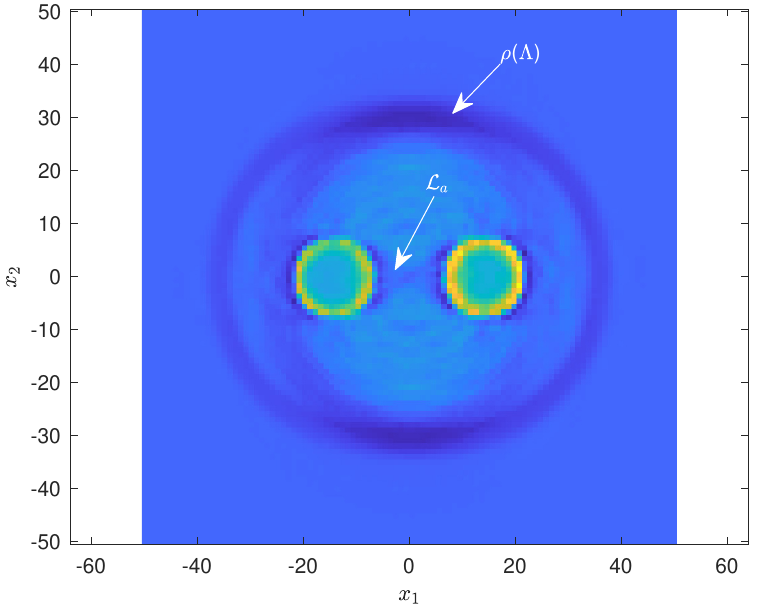}
\subcaption*{$R^*\mathcal{P}\mathcal{R}\mu$}
\end{subfigure}
\begin{subfigure}{0.24\textwidth}
\includegraphics[width=0.9\linewidth, height=3.2cm, keepaspectratio]{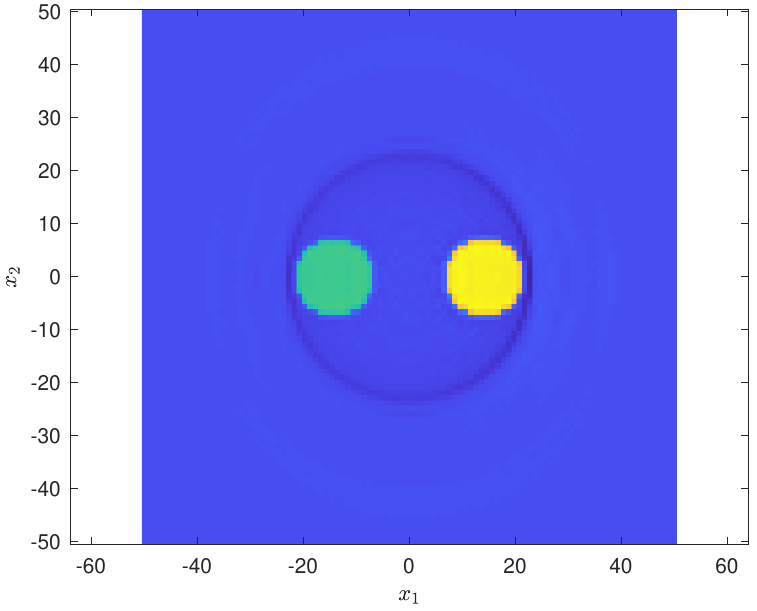}
\subcaption*{Landweber (linear)}
\end{subfigure}
\begin{subfigure}{0.24\textwidth}
\includegraphics[width=0.9\linewidth, height=3.2cm, keepaspectratio]{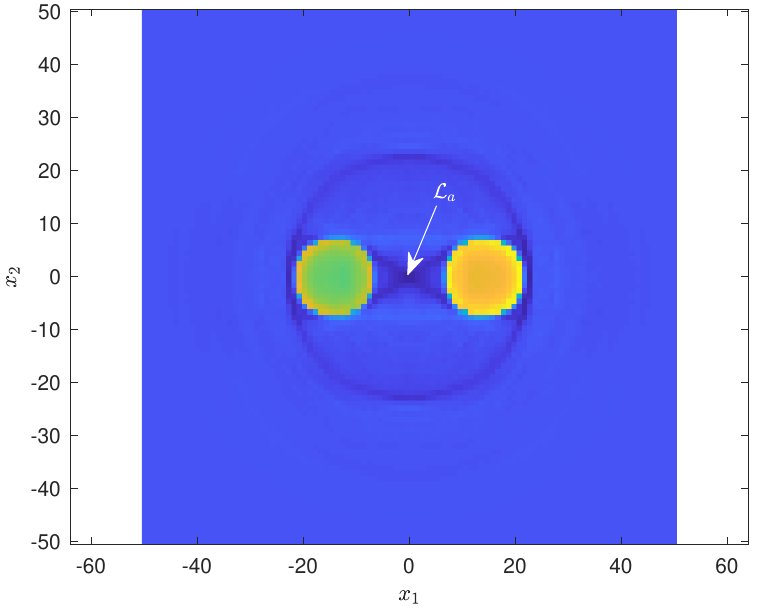}
\subcaption*{Landweber (non-linear)}
\end{subfigure}
\caption{Reconstructions of gold and silver balls showing the artifacts induced by $\Lambda$ and $N^*\CL_a$. The top and bottom rows show different slices at $\{x_3 = 0\}$ and $\{x_3 = 4\}$, respectively. The linear and non-linear reconstructions are shown side-by-side for comparison. We show reconstructions using the FBP (first two columns) and the Landweber method (last two columns). The artifacts due to $\Lambda$ are visible on the bottom row. The artifacts on $\mathcal{L}_a$ appear as a dark cross and two light parallel streaks tangent to both balls. Here, $\rho : T^*X \to X$ denotes the natural projection.}
\label{F3}
\end{figure}

In figure \ref{F3}, we present our image reconstructions on slices $\{x_3 = 0\}$ and $\{x_3 = 4\}$. We show reconstructions using linear data (i.e., $Rf$) and non-linear data ($\mathcal{R}\mu$) side-by-side, for comparison. In the non-linear FBP reconstructions, we see artifacts along streaks inbetween $B_1$ and $B_2$, which are not visible in the linear reconstructions. These are cross-sections of the dashed orange surface in Figure \ref{F1_1}. The artifacts are fainter than those in $\WF(f)$, which is as expected, since the non-linear artifacts are one degree smoother on Sobolev scale. In the $\{x_3 = 4\}$ plane slices, we can also see the artifacts induced by $\Lambda$ (i.e., the artifacts due to Bolker), which are present in the linear and non-linear reconstructions, as expected. In this example, the natural projection of $\Lambda$ onto $X$ is the union of two wide open cones tangent to $B_k$ and the source trajectory, $k=1,2$. The dark rings in the FBP $\{x_3 = 4\}$ slice reconstructions are cross-sections of these cones (elllipses). The artifacts induced by $\Lambda$ are notably stronger than those on $\CL_a$, which is in line with our predictions. 

In the Landweber reconstructions, the non-linear artifacts are more clearly visible, and the Landweber method appears to suppress the artifacts due the Bolker condition violations as they are not apparent in the reconstructions. We notice a ring artifact in the Landweber reconstructions, which looks tangent to both balls. As this appears in both the linear and non-linear reconstruction, it is clearly not related to non-linearities and is likely an artifact related to the Landweber iteration. Such artifacts are not the purpose of this paper and we provide the Landweber reconstructions to show how non-linear artifacts affect iterative reconstruction.

\section*{Acknowledgments}
JWW wishes to acknowledge funding support from The Cleveland Clinic Foundation, The Honorable Tina Brozman Foundation, the V Foundation, and the National Cancer Institute R03CA283252-01.

\bibliographystyle{abbrv}
\bibliography{refs, My_Collection, RefRevolution}

\end{document}